\documentclass[11pt,reqno]{amsart}
\usepackage{amscd,amsmath,amsopn,amssymb,amsthm,multicol}
\usepackage[backref=page, breaklinks=true,colorlinks=true,linkcolor=blue,citecolor=blue,urlcolor=blue]{hyperref}

\usepackage{enumerate,stmaryrd}

\def\ip{\langle \cdot\,,\cdot\rangle}

\DeclareMathOperator{\ad}{ad}
\DeclareMathOperator{\Id}{Id}

\DeclareMathOperator{\spann}{span}

\DeclareMathOperator{\Der}{Der}
\DeclareMathOperator{\Cl}{Cl}

\renewenvironment{proof}[1][Proof]{\textbf{#1.} }
{\ \rule{0.5em}{0.5em}}
\newtheorem{theorem}{Theorem}
\newtheorem{prop}{Proposition}
\newtheorem{lemma}{Lemma}

\theoremstyle{definition}
\newtheorem{definition}{Definition}
\newtheorem{remark}{Remark}

\begin{document}

\title
[Geodesic orbit pseudo-Riemannian nilmanifolds] {Geodesic orbit pseudo-Riemannian $H$-type nilmanifolds}

\author[K.~Furutani, I.~Markina, Yu.G.~Nikonorov]{Kenro Furutani, Irina Markina, and Yurii Nikonorov}

\address{K.~Furutani. Osaka Central Advanced Mathematical Institute, Osaka Metropolitan University,
Sugimoto, Sumiyoshi-ku, Osaka 558-8585, Japan}
\email{kf46089@gmail.com}

\address{I.~Markina. Department of Mathematics, University of Bergen, P.O.~Box 7803,
Bergen N-5020, Norway}
\email{irina.markina@uib.no}

\address{Yu.G.~Nikonorov. Southern Mathematical Institute of VSC RAS,
53 Vatutina St., Vladikavkaz, 362025, Russia}
\email{nikonorov2006@mail.ru}

\begin{abstract}
A pseudo-Riemannian manifold is called geodesic orbit (GO) if every geodesic is an orbit of a one-parameter group of isometries. For Riemannian $H$-type groups, the GO property was completely characterized by A. Kaplan and C. Riehm in terms of the dimension of the centre and the structure of the underlying Clifford module. We solve the corresponding problem for pseudo $H$-type groups $N_{r,s}$, that is, 2-step nilpotent Lie groups whose Lie algebra $\mathfrak n_{r,s}=\mathfrak z\oplus\mathfrak v$ is built from an admissible module $\mathfrak v$ of the Clifford algebra $\mathrm{Cl}(\mathbb R^{r,s})$, endowed with the left-invariant metric whose restriction to the centre is indefinite ($s\ge 1$). We prove that $N_{r,s}$ is naturally reductive if and only if $(r,s)\in\{(0,1),(1,2)\}$, and then it is GO for every admissible module. We also prove that $N_{3,4}$ is GO, but not naturally reductive, exactly when $\mathfrak v$ is a minimal admissible module, and that in all remaining cases $N_{r,s}$ is not GO. Thus, in contrast with the Riemannian situation, where a seven-dimensional centre admits GO metrics on isotypic modules of dimensions 8, 16 and 24, the exceptional pseudo-Riemannian case is GO only for the smallest possible module. For $N_{3,4}(\mathfrak v_{\min})$ we explicitly construct the one-parameter isometry groups generating all geodesics, including the null ones, and describe the affine family of their generators.
\vspace{2mm}

\noindent
2020 Mathematical Subject Classification:
53C50, 53C30, 22E25.

\vspace{2mm} \noindent Key words and phrases: geodesic orbit Riemannian manifolds, geodesic orbit pseudo-Riemannian manifolds,  geodesic orbit spacec, $H$-type Lie groups
\end{abstract}

\maketitle

\tableofcontents

\today
\section{Introduction and main results}

A pseudo-Riemannian manifold $(M,g)$ is called a geodesic orbit manifold (GO manifold) if every geodesic of $M$ is an orbit of a one-parameter subgroup of the full isometry group of $(M,g)$. Such a manifold is necessarily homogeneous, $M=G/H$. The terminology goes back to O. Kowalski and L. Vanhecke, who initiated the systematic study of Riemannian GO manifolds~\cite{KV}. Symmetric spaces, weakly symmetric spaces, naturally reductive spaces and normal homogeneous spaces all belong to this class. For overviews of the Riemannian theory we refer to~\cite{Arv17,BerNik20, CNN2023,Du1,Nik2017}.

In the pseudo-Riemannian setting the GO property is considerably more subtle. A geodesic may be null, and for null geodesics the classical geodesic lemma must be modified: the generator of the one-parameter group is only required to reproduce the geodesic up to an affine reparametrization, see Dušek and Kowalski~\cite{DK2007}. This additional freedom, encoded in the constant $k$ in Proposition~\ref{pr.geodlem1} below, has no Riemannian counterpart. It is one reason why many Riemannian results do not carry over directly. Pseudo-Riemannian GO spaces, in particular Lorentzian ones and nilmanifolds, have recently attracted considerable attention; see  \cite{Bar, DBO,CNWZ2024,CWZ2022, DK2007, DK2007n, Du2009, NW23,WC22}. Nilmanifolds are a natural testing ground, since their full isometry group is explicitly known, see Proposition~\ref{pr.isgr.1}. The GO property then reduces to a linear-algebraic problem about skew-symmetric derivations.

Groups of Heisenberg type (shortly $H$-type groups), introduced by A. Kaplan~\cite{Kap80}, form one of the most important classes of 2-step nilpotent Lie groups. They appear in harmonic analysis, in sub-Riemannian geometry and in the construction of Damek–Ricci harmonic spaces. Their Lie algebras $\mathfrak n=\mathfrak z\oplus\mathfrak v$ are determined by Clifford modules: the map $Z\mapsto J_Z$, dual to the adjoint representation, makes $\mathfrak v$ a module over the Clifford algebra $\mathrm{Cl}(\mathfrak z)$. Allowing the scalar product on $\mathfrak z$ to be indefinite leads to pseudo $H$-type groups $N_{r,s}$~\cite{Cia}. Here $\mathfrak z\cong\mathbb R^{r,s}$, and $\mathfrak v$ is an admissible module of $\mathrm{Cl}(\mathbb R^{r,s})$ carrying a neutral scalar product. Their classification up to isomorphism and isometry, and their automorphism groups, were obtained in~\cite{FuMa17,FuMa19,FuMa21}. Compared with the Riemannian case the algebraic structure is much richer.

For Riemannian $H$-type groups the GO problem was solved by Kaplan~\cite{Kap83} and Riehm~\cite{Rie84}.

\begin{theorem}[\cite{Kap83,Rie84}]\label{th.class.riemmannain}
Let $N$ be an $H$-type group supplied with a left invariant Riemannian metric
with the $H$-type algebra $\mathfrak{n}=\mathfrak{z}\oplus \mathfrak{v}$, where $\mathfrak z$ is the $m$-dimensional centre and $\mathfrak v$ is $n$-dimensional orthogonal complement.
Then $N$ is geodesic orbit if and only if one of the following three conditions holds:

{\rm 1)} $m = 1, 2, 3$ and $n$ is any possible;

{\rm 2)} $m = 5, 6$ and $n = 8$;

{\rm 3)} $m = 7$, $n =8, 16, 24$ and $\mathfrak{v}$ is an isotypic Clifford module {\rm(}in this case it is equivalent to the following property:
if $Z_1,Z_2,\dots,Z_7$ is an orthonormal basis of $\mathfrak{z}$,
the linear transformation $X \mapsto J_{Z_1}J_{Z_2}\cdots J_{Z_7}(X)$ of $\mathfrak{v}$ is either $\Id$ or $-\Id${\rm)}.

Moreover, $N$ is naturally reductive if and only if $m=1$ or $m=3$.
\end{theorem}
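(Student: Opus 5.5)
The plan is to reduce the geodesic orbit property to linear algebra on $\mathfrak n$. For a nilmanifold $N$ with left-invariant metric, the full isometry group is $N\rtimes H$, where $H$ consists of the orthogonal automorphisms of $\mathfrak n$ (Wilson's theorem; the paper quotes it later as Proposition~\ref{pr.isgr.1}). Combined with the geodesic lemma, this gives the following criterion. $N$ is GO if and only if for every $Z\in\mathfrak z$ and $X\in\mathfrak v$ there is a skew-symmetric derivation $D\in\mathfrak h=\Der(\mathfrak n)\cap\mathfrak{so}(\mathfrak n)$ with
\[
D|_{\mathfrak z}Z=0,\qquad D X = J_Z X .
\]
The second equation amounts to $[D+\ad(X+Z),\,\cdot\,]$ being orthogonal to $\mathfrak n$ at $X+Z$. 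For $H$-type algebras, $\mathfrak h$ is computed from the Clifford module structure. Its elements are pairs $(A,B)$ with $A\in\mathfrak{so}(\mathfrak z)$ and $B\in\mathfrak{so}(\mathfrak v)$ satisfying $[B,J_W]=J_{AW}$ for all $W$. The projection $\mathfrak h\to\mathfrak{so}(\mathfrak z)$ is onto exactly when $\mathfrak{spin}(m)$ lifts, since the operators $\tfrac12 J_{W_1}J_{W_2}$ realize $\mathfrak{so}(\mathfrak z)$. Its kernel is the centralizer $\mathfrak k$ of the Clifford action inside $\mathfrak{so}(\mathfrak v)$.

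The GO condition is therefore that, for each fixed unit $Z$, the orbit of $X$ under the algebra $\{B : (A,B)\in\mathfrak h,\ AZ=0\}$ contains $J_ZX$. Using transitivity of $\mathrm{Spin}(m)$ on the unit sphere of $\mathfrak z$, it suffices to fix $Z=Z_1$. Writing $\mathfrak h_Z=\mathfrak{spin}(m-1)\oplus\mathfrak k$, the requirement becomes
\[
J_{Z_1}X\in \mathfrak{spin}(m-1)\cdot X+\mathfrak k\cdot X\quad\text{for all } X\in\mathfrak v .
\]

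For sufficiency, I would treat the cases as follows.

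For $m=1,3$, the map $J_{Z_1}$ itself, respectively $J_{Z_1}$ composed with a quaternionic structure, lies in $\mathfrak k$ or in $\mathfrak h_Z$. This gives natural reductivity, since $D$ can be chosen independent of $X$.

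For $m=2$, $\mathfrak k$ contains a quaternionic-type action. Here I would use the $\mathfrak{u}$-type centralizer to move $X$ to $J_{Z_1}X$.

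For $m=5,6$ with $n=8$, and $m=7$ with the isotypic modules, the argument rests on explicit transitivity of $\mathrm{Spin}(m-1)\cdot K$ on spheres. Examples are $\mathrm{Spin}(7)$ acting on $S^7$ via the spin representation, and $\mathrm{Spin}(6)\cong SU(4)$ acting on $\mathbb C^4$. Transitivity implies that the tangent space of the orbit at $X$ is all of $X^\perp$, and $J_{Z_1}X\perp X$.

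For necessity, I would count dimensions. The orbit of $X$ must have tangent space containing $J_{Z}X$ for all $Z$ simultaneously, up to varying $D$. So the map $(Z,D)\mapsto DX-J_ZX$ must hit zero for each $Z$, and generic $X$ imposes $n$ conditions. Comparing $\dim\mathfrak{spin}(m-1)+\dim\mathfrak k$ with the orbit dimension needed excludes large $n$, using the known $\mathfrak k$ ($\mathfrak{so}$, $\mathfrak u$ or $\mathfrak{sp}$ of the multiplicity). For $m=7$ with a non-isotypic module, the product $J_{Z_1}\cdots J_{Z_7}$ has both eigenvalues. Any $D$ commutes with this product, since it is central up to sign, and hence preserves its eigenspaces. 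One then picks $X=X_++X_-$ and shows that the resulting constraints on the two components are incompatible.

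For natural reductivity, I would require $D$ independent of $X$ (Gordon's criterion). This forces $J_Z\in\mathfrak h$ modulo the allowed terms, which holds only for $m=1,3$.

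I expect the main obstacle to be the negative cases $m=5,6$ with $n\ge16$, and $m\ge8$. There, transitivity arguments fail and one must exhibit a specific $X$ for which no $D$ works. I would try $X$ in the sum of two irreducible summands and show that $\mathfrak h_Z$, being block-diagonal modulo $\mathfrak k$, cannot produce the required cross terms.
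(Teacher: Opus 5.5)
The paper gives no proof of this theorem; it is quoted from Kaplan and Riehm, so your sketch has to stand on its own. Your reduction is sound. The $\mathfrak v$-parts of $\mathfrak h$ form $\mathfrak{spin}(m)\oplus\mathfrak k$; the projection to $\mathfrak{so}(\mathfrak z)$ is in fact always onto, via the operators $\tfrac12 J_{Z_i}J_{Z_j}$. GO then amounts to $J_{Z_1}X\in(\mathfrak{spin}(m-1)\oplus\mathfrak k)X$ for all $X$. The case analysis built on this, however, has genuine gaps.

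\emph{Sufficiency.} Transitivity of $\mathrm{Spin}(m-1)\cdot K$ on the unit sphere fails in exactly the delicate cases.

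For $m=5$, $n=8$, the symmetric involution $P=J_{Z_2}J_{Z_3}J_{Z_4}J_{Z_5}$ commutes with $\mathfrak{spin}(4)$ and with $\mathfrak k=\mathbb R\,J_{Z_1}\cdots J_{Z_5}$. So every orbit lies in a product $S^3(a)\times S^3(b)$ of spheres in the two eigenspaces of $P$. GO holds for a different reason: $J_{Z_1}$ also commutes with $P$, and $\mathrm{Spin}(4)=\mathrm{Sp}(1)\times\mathrm{Sp}(1)$ is transitive on these products.

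For $m=7$, $n=24$ (isotypic), $\dim\mathfrak h_{Z_1}=15+3<23$, so transitivity is impossible. The real mechanism is that $J_{Z_1}$ is a complex structure commuting with $\mathfrak{spin}(6)$. Hence $\mathfrak v=\mathbb C^4\otimes_{\mathbb R}\mathbb R^p$, with $\mathfrak{spin}(6)=\mathfrak{su}(4)$ and $J_{Z_1}=i$. For $X=\sum_a u_a\otimes f_a$, multiplication by $i$ on $W=\spann_{\mathbb C}\{u_a\}$ extends to a traceless skew-Hermitian operator as soon as $\dim_{\mathbb C}W\le 3$. This is what singles out $n\le 24$. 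Among $m=5,6,7$, transitivity holds only for $n=8$ and $m=6,7$, via $\mathrm{Spin}(5)=\mathrm{Sp}(2)$ on $\mathbb H^2$ and $\mathrm{Spin}(6)=SU(4)$ on $\mathbb C^4$; $\mathrm{Spin}(7)$ plays no role.

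Your $m=3$ step uses $J_{Z_1}=-\epsilon J_{Z_2}J_{Z_3}$, which requires $J_{Z_1}J_{Z_2}J_{Z_3}=\epsilon\Id$. Take $\mathfrak v=\mathfrak v_+^p\oplus\mathfrak v_-^q$ with $p,q\ge 1$. Then $[J_{Z_1},J_{Z_2}]$ equals $-2J_{Z_3}$ on $\mathfrak v_+$ and $2J_{Z_3}$ on $\mathfrak v_-$, so $J(\mathfrak z)$ is not a subalgebra. By the very criterion you invoke (Theorem~\ref{th:Ovanto}), $N$ is \emph{not} naturally reductive. The natural-reductivity clause is therefore provable only with ``$\mathfrak v$ isotypic'' added for $m=3$. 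GO still holds in that case, but it needs an $X$-dependent $D$ from $\mathfrak k=\mathfrak{sp}(p)\oplus\mathfrak{sp}(q)$, which is transitive on products of spheres in $\mathfrak v_\pm$.

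\emph{Necessity.} No dimension count can settle the question. What is needed is that the single vector $J_{Z_1}X$ lie in $\mathfrak h_{Z_1}X$, not that $\mathfrak h_{Z_1}X=X^\perp$. Thus $m=7$, $n=24$ is GO although $\dim\mathfrak h_{Z_1}<n-1$. Conversely, $m=4$, $n=8p$ with $p\ge3$ has $\dim\mathfrak h_{Z_1}=3+p(2p+1)\ge n-1$ and is never GO. Note that $m=4$ is never treated in your plan.

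The real obstructions are structural:
\begin{itemize}
\item For $m\equiv 0\pmod 4$, the symmetric involution $Q=J_{Z_2}\cdots J_{Z_m}$ commutes with $\mathfrak h_{Z_1}$ but anticommutes with $J_{Z_1}$. So every $X\ne 0$ in an eigenspace of $Q$ fails.
\item For $m=7$ isotypic with $p\ge 4$, take $u_1,\dots,u_4$ a unitary basis of $\mathbb C^4$ and the other $u_a=0$. The $\mathfrak{su}(4)$-component would then need trace $4i$, whatever the $\mathfrak{so}(p)$-component is.
\item For $m=6$, $n=16$, one has $\mathfrak h_{Z_1}=\mathfrak{sp}(2)\oplus\mathfrak{so}(2)$, and $J_{Z_1}$ is right multiplication by a unit imaginary quaternion $\mathbf i$ on $\mathbb H^2$. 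The vector $X=x\otimes f_1+x\mathbf j\otimes f_2$ with $x\ne 0$ forces $x\mathbf k=-x\mathbf k$.
\end{itemize}
Proposition~\ref{co_totg_2} then propagates such counterexamples to every module containing them.

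Your ``cross terms'' heuristic points the wrong way, because $J_{Z_1}$ preserves every Clifford submodule. The one genuine obstruction you do sketch is the $m=7$ non-isotypic case: the sign flip of $J_{Z_1}$ relative to $\mathfrak{spin}(6)$ between $\mathfrak v_+$ and $\mathfrak v_-$, as in the proof of Theorem~\ref{th:34+-}. The cases $m=5$ with $n\ge16$, and $m\ge 9$ with $m\not\equiv 0\pmod 4$, receive no argument at all.
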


In~\cite{FuMaNik25,MNF26} we began the study of the pseudo-Riemannian case under the assumption that $\mathfrak v$ is a minimal admissible module. In the present paper we remove this assumption and obtain a complete answer for arbitrary admissible modules.

\begin{theorem}\label{th.class.pseudo}
Let $N_{r,s}$ be a pseudo $H$-type group supplied with the left invariant pseudo-Riemannian metric, where $(r,s)$, $s\geq 1$, is the signature of restriction of the metric to the centre. Let $\mathfrak{n}_{r,s}=\mathfrak{z}\oplus \mathfrak{v}$ be the Lie algebra of $N_{r,s}$, where $\mathfrak z$ is the centre and $\mathfrak v$ is an admissible module for the Clifford algebra $\Cl(\mathbb R^{r,s})$.
Then the following four assertions hold:

{\rm 1)} $N_{r,s}$ is naturally reductive {\rm(}hence, geodesic orbit{\rm)} if and only if $(r,s)\in\{(0,1), (1,2)\}$;
% and~$\mathfrak{v}$ is any admissible module.

{\rm 2)} $N_{3,4}$ with $\mathfrak v$ being a minimal dimensional admissible Clifford module is geodesic orbit but not naturally reductive;

{\rm 3)} $N_{3,4}$ with $\mathfrak v$ being not minimal dimensional admissible Clifford module is not geodesic orbit;

{\rm 4)} $N_{r,s}$ with $(r,s)\notin \{(0,1), (1,2), (3,4)\}$ is not geodesic orbit for any admissible Clifford module.
\end{theorem}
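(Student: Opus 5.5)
The plan is to reduce everything to linear algebra on $\mathfrak n_{r,s}=\mathfrak z\oplus\mathfrak v$ via the pseudo-Riemannian geodesic lemma (Proposition~\ref{pr.geodlem1}) and the description of the full isometry group of a nilmanifold (Proposition~\ref{pr.isgr.1}). By the latter, $\mathrm{Isom}(N_{r,s})=N_{r,s}\rtimes H$ with $H$ the group of skew-symmetric automorphisms, so $\mathfrak h$ consists of skew-symmetric derivations. The GO property then becomes a purely algebraic condition. For every $X+Z\in\mathfrak n$ there must exist $D\in\mathfrak h$ and a constant $k$, with $k=0$ for non-null vectors, such that
\[
[D,X]=J_ZX+kX,\qquad [D,Z]=kZ .
\]
Since $D$ is skew, the second equation forces $k\langle Z,Z\rangle=0$, so $k\neq 0$ can only occur for null $Z$. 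The skew derivations preserving $\mathfrak z$ and $\mathfrak v$ are pairs $(A,C)$ with $A\in\mathfrak{so}(r,s)$ acting on $\mathfrak z$ and $C\in\mathfrak{so}(\mathfrak v)$ satisfying $[C,J_Z]=J_{AZ}$. I would first compute this algebra as $\mathfrak{spin}(r,s)$, acting through $\Lambda^2\mathfrak z$ via $J_{Z_1}J_{Z_2}$, plus the centralizer $\mathfrak k$ of $\Cl(\mathbb R^{r,s})$ in $\mathfrak{so}(\mathfrak v)$, which is known from \cite{FuMa17,FuMa21}. The GO condition thus reads: for all $Z$, $X$ there exist $\omega\in\Lambda^2\mathfrak z$ and $K\in\mathfrak k$ with $\omega\cdot Z=kZ$ and $(\tfrac12\sum\omega_{ij}J_iJ_j+K)X=J_ZX+kX$.

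\textbf{Part 1.} For $(0,1)$ and $(1,2)$ I would show natural reductivity directly. In the Riemannian case $m=1,3$ the reason is that $J_Z$ itself lies in the image of $\mathfrak{spin}$ and $\mathfrak z$ is closed under the bracket $[J_{Z_1},J_{Z_2}]\propto J_{[Z_1,Z_2]}$. For $\mathbb R^{0,1}$ this is trivial: $J_Z$ commutes with everything and is skew, so $D=(0,J_Z)$ works. For $\mathbb R^{1,2}$, $\mathfrak z\cong\mathfrak{sl}(2,\mathbb R)\cong\mathfrak{so}(1,2)$ and $J_{Z_1}J_{Z_2}$ is proportional to $J_{Z_1\times Z_2}$. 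This yields an Ad-invariant form on $\mathfrak n\oplus\mathfrak h$, and the criterion applies with $D=(\ad Z, J_Z)$ for every module. For the converse, natural reductivity requires a linear choice $Z\mapsto D_Z$ with $D_Z|_{\mathfrak v}=J_Z+(\text{terms commuting})$ and $D_Z$ a derivation. This forces $[J_Z,J_W]$ to be a combination of $J_{\mathfrak z}$ and $\mathfrak k$, so $\mathfrak z$ must close into a Lie algebra with $\dim\mathfrak z\in\{1,3\}$. Among signatures with $s\ge1$, only $(0,1)$ and $(1,2)$ survive; $(2,1)$ and $(0,3)$ give $\Cl$ structures where $J_ZJ_W$ does not map back into $J_{\mathfrak z}$ in the needed way. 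This last point would be checked case by case.

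\textbf{Parts 4 and 3 (non-GO).} I would pick a well-chosen non-null pair $(X,Z)$, so that $k=0$ and $\omega$ must annihilate $Z$, that is, $\omega\in\Lambda^2(Z^\perp)$. The task is then to show that $J_ZX\notin(\mathfrak{spin}(Z^\perp)+\mathfrak k)X$. A dimension count is the first tool: $\dim(\mathfrak{spin}(Z^\perp)+\mathfrak k)X$ is bounded by $\binom{r+s-1}{2}+\dim\mathfrak k$ minus the stabilizer, and this must cover the relevant directions as $X$ varies. Comparing with $\dim\mathfrak v$ excludes large $r+s$. The low-dimensional leftovers, namely $r+s\le 7$ with $s\ge1$ apart from $(3,4)$, and also $(r,s)$ with $r+s=3,5,6$, need explicit counterexample vectors. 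These are built, as in \cite{FuMaNik25,MNF26}, from a null $Z$ and $X$ chosen so that $J_ZX$ is a null direction not reachable even with $k\ne0$. For $(3,4)$ with non-minimal $\mathfrak v$, take $\mathfrak v=\mathfrak v_{\min}\oplus\mathfrak v_{\min}'$ and $X=X_1+X_2$ in different summands. The $\mathfrak{spin}(Z^\perp)$ part acts diagonally, while $\mathfrak k$ is too small to decouple the components, mirroring the role of isotypy in Theorem~\ref{th.class.riemmannain}.

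\textbf{Part 2 and main obstacle.} For $N_{3,4}(\mathfrak v_{\min})$, where $\dim\mathfrak v=8$, I would use that $\mathfrak{spin}(3,4)$ acts on the 8-dimensional spinor space, with stabilizer of a non-null $Z$ equal to $G_2^*$ or $\mathfrak{spin}(3,3)$-type. This action is transitive on suitable level sets, which should give $\omega$ with $\omega Z=0$ and $\omega\cdot X=J_ZX$ for non-null $Z$ by the same mechanism as the Riemannian $m=7$, $n=8$ case. The hard step is null $Z$: there I expect to need $k\neq0$ and an explicit $\omega$ with $\omega Z=kZ$, constructed in a Witt basis. I would attempt this first for generic $X$ and then extend by closedness of the set of GO vectors (Proposition-type continuity argument). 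The failure of natural reductivity follows from Part 1.
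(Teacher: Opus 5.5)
Your outline for Parts 3 and 4 is missing the step the paper's argument depends on: passing from a small module to an arbitrary admissible one.

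\textbf{Heredity is missing.} The counterexamples of \cite{FuMaNik25} that you invoke live on $\mathfrak v_{\min}$ only. No dimension count over $\mathfrak{spin}(Z^\perp)+\mathfrak k$ can be uniform in the module, because for $\mathfrak v=\mathfrak v_{\min}^{\oplus l}$ the centralizer $\mathfrak k$ grows quadratically in $l$. For $(3,4)$ it is an $\mathfrak{so}(p,q)$ with $p+q=l$, against $\dim\mathfrak v=8l$. Even for $\mathfrak v_{\min}$, an upper bound on $\dim\mathfrak h_ZX$ (with $\mathfrak h_Z$ the stabilizer of $Z$ in $\mathfrak h$) does not contradict the requirement that the single vector $J_ZX$ lie in $\mathfrak h_ZX$. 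So the large-$(r,s)$ cases also rest on Theorem~\ref{th.class.pseudo-0}, not on counting.

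What you need is heredity: if $N_{r,s}(\mathfrak v')$ is not GO for an admissible submodule $\mathfrak v'\subset\mathfrak v$, then neither is $N_{r,s}(\mathfrak v)$. The paper gets this from total geodesy of $N_{r,s}(\mathfrak v')$ and Theorem~\ref{th_ps_totgeod} (Proposition~\ref{co_totg_2}). You could also prove it directly. The orthogonal projection $P$ onto $\mathfrak v'$ commutes with every $J_W$, so if $(C,A)$ and $k$ solve the system for $(Z,X)$ with $X\in\mathfrak v'$, then $(C,PA|_{\mathfrak v'})$ and $k$ solve it in $N_{r,s}(\mathfrak v')$. Part 4 then reduces to Theorem~\ref{th.class.pseudo-0}. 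Part 3 reduces to $\mathfrak v_{\min}\oplus\mathfrak v_{\min}$ and $\mathfrak v^-_{\min}\oplus\mathfrak v^+_{\min}$, one of which sits orthogonally in any non-minimal admissible module.

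\textbf{Your Part 3 mechanism fails as stated.} For $\mathfrak v_{\min}\oplus\mathfrak v_{\min}$ the algebra $\mathfrak k$ is one-dimensional (the $\beta$-block in~\eqref{eq:A1A2}). The same holds for the $16$-dimensional isotypic module with seven-dimensional centre, which is GO by Theorem~\ref{th.class.riemmannain}. So ``$\mathfrak k$ is too small to decouple the components'' proves nothing, and the isotypy analogy points the wrong way. The paper's obstruction uses a null central vector, which has no Riemannian counterpart: $z_0=Z_1+Z_4$, $x_0=v+J_{Z_7}v$, $y_0=-J_{Z_2}v+J_{Z_5}v$. The two block equations then force $a_{4,7}=1$ and $a_{4,7}=-1$ whatever $k$ and $\beta$ are. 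For the non-equivalent pair a non-null vector does suffice: there $\mathfrak k=0$ and $A$ acts with opposite signs on $v^\pm$, so $(Z_1,v^-\oplus v^+)$ fails.

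\textbf{Part 2.} The stabilizer of a positive $Z$ is $\mathfrak{spin}(2,4)\cong\mathfrak{su}(2,2)$, not $G_2^*$; $G_2^*$ stabilizes a non-null spinor. For negative $Z$ it is $\mathfrak{spin}(3,3)\cong\mathfrak{sl}(4,\mathbb R)$. With these groups your orbit idea does handle non-null $Z$. $J_Z$ commutes with $\mathfrak{spin}(Z^\perp)$ and is the complex structure of $\mathbb C^{2,2}$, resp.\ $\pm\Id$ on $\mathbb R^4\oplus(\mathbb R^4)^*$, and one checks directly that $J_ZX\in\mathfrak{spin}(Z^\perp)X$ for every $X$. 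That would be a cleaner alternative to the coordinates of Section~\ref{sec:final}. For null $Z$, however, the closedness argument is not available. Solvability in $(D,k)$ of a linear system whose coefficients depend on $(Z,X)$ is not a closed condition when the rank drops, and here it does drop: see the solution spaces of dimension $9$ and $11$ in the Remark after Theorem~\ref{th:34min}. The paper instead solves each normal form in~\eqref{all n}.

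\textbf{Part 1.} You assert $\dim\mathfrak z\in\{1,3\}$ without proof. It follows from Theorem~\ref{th:Ovanto}(i) and Proposition~\ref{pr.triple.1}(2): $[\mathbf V,\mathbf V]\subseteq\mathbf V$ forces $\binom{r+s}{2}\le r+s$, i.e.\ $r+s\le 3$.
\begin{itemize}
\item $r+s=2$ fails, since $J_{Z_1}J_{Z_2}$ anticommutes with $J_{Z_1}$ and $J_{Z_2}$ while no nonzero element of $\mathbf V$ does.
\item For $(2,1)$ and $(0,3)$ the central element $\omega=J_{Z_1}J_{Z_2}J_{Z_3}$ has $\omega^2=-\Id$. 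Hence $[J_{Z_1},J_{Z_2}]=2\omega J_{Z_3}\notin\mathbf V$, because an element $J_WJ_{Z_3}$ commuting with $J_{Z_1}$ and $J_{Z_2}$ is a scalar.
\end{itemize}
Completed this way, your converse is more self-contained than the paper's, which appeals to the non-GO property of these groups.

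\textbf{A caution on ``every module'' for $(1,2)$.} The relation $J_{Z_1}J_{Z_2}=\pm J_{Z_3}$ requires $\omega$ (now with $\omega^2=\Id$) to act as a scalar on $\mathfrak v$. Replacing $J_{Z_3}$ by $-J_{Z_3}$ gives an admissible module with the opposite value of $\omega$, and on the sum of the two, $\mathbf V$ is not a subalgebra. So the claim needs an isotypy hypothesis or a separate argument, in your sketch as well as in Section~\ref{sec.N12}.
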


Theorem~\ref{th.class.pseudo} shows a strong rigidity phenomenon. For instance, in the Riemannian case the GO property is stable under taking isotypic sums of modules for k-dimensional centre with $k\leq 3$ (Theorem~\ref{th.class.riemmannain}). In the pseudo-Riemannian case it holds for all modules only in the two naturally reductive signatures $(0,1)$ and $(1,2)$. The only further GO example is the 15-dimensional group $N_{3,4}(\mathfrak v_{\min})$. This group is the split analogue of the Riemannian case of a seven-dimensional centre, but unlike that case it loses the GO property as soon as $\mathfrak v$ is enlarged, whether by an equivalent or by a non-equivalent minimal module.

The proof combines three ingredients. First, an admissible submodule $\mathfrak v'\subset\mathfrak v$ gives a totally geodesic subgroup $N_{r,s}(\mathfrak v')\subset N_{r,s}(\mathfrak v)$. Since complete totally geodesic submanifolds of GO manifolds are GO (Theorem~\ref{th_ps_totgeod}), every non-GO result for a module passes to all modules containing it. Second, the naturally reductive cases are characterized via Ovando's criterion (Theorem~\ref{th:Ovanto}) together with the structure of the Lie algebra generated by $J(\mathfrak z)$ (Proposition~\ref{pr.triple.1}). Third, for $N_{3,4}$ we carry out an explicit analysis based on a distinguished basis of $\mathfrak v_{\min}$ adapted to the Clifford relations. On $\mathfrak v_{\min}$ this analysis yields all generators of homogeneous geodesics, including the null ones. On $\mathfrak v_{\min}\oplus\mathfrak v_{\min}$ and on $\mathfrak v^-_{\min}\oplus\mathfrak v^+_{\min}$ it yields explicit initial vectors whose geodesics are not homogeneous.

The paper is organized as follows. Section~\ref{se.auiliary} collects preliminaries on pseudo-Riemannian nilmanifolds and pseudo H-type algebras. Section~\ref{sec.example} treats $N_{0,1}$ and $N_{1,2}$ and proves the first statement of Theorem~\ref{th.class.pseudo}. Section~\ref{sec:rs-positive} proves the last statement using totally geodesic subgroups. Section~\ref{sec.N3.4} is devoted to $N_{3,4}$.

\section{Auxiliary results}\label{se.auiliary}

\begin{definition}\label{def:GO-pseudo}
A pseudo-Riemannian homogeneous reductive manifold $(G/H,g)$ is called  geodesic orbit (GO) if
a geodesic through the point $eH$ with any initial vector $\xi$ is an orbit of a one-parameter isometry group of $(G/H,g)$.
\end{definition}

\begin{definition}\label{def:naturaly-reductive}
Let $(M= G/H, g)$ be a homogeneous reductive pseudo-Riemannian
manifold.  Then $M$ is said to be a naturally reductive if there is a reductive decomposition $\mathfrak{g} = \mathfrak{h}\oplus \mathfrak{m}$ such that
\begin{equation} \label{eq:naturaly-reductive}
\langle[T, Q]_{\mathfrak{m}}, R \rangle =\langle [T ,R]_m,Q\rangle
\end{equation}
for the corresponding scalar product and any $T,Q,R\in\mathfrak m$.
\end{definition}

Let $(N,g)$ be a 2-step pseudo-Riemannian nilpotent group with the Lie algebra $\mathfrak{n}$, and  $\langle \cdot\,,\cdot \rangle$ the scalar product  (a symmetric non-degenerate bilinear form) generating the pseudo-Riemannian left invariant metric $g$. We call such groups {\it pseudo-Riemannian nilmanifolds}.
If the centre $\mathfrak{z}$ of $\mathfrak{n}$  is non-degenerate with respect to $\langle \cdot\,,\cdot \rangle$, then we write $\mathfrak{n}=\mathfrak{z}\oplus \mathfrak{v}$,
where
$\mathfrak{v}=\mathfrak{z}^\perp$ relative to $\langle \cdot\,,\cdot \rangle$.
In this case, $\mathfrak{v}$ is also non-degenerate scalar product space, see~\cite{ONeill}.

Whenever $(N,g)$ is connected simply connected, we do not distinguish between the
group of automorphisms of the nilmanifold $(N,g)$ and of its Lie algebra $(\mathfrak{n}=\mathfrak{z} \oplus \mathfrak{v},\langle \cdot\,,\cdot \rangle)$.
Note that each skew-symmetric derivation of $\mathfrak{n}$ leaves each of $\mathfrak{v}$ and $\mathfrak{z}$ invariant.
For any $Z\in \mathfrak{z}$, we consider the operator
\begin{equation}\label{eq_j_z_1}
J_Z:\mathfrak{v} \rightarrow \mathfrak{v}, \mbox{\,\,\,\, such that \,\,\,\,}
\langle J_Z(X),Y \rangle=\langle [X,Y],Z\rangle, \quad X,Y\in \mathfrak{v}.
\end{equation}
The linear map $J_Z$ satisfies $J_Z(X)=(\ad X)'(Z)$, where $(\ad X)'$ is dual to $\ad X(Y)=[X,Y]$ with respect to $\langle \cdot\,, \cdot \rangle$.
The group of isometries $H$ fixing the identity $e\in N$ of the nilmanifold $(N,g)$ is given by

\begin{equation}\label{isotgr1}
H = \{(\varphi,\psi)\in  O(\mathfrak{z},\langle \cdot\,, \cdot \rangle _{\mathfrak{z}})\times O(\mathfrak{v},  \langle \cdot\,, \cdot \rangle _{\mathfrak{v}})
\,|\, \psi J_Z \psi^{-1} = J_{\varphi(Z)},\  Z \in  \mathfrak{z}\},
\end{equation}
while its Lie algebra is
\begin{equation}\label{isotalg1}
\mathfrak{h}= \Der(\mathfrak{n}) \cap \mathfrak{so}(\mathfrak{n},\langle \cdot\,, \cdot \rangle)=
\{D=(C,A)\in  \mathfrak{so}(\mathfrak{z}, \langle \cdot\,, \cdot \rangle _{\mathfrak{z}})\times\mathfrak{so}(\mathfrak{v}, \langle \cdot\,, \cdot \rangle _{\mathfrak{v}})
\,|\, [A,J_Z]= J_{C(Z)},\  Z \in  \mathfrak{z}\}.
\end{equation}

The next result is well known, see e.g. Corollary 3.5 in \cite{CP2009} or Proposition 2.3 in \cite{Ov2013}.

\begin{prop}\label{pr.isgr.1}
Let $(N,g)$ be a pseudo-Riemannian nilmanifold with
non-degenerate center. Then the connected isometry group of $(N,g)$ is
$N\rtimes H$,
where $N$ is the set of left translations by elements of $N$ and the isotropy
subgroup $H$ is given by the isometric automorphisms \eqref{isotgr1} with Lie algebra $\mathfrak{h}$ as in \eqref{isotalg1}.
\end{prop}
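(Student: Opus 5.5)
Since $N$ is connected and simply connected, $\exp\colon\mathfrak n\to N$ is a diffeomorphism; we use it to move freely between the group and its Lie algebra. The plan is to identify the identity component $I_0(N,g)$ of the isometry group in two steps. First we show that every isometry fixing $e$ is an automorphism, so the isotropy group is $H$. Then we use transitivity of left translations. Write $G=I_0(N,g)$. The group $L(N)$ of left translations acts simply transitively by isometries, so $G=L(N)\cdot G_e$, where $G_e$ is the stabilizer of $e$. The whole statement therefore reduces to two claims: the connected component of $G_e$ consists of isometric automorphisms, and $L(N)$ is normal in $G$.

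For the first claim, the key step is to show that the space of left-invariant vector fields (equivalently $\mathfrak n$) is preserved under conjugation by elements of $G_e$. Equivalently, $\mathfrak n$ is an ideal of the Lie algebra $\mathfrak i$ of Killing fields. We would follow Wolf's argument, adapted to the pseudo-Riemannian case as in Cordero--Parker (their Corollary 3.5). Decompose $\mathfrak i=\mathfrak n\oplus\mathfrak k$ with $\mathfrak k=\mathrm{Lie}(G_e)$. A Killing field vanishing at $e$ is determined by its $1$-jet, that is, by the skew endomorphism $\nabla X|_e$. Using the curvature of $g$ we would show that $\mathfrak k$ normalizes $\mathfrak n$. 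The curvature can be computed explicitly from the Levi-Civita formula $\nabla_XY=\tfrac12[X,Y]-\tfrac12 J_Z$-type terms: concretely $\nabla_XY=\tfrac12[X,Y]$ for $X,Y\in\mathfrak v$, $\nabla_XZ=\nabla_ZX=-\tfrac12 J_ZX$, and $\nabla_ZZ'=0$. In the Riemannian setting this step uses that $\mathfrak n$ is the nilradical of $\mathfrak i$. Here, since $\mathfrak z$ is non-degenerate, we can instead argue algebraically. Any $D\in\mathfrak k$ acts on $T_eN\cong\mathfrak n$ as a skew map preserving $\nabla$ and $R$. It therefore preserves $\mathfrak z$, which is characterized via curvature or via the splitting, and it preserves $\mathfrak v=\mathfrak z^\perp$. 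One checks from $\nabla$ that preserving $\nabla$ at $e$ forces $D[X,Y]=[DX,Y]+[X,DY]$. Hence $D$ is a skew derivation, and the corresponding isometry is $\exp$ of the automorphism $e^{tD}$, which is an isometry because $D$ is skew. The Killing field determined by $D$ is then the one induced by the automorphism flow. By uniqueness of Killing fields given their $1$-jet, $\mathfrak k=\mathfrak h$. Finally, an isometric automorphism $(\varphi,\psi)$ preserves $\mathfrak z$ and $\mathfrak v$, and the compatibility with the bracket reads $\psi J_Z\psi^{-1}=J_{\varphi Z}$, by dualizing (\ref{eq_j_z_1}). This gives exactly \eqref{isotgr1}, and differentiating gives \eqref{isotalg1}.

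For the second claim, automorphisms normalize $L(N)$, since $\alpha L_n\alpha^{-1}=L_{\alpha(n)}$. Hence $G=L(N)\rtimes H_0$, where $H_0$ is the identity component of $H$, and $L(N)\cap H=\{e\}$ because left translations act freely. The main obstacle is the step that $\mathfrak k$ consists of derivations, that is, showing that the bracket is recovered from $\nabla$ and $R$ at $e$ without positivity. We would get around this by using non-degeneracy of $\mathfrak z$: the relation $[X,Y]=(\nabla_XY-\nabla_YX)$ combined with the $\mathfrak z$-component of $\nabla_XY$ for $X,Y\in\mathfrak v$ identifies the bracket tensorially, provided $D$ preserves the splitting. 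This in turn would be obtained from the invariant description of $\mathfrak z$, using the kernel of the appropriate curvature-derived operator as in Cordero--Parker.
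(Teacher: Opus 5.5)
The paper gives no proof of this proposition; it quotes Cordero--Parker and Ovando. Your outer reduction is fine: simple transitivity of $L(N)$, uniqueness of an isometry with a given $1$-jet, $\alpha L_n\alpha^{-1}=L_{\alpha(n)}$, and the dual form $\psi J_Z\psi^{-1}=J_{\varphi(Z)}$ of the bracket condition. The central step, however, is circular.

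An isometry fixing $e$ preserves the Levi-Civita connection, but this does \emph{not} give a condition of the form ``$D$ preserves $(X,Y)\mapsto(\nabla_XY)_e$''. That bilinear map is evaluated on left-invariant extensions. In other words, it is the tensor $\nabla-\nabla^0$, where $\nabla^0$ is the flat connection for which left-invariant fields are parallel. An isometry preserves this tensor only if it maps left-invariant fields to left-invariant fields, which is exactly the normality of $\mathfrak n$ in $\mathfrak i$ that you are trying to prove. The same objection applies to recovering the bracket ``tensorially'' from $\nabla_XY-\nabla_YX$. Likewise, ``$\mathfrak z$ is characterized via the splitting'' is circular. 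The only legitimate pointwise data are $g_e$, $R_e$, $(\nabla R)_e,\dots$ and tensors built from them, and you never extract $\mathfrak z$ or $J$ from these.

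Moreover, at the generality in which you argue (any $2$-step group with non-degenerate centre), the step is actually false, so non-degeneracy alone cannot close the gap. Take $\mathfrak h_3\oplus\mathbb R$ with $[X,Y]=Z$, $X,Y$ orthonormal spacelike, and $\mathfrak z=\spann\{Z,W\}$ with $\langle Z,Z\rangle=\langle W,W\rangle=0$ and $\langle Z,W\rangle=1$.
\begin{itemize}
\item Then $J_Z=0$, $J_WX=Y$ and $J_WY=-X$. Your Levi-Civita formulas give $\nabla Z=0$, and a direct check gives $\nabla R=0$ (it is a Cahen--Wallach space).
\item Hence every skew $D$ with $D\cdot R_e=0$ is the linear part of a Killing field vanishing at $e$. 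For instance, $D=R(X,W)$ satisfies $DW=\frac14X$ and $DX=-\frac14Z$. It moves $\mathfrak z$ into $\mathfrak v$, so it is not a derivation.
\item The isotropy algebra is spanned by $R(X,W)$, $R(Y,W)$ and $J_W$, so it is $3$-dimensional. By contrast, $\mathfrak h$ is spanned by $(0,J_W)$ alone.
\end{itemize}

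For the pseudo $H$-type groups the paper actually uses, a correct replacement is available. Let $f$ be any isometry fixing $e$.
\begin{itemize}
\item From $J_Z^2=-\langle Z,Z\rangle\Id$ and your Levi-Civita formulas, the Ricci operator equals $-\frac{r+s}{2}\Id$ on $\mathfrak v$ and $\frac{\dim\mathfrak v}{4}\Id$ on $\mathfrak z$. These eigenvalues are distinct, so $df_e$ preserves the splitting.
\item For $X,Y\in\mathfrak v$ and $Z\in\mathfrak z$ one has $(\nabla_X\mathrm{Ric})(Y,Z)=-\bigl(\frac{\dim\mathfrak v}{8}+\frac{r+s}{4}\bigr)\langle[X,Y],Z\rangle$. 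Invariance of $(\nabla\mathrm{Ric})_e$ under $df_e$, together with non-degeneracy of $\mathfrak z$, then gives $df_e[X,Y]=[df_eX,df_eY]$.
\end{itemize}
Thus $df_e$ is an isometric automorphism, and your $1$-jet uniqueness argument identifies $f$ with the corresponding group automorphism.
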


In this case,
the isotropy group of $(N,g)$ at the identity element $e$ is exactly $H$ with the embedding $a \in H \mapsto (e,a)\in N\rtimes H$.

\begin{prop}[Geodesic Lemma \cite{DK2007}]\label{pr.geodlem1}
Let $(M= G/H, g)$ be a homogeneous reductive pseudo-Riemannian
manifold, with the corresponding reductive decomposition $\mathfrak{g} = \mathfrak{h}\oplus \mathfrak{m}$. Then $M$
is a $G$-geodesic orbit space if and only if, for any $T\in  \mathfrak{m}$, there exist $P = P(T) \in \mathfrak{h}$
and $k = k(T) \in \mathbb{R}$ such that
\begin{equation} \label{geodlem1}
\langle[T + P, Q]_{\mathfrak{m}}, T \rangle = k \,\langle T , Q \rangle\quad\text{ for any $Q \in \mathfrak{m}$},
\end{equation}
where  $\langle \cdot\, ,\cdot \rangle$ denotes the inner product on  $\mathfrak{m}$ defined by the pseudo-Riemannian metric $g$, and the subscript $\mathfrak{m}$ in \eqref{geodlem1}
means taking the $\mathfrak{m}$-component in $\mathfrak{g} = \mathfrak{h}\oplus \mathfrak{m}$.
\end{prop}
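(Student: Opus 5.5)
The plan is the standard Killing-field computation on the reductive homogeneous space $M=G/H$ with origin $o=eH$. I identify $\mathfrak m\cong T_oM$ via $X\mapsto \frac{d}{dt}\big|_{t=0}\exp(tX)\cdot o$. For $X\in\mathfrak g$ let $X^*$ be the induced Killing field, $X^*_p=\frac{d}{dt}\big|_{0}\exp(tX)\cdot p$. For $X=T+P$ with $P\in\mathfrak h$ we have $X^*_o=T$. Consider the orbit $\gamma(t)=\exp(tX)\cdot o$. Since $\gamma$ is an integral curve of $X^*$ and $\exp(tX)$ is an isometry mapping $X^*$ to itself, the acceleration $\nabla_{\dot\gamma}\dot\gamma$ at time $t$ is the image under $d\exp(tX)$ of the acceleration $a:=\nabla_{X^*}X^*(o)$. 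The same transport carries $\dot\gamma(0)=T$ to $\dot\gamma(t)$. So the whole question reduces to the single vector $a\in T_oM\cong\mathfrak m$.

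\textbf{Step 1: compute $a$.} For a Killing field, $\langle\nabla_Y X^*,X^*\rangle=-\langle\nabla_{X^*}X^*,Y\rangle$ (skew-symmetry of $\nabla X^*$). Hence
\[
\langle a,Q\rangle=-\tfrac12\,Q\langle X^*,X^*\rangle(o)=-\langle\nabla_{Q^*}X^*,X^*\rangle(o),\qquad Q\in\mathfrak m.
\]
Write $\nabla_{Q^*}X^*=\nabla_{X^*}Q^*+[Q^*,X^*]$. Use $[Q^*,X^*]=-[Q,X]^*$, with the sign convention for fundamental fields, and the skew-symmetry of $\nabla Q^*$, which gives $\langle\nabla_{X^*}Q^*,X^*\rangle=0$. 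At $o$ only the $\mathfrak m$-component of a fundamental field survives. The result is
\[
\langle a,Q\rangle=\pm\langle[T+P,Q]_{\mathfrak m},T\rangle .
\]
The main bookkeeping obstacle of the proof is fixing the sign conventions here. The final criterion is insensitive to this sign, since $k$ may be replaced by $-k$.

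\textbf{Step 2: pregeodesic criterion.} Condition \eqref{geodlem1} says exactly that $\langle a,Q\rangle=\pm k\langle T,Q\rangle$ for all $Q$. By non-degeneracy of the metric on $\mathfrak m$, this is equivalent to $a=\pm kT$. By the transport remark above, it is then equivalent to $\nabla_{\dot\gamma}\dot\gamma=c\,\dot\gamma$ with a constant $c$ along the whole curve. A curve with this property is a pregeodesic. Reparametrizing by $s=(e^{ct}-1)/c$ (or $s=t$ if $c=0$) turns it into an affinely parametrized geodesic with initial vector $T$.

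If $T$ is non-null, $\langle\dot\gamma,\dot\gamma\rangle\equiv\langle T,T\rangle$ is constant. This gives $0=\langle a,T\rangle=c\langle T,T\rangle$, so $c=0$ and $\gamma$ itself is the geodesic. Thus $k\neq0$ can only occur for null $T$, which is the genuinely pseudo-Riemannian feature.

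\textbf{Step 3: the equivalence.} If the condition holds for every $T\in\mathfrak m$, then by Step 2 the geodesic through $o$ with initial vector $T$ is, up to affine reparametrization, the orbit of $\exp(t(T+P))$. By homogeneity, every geodesic of $M$ is the image under an element $g\in G$ of such a geodesic through $o$, and is therefore an orbit of the conjugate one-parameter group $g\exp(t(T+P))g^{-1}$.

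Conversely, suppose the geodesic with initial vector $T$ is an orbit $\exp(tX)\cdot o$ up to reparametrization. Write $X=T'+P$ with $T'\in\mathfrak m$, $P\in\mathfrak h$. Comparing velocities at $o$ gives $T'=\lambda T$ with $\lambda\neq0$. Rescaling $X$, we may assume $X=T+P$. Since the orbit is a reparametrized geodesic, $a$ is proportional to $T$, and Step 1 yields \eqref{geodlem1}.
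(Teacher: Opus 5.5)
The paper gives no proof of this proposition and simply imports it from Du\v{s}ek--Kowalski \cite{DK2007}. Your argument is correct and is essentially the standard proof of that lemma: it uses the invariance of $X^*$ under $\exp(tX)$, the Killing identity $\langle \nabla_{X^*}X^*,Q^*\rangle(o)=-\langle [Q^*,X^*],X^*\rangle(o)$, and the exponential reparametrization when $c\neq 0$. One point deserves explicit statement: for $c\neq 0$ the orbit covers only the affine-parameter half-line $1+cs>0$ of the geodesic, as the boost orbit of a null ray in $\mathbb R^{1,1}$ shows. So ``is an orbit'' must be read in the Du\v{s}ek--Kowalski sense of a diffeomorphic reparametrization of $\mathbb R$ onto an open interval, which is exactly what your Step~2 delivers.
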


\begin{remark}
Note that $k(T) = 0$ unless $T$ is a null vector.  It suffices to substitute $Q = T$ in \eqref{geodlem1}.
\end{remark}

Proposition~\ref{pr.geodlem1} for $2$-step nilpotent pseudo-Riemannian groups can be rewritten in the spirit of work~\cite{Gor96},
where this idea was used for Riemannian metrics on nilpotent Lie groups, see also~\cite{Bar, GorNik2018}.

\begin{prop}[Section 2 \cite{FuMaNik25}]\label{gonil1n.2}
A pseudo-Riemannian nilmanifold  $(N,g)$ is geodesic orbit  if and only if for any $Z\in \mathfrak{z}$ and
$X\in \mathfrak{v}$ there is $D=(C,A)\in\mathfrak h$ such that
$$
\Big(C+k\Id \Big)(Z)=0,\quad \Big(A+k\Id\Big)(X)=J_Z(X).
$$
\end{prop}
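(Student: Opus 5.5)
We derive this from the Geodesic Lemma (Proposition~\ref{pr.geodlem1}), applied to the reductive decomposition $\mathfrak g=\mathfrak h\oplus\mathfrak m$ of the full isometry algebra $\mathfrak g=\mathfrak n\rtimes\mathfrak h$, with $\mathfrak m=\mathfrak n$. By Proposition~\ref{pr.isgr.1} this decomposition is reductive: $\mathfrak h$ consists of derivations of $\mathfrak n$, so $[\mathfrak h,\mathfrak n]\subset\mathfrak n$. Since the connected isometry group is $N\rtimes H$, the manifold $(N,g)$ is GO if and only if it is $(N\rtimes H)$-GO, and we may use the lemma with this $G$. Take $T=X+Z$ with $X\in\mathfrak v$, $Z\in\mathfrak z$, and $P=D=(C,A)\in\mathfrak h$. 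For $Q\in\mathfrak m=\mathfrak n$, the bracket in $\mathfrak g$ gives
\[
[T+D,Q]_{\mathfrak m}=[T,Q]_{\mathfrak n}-D(Q),
\]
because $[D,Q]=D(Q)\in\mathfrak n$. The sign convention plays no role, since $D$ ranges over the vector space $\mathfrak h$ and can be replaced by $-D$.

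We then test condition \eqref{geodlem1} on $Q=Y\in\mathfrak v$ and on $Q=W\in\mathfrak z$ separately.

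\textbf{Case $Q=Y\in\mathfrak v$.} Here $[T,Y]=[X,Y]\in\mathfrak z$, so $\langle [X,Y],X+Z\rangle=\langle J_Z X,Y\rangle$ by \eqref{eq_j_z_1}. Next, $D(Y)=AY\in\mathfrak v$, and skew-symmetry gives $\langle AY,X\rangle=-\langle Y,AX\rangle$. The left side of \eqref{geodlem1} therefore becomes $\langle J_ZX+AX,Y\rangle$ (with our sign of $D$), while the right side is $k\langle X,Y\rangle$. Non-degeneracy of $\mathfrak v$ then yields $J_ZX+AX=kX$.

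\textbf{Case $Q=W\in\mathfrak z$.} Here $[T,W]=0$ and $D(W)=CW$, so the condition reads $-\langle CW,Z\rangle=k\langle Z,W\rangle$. Skew-symmetry turns this into $\langle CZ,W\rangle=k\langle Z,W\rangle$, and non-degeneracy of $\mathfrak z$ gives $CZ=kZ$.

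Replacing $D$ by $-D$ converts these two equations into $(C+k\Id)Z=0$ and $(A+k\Id)X=J_ZX$, exactly as stated. Since \eqref{geodlem1} is linear in $Q$ and $\mathfrak n=\mathfrak z\oplus\mathfrak v$, the two cases together are equivalent to \eqref{geodlem1} for all $Q$. Each step is reversible, so existence of $(D,k)$ for every $T$ is equivalent to the stated condition for every $(Z,X)$.

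The main care needed is in the sign conventions for the bracket $[D,Q]$ in the semidirect product and in tracking $k$ versus $-k$. As noted, the sign of $D$ is absorbed by $\mathfrak h$ being a vector space; because $k$ is an arbitrary real number, the sign of $k$ is likewise immaterial.
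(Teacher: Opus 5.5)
Your proposal is correct and follows essentially the intended route. The paper gives no separate proof: it cites \cite{FuMaNik25} and presents the statement as the Geodesic Lemma (Proposition~\ref{pr.geodlem1}) rewritten for $\mathfrak g=\mathfrak n\rtimes\mathfrak h$ with $\mathfrak m=\mathfrak n$, and testing \eqref{geodlem1} separately on $Q\in\mathfrak v$ and $Q\in\mathfrak z$ is exactly that rewriting. One cosmetic point: with $[D,Q]=D(Q)$, as you state, $[T+D,Q]_{\mathfrak m}=[T,Q]+D(Q)$, which gives $(A+k\Id)X=J_ZX$ and $(C+k\Id)Z=0$ directly, so the minus sign and the final replacement $D\mapsto -D$ are unnecessary.
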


\begin{remark}
The defining equality~\eqref{eq_j_z_1} for $J_Z$ shows that $J_Z\in\mathfrak{so}(\mathfrak v,\langle \cdot\,, \cdot \rangle _{\mathfrak{v}}))$ for any $Z\in\mathfrak z$.
We set $\mathbf{V}=J(\mathfrak{z})$ to be a linear subspace in $\mathfrak{so}(\mathfrak{v},\langle \cdot\,, \cdot \rangle _{\mathfrak{v}})$.
Further, for every $D=(C,A)\in \mathfrak{h}$ and $Z\in \mathfrak{z}$ we get
$J_{C(Z)}=[A,J_Z]$ by the condition on $D$ to be a skew-symmetric derivation. Then every element of $\mathfrak{h}$
is determined by a skew-symmetric operator $A \in \mathfrak{so}(\mathfrak{v}, \langle \cdot\,, \cdot \rangle _{\mathfrak{v}})$ with the property
$[A,\mathbf{V}]\subset \mathbf{V}$ $\big($i.e., $A$ is from the normalizer of $\mathbf{V}$ in $\mathfrak{so}(\mathfrak{v}, \langle \cdot\,, \cdot \rangle _{\mathfrak{v}})\big)$.
In this case the operator $C\in \mathfrak{so}(\mathfrak{z}, \langle \cdot\,, \cdot \rangle _{\mathfrak{z}})$ can be recovered from the equality
$J_{C(Z)}=[A,J_Z]$ for all $Z\in  \mathfrak{z}$.
\end{remark}

A special case of geodesic orbit pseudo-Riemannian spaces are naturally reductive homogeneous pseudo-Riemannian spaces, see Definition~\ref{def:naturaly-reductive}, and also~\cite{Ov2013} with references therein.
The naturally reductive Riemannian homogeneous spaces are generalizations of normal homogeneous Riemannian spaces
and symmetric spaces, see, e.g., \cite{AFF,DZ,Gor85,KV85,Storm19,Storm20}.

\begin{theorem}[Theorem 3.2~\cite{Ov2013}]\label{th:Ovanto}
Let $(N,g)$ denote a 2-step pseudo-Riemannian nilmanifold with a non-degenerate center. Assume that the map $J\colon \mathfrak{z}\to \mathfrak{so}(\mathfrak{v})$ is injective, see~\eqref{eq_j_z_1}.
Then the metric is naturally reductive with respect to
$G= N\rtimes H$ {\rm(}see Proposition~\ref{pr.isgr.1}{\rm)}, if and only if
\begin{itemize}
\item[(i)] $\mathbf {V}=J(\mathfrak{z})$ is a Lie subalgebra of $\mathfrak{so}(\mathfrak{v})$;
\item[(ii)] $[J(Z_1), J(Z_2)] = J\big(\tau_{Z_1}(Z_2)\big)$ where $\tau_{Z_1}\in\mathfrak{so}(\mathfrak{z})$ for any $Z_1\in \mathfrak{z}$.
\end{itemize}
\end{theorem}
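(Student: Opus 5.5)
The plan is to work with the standard description of reductive complements in $\mathfrak g=\mathfrak n\rtimes\mathfrak h$ and to reduce natural reductivity to an identity that pins down the $\mathfrak v$-part of the complement. Any reductive complement $\mathfrak m$ to $\mathfrak h$ in $\mathfrak g=\mathfrak n\oplus\mathfrak h$ is a graph
\[
\mathfrak m=\{T+\Phi(T)\,:\,T\in\mathfrak n\},
\]
where $\Phi\colon\mathfrak n\to\mathfrak h$ is linear. The condition $[\mathfrak h,\mathfrak m]\subset\mathfrak m$ amounts to $\Phi([D,T])=[D,\Phi(T)]$ for all $D\in\mathfrak h$. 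We identify $\mathfrak m$ with $\mathfrak n$ through the projection, which is an isometry for the metric. For $T,Q\in\mathfrak n$ we then have
\[
[T+\Phi T,\,Q+\Phi Q]_{\mathfrak m}\;\leftrightarrow\;[T,Q]+\Phi(T)Q-\Phi(Q)T .
\]
Hence condition \eqref{eq:naturaly-reductive} reads $\langle [T,Q]+\Phi(T)Q-\Phi(Q)T,\,T\rangle=0$ for all $T,Q\in\mathfrak n$; this is the polarized form of the skew-symmetry in $Q,R$. Throughout we write $\Phi(U)=(C_U,A_U)\in\mathfrak h$.

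\emph{Necessity.} Suppose $\mathfrak m$ is naturally reductive. Take $T=X+Z$ and $Q=Y$ with $X,Y\in\mathfrak v$ and $Z\in\mathfrak z$, and extract the part of the identity that is bilinear in $(X,Z)$. We use three facts: $[X,Y]\in\mathfrak z$, $A_U$ is skew, and each element of $\mathfrak h$ preserves $\mathfrak v$ and $\mathfrak z$. The term $\langle C_Y Z,Z\rangle$ vanishes because $C_Y$ is skew, and the remaining terms are quadratic in $X$. What is left is
\[
\langle J_ZX,Y\rangle-\langle A_ZX,Y\rangle=0
\]
for all $X,Y$, so $A_Z=J_Z$. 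Since $\Phi(Z)=(C_Z,J_Z)\in\mathfrak h$, the defining relation \eqref{isotalg1} gives $[J_Z,J_W]=J_{C_Z(W)}$. This is (ii) with $\tau_Z=C_Z\in\mathfrak{so}(\mathfrak z)$, and (i) follows at once.

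\emph{Sufficiency.} Assume (i) and (ii). Injectivity of $J$ and antisymmetry of the bracket give $\tau_Z(W)=-\tau_W(Z)$, and in particular $\tau_Z(Z)=0$. Relation (ii) says precisely that $(\tau_Z,J_Z)\in\mathfrak h$. Define $\Phi|_{\mathfrak v}=0$ and $\Phi(Z)=(\tau_Z,J_Z)$. For equivariance, take $D=(C,A)\in\mathfrak h$. Then $[D,\Phi(Z)]$ has $\mathfrak v$-part $[A,J_Z]=J_{C(Z)}$. Since an element of $\mathfrak h$ is determined by its $\mathfrak v$-part (the $\mathfrak z$-part is recovered via injectivity of $J$), this yields $[D,\Phi(Z)]=\Phi(C(Z))$. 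Equivariance on $\mathfrak v$ is trivial, because $D$ preserves $\mathfrak v$.

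Finally we check the identity with $T=X+Z$ and $Q=Y+W$. The bracket term gives $\langle J_ZX,Y\rangle$. The term $\Phi(T)Q$ gives $\langle\tau_ZW,Z\rangle-\langle J_ZX,Y\rangle$. The term $-\Phi(Q)T$ gives $-\langle\tau_WZ,Z\rangle-\langle J_WX,X\rangle$. Here $\langle J_WX,X\rangle=0$ by skew-symmetry, and $\langle\tau_ZW,Z\rangle=-\langle W,\tau_ZZ\rangle=0=\langle\tau_WZ,Z\rangle$. So everything cancels, and $\mathfrak m$ is naturally reductive.

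The main obstacle is the bookkeeping in the necessity step: correctly isolating the $(X,Z)$-bilinear terms and confirming that no contribution from $\Phi|_{\mathfrak v}$ survives there. We handle this by the grading argument above, using that elements of $\mathfrak h$ preserve $\mathfrak v$ and $\mathfrak z$.
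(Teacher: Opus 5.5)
The paper gives no proof of this statement. It imports it from Ovando \cite{Ov2013} and uses it only as a black box in Sections~\ref{sec.N01} and~\ref{sec.N12} and in Theorem~\ref{pr.natred.1}.

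Your argument is a correct, self-contained proof along the standard lines, as in Gordon's treatment of the Riemannian case:
\begin{itemize}
\item Every complement of $\mathfrak h$ in $\mathfrak n\rtimes\mathfrak h$ is the graph of a linear map $\Phi\colon\mathfrak n\to\mathfrak h$.
\item Natural reductivity becomes $\langle [T,Q]+\Phi(T)Q-\Phi(Q)T,\,T\rangle=0$.
\item The $(X,Z)$-bilinear part forces the $\mathfrak v$-component of $\Phi(Z)$ to be $J_Z$, and then $\Phi(Z)\in\mathfrak h$ is exactly (ii).
\item Conversely, $\mathfrak m=\mathfrak v\oplus\{Z+(\tau_Z,J_Z)\}$ is a naturally reductive complement.
\end{itemize}
The computations check out: the bracket formula in $\mathfrak g$; the vanishing of $\langle A_YX,X\rangle$ and $\langle C_YZ,Z\rangle$ by skewness; the separation of $\langle A_XY,X\rangle$ by setting $Z=0$; equivariance via uniqueness of the $\mathfrak z$-part; and the final cancellation using $\tau_ZZ=0$.

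Compared with a citation, your route buys an explicit naturally reductive complement. It also shows that necessity does not use injectivity of $J$, which enters only in the sufficiency direction.

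Three small points are worth making explicit:
\begin{itemize}
\item Linearity of $Z\mapsto\tau_Z$, needed for $\Phi$ to be linear, follows from injectivity of $J$.
\item You use the standard skew form of the natural reductivity condition. That is the intended one: the display \eqref{eq:naturaly-reductive} as printed is symmetric in $Q,R$, which would force $[\mathfrak m,\mathfrak m]\subset\mathfrak h$, so it is missing a minus sign.
\item If reductivity is meant as $\mathrm{Ad}(H)$-invariance for the possibly disconnected group $H$ of \eqref{isotgr1}, your uniqueness argument still gives it. Indeed, $\mathrm{Ad}(\varphi,\psi)$ sends $Z+(\tau_Z,J_Z)$ to $\varphi(Z)$ plus an element of $\mathfrak h$ whose $\mathfrak v$-part is $\psi J_Z\psi^{-1}=J_{\varphi(Z)}$. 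That element is therefore $(\tau_{\varphi(Z)},J_{\varphi(Z)})$.
\end{itemize}
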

Since the map $J$ is supposed to be injective, the map $\tau$ can be easily recovered from ${\rm(ii)}$.
%%%%%%%%%%%%%%%%%%%%%%%%%%%%%%%%%%%%%%%%%%%%%

\subsection{Pseudo-Riemannian $H$-type nilmanifolds}\label{se.rseudo.htnlg}

Let $(N,g)$ be a 2-step pseudo-Riemannian nilmanifold and $\mathfrak n=\mathfrak z\oplus\mathfrak v$ its Lie algebra endowed with a scalar product $\langle \cdot\,, \cdot \rangle$
making the center non-degenerate. We identify $(\mathfrak z,\langle \cdot\,, \cdot \rangle_{\mathfrak z})$ with the pseudo Euclidean vector space
$\mathbb{R}^{r,s}=(\mathbb{R}^{r+s},\langle \cdot\,, \cdot \rangle_{r,s})$, where
$$
\langle Z, W \rangle_{r,s} =
\sum_{i=1}^r z_i\, w_i-\sum_{j=1}^s z_{r+j}\, w_{r+j}, \quad Z,W \in \mathbb{R}^{r+s}.
$$
If the linear operator $J\colon\mathfrak{z} \rightarrow \mathfrak{so}(\mathfrak{v},\langle \cdot\,, \cdot \rangle_{\mathfrak v})$ defined by
\begin{equation}\label{eq:JJ}
\langle J_Z(X),Y \rangle_{\mathfrak v}=\langle [X,Y],Z\rangle_{r,s}, \quad X,Y\in \mathfrak{v},\ Z\in \mathfrak{z}=\mathbb{R}^{r,s},
\end{equation}
satisfies $J_Z^2 (X)=-\langle Z,Z \rangle_{r,s} X$ for any $Z \in \mathbb{R}^{r,s}$ and all $X \in \mathfrak{v}$,
then $\mathfrak{n}=\mathfrak n_{r,s}$ is called the pseudo $H$(eisenberg)-type Lie algebra. It is easy to check that
this definition %the definition of {\color{blue}pseudo} $H$-type Lie algebra
implies
\begin{equation}\label{eq:JJJ}
\langle J_Z(X),J_W(X) \rangle_{\mathfrak v}=\langle Z,W \rangle_{r,s}\langle X,X \rangle_{\mathfrak v}.
\end{equation}
We denote by ${N}_{r,s}$ the connected simply connected Lie group, whose Lie algebra is the pseudo $H$-type Lie algebra $\mathfrak n_{r,s}$. The $H$-type Lie groups $N_{r,0}$ with a positive definite scalar product were introduced in~\cite{Kap80}
and with an arbitrary indefinite scalar product in~\cite{Cia}, see also~\cite{GKM13}.

These
Lie algebras are related to the representations of the
Clifford algebras in the following way. Let $J\colon\operatorname{Cl}(\mathbb{R}^{r,s}) \rightarrow \operatorname{End} (\mathfrak{v})$ be a representation of
the Clifford algebra $\operatorname{Cl}(\mathbb{R}^{r,s})$ generated by the pseudo Euclidean vector space $\mathbb{R}^{r,s}$. If the representation can be reduced to $J\colon\operatorname{Cl}(\mathbb{R}^{r,s}) \rightarrow \mathfrak{so}(\mathfrak{v},\langle \cdot\,,\cdot \rangle_{\mathfrak{v}})$, then we get a pseudo $H$-type Lie algebra $\mathfrak n_{r,s}=(\mathfrak z\oplus\mathfrak v, [\cdot\,,\cdot],\langle \cdot\,,\cdot \rangle_{\mathfrak{z}}+\langle \cdot\,,\cdot \rangle_{\mathfrak{v}})$ with the Lie bracket $[\cdot\,,\cdot]$ defined in~\eqref{eq:JJ},
see details in~\cite{Cia,FuMa17, FuMa19, FuMa21,GMKMV, Kap81,  KapTir}. The scalar product $\langle \cdot\,,\cdot \rangle_{\mathfrak{v}}$ in this case is called admissible and $\mathfrak v$ is called admissible Clifford module. Thus we obtain
$$
\langle J_Z(X),Y \rangle_{\mathfrak v}=-\langle X,J_Z(Y) \rangle_{\mathfrak v},\quad \text{for any }\quad Z\in \mathbb{R}^{r,s},\quad X,Y\in\mathfrak v.
$$

The space $(\mathfrak{v},\langle \cdot\,,\cdot \rangle_{\mathfrak{v}})$ is isometric to $\mathbb{R}^{l,l}$ for some $l \in \mathbb{N}$ if $s>0$,
whereas $(\mathfrak{v},\langle \cdot\,,\cdot \rangle_{\mathfrak{v}})$ is isometric either to $\mathbb{R}^{2l,0}$ or $\mathbb{R}^{0,2l}$  if $s=0$, see e.g.~\cite{Cia}, or~\cite[Proposition 2.2.2]{FuMa21}).
We use the identification $\mathbf{V}=J(\mathfrak{z}) \subset \mathfrak{so}(\mathfrak{v},\langle \cdot\,,\cdot \rangle_{\mathfrak{v}})\cong\mathfrak{so}(l,l)$, $\dim \mathbf{V}=\dim \mathfrak{z}=r+s$. The groups of automorphisms and isometries for $H$-type nilmanifolds with a positive definite metric were studied in~\cite{Rie82,Saal96} and for $H$-type nilmanifolds with indefinite metric in~\cite{FuMa21}.

\section{Pseudo-Riemannian $H$-type nilmanifolds $N_{0,1}$ and $N_{1,2}$}\label{sec.example}

\subsection{Pseudo-Riemannian $H$-type nilmanifold $N_{0,1}$}\label{sec.N01}
We denote by $N_{0,1}$ the $H$-type nilmanifold with the Lie algebra isometric to
$\mathbb{R}^{0,1}\oplus\mathbb{R}^{l,l}$. The Clifford module $(\mathfrak v, \langle \cdot\,, \cdot \rangle_{l,l})\cong \mathbb{R}^{l,l}$ can be written as a direct sum $\mathfrak v=\oplus_{j=1}^l(\mathfrak v_{\min})_j$ of $l$ minimal dimensional admissible modules $\mathfrak v_{\min}\cong\mathbb R^{1,1}$.
We choose an orthonormal basis $\{X_1,\ldots X_l,Y_1\ldots,Y_l,Z\}$ satisfying
$$
\langle X_i,X_i\rangle_{l,l}=-\langle Y_i,Y_i\rangle_{l,l}=1,\quad \langle Z,Z\rangle_{0,1}=-1,\quad [X_i,Y_i]=Z,
$$
and all other commutators vanish. Then
$$
J_Z=\begin{pmatrix}0&\Id_l\\\Id_l&0\end{pmatrix}\in \mathfrak{so}(l,l),\quad J_Z^2=-\langle Z,Z\rangle_{0,1} \Id_{2l}=\Id_{2l}.
$$
It is clear that $J_Z$
spans a one dimensional subspace $\mathbf{V}=J(\mathfrak{z})\subset \mathfrak{so}(l,l)$. The space $\mathbf{V}$ is an abelian subalgebra of
$\mathfrak{so}(l,l)$ and
$$
[J(Z_1), J(Z_2)] = [J(aZ), J(bZ)] =ab[J(Z), J(Z)] =0=J\big(\tau_{Z_1}(Z_2)\big),\quad a,b\in\mathbb{R},
$$
for the operator $\mathfrak{so}(\mathfrak{z},\langle \cdot\,,\cdot\rangle_{\mathfrak z})\ni\tau_{Z_1}\equiv 0$ for any $Z_1\in\mathfrak{z}$.
Therefore, we get naturally reductive hence, geodesic orbit nilmanifold by Theorem~\ref{th:Ovanto}.

\subsection{Pseudo-Riemannian $H$-type nilmanifold $N_{1,2}$}\label{sec.N12}

The $H$-type nilmanifold $N_{1,2}$ is the $H$-type Lie group of dimension $4l+3$, $l\geq 1$, with the Lie algebra isometric to $\mathbb{R}^{1,2}\oplus\mathbb{R}^{2l,2l}$.  In this  case we need to explain the structure of an admissible Clifford module in details, see~\cite{FuMa19}.
The Clifford algebra $\Cl(\mathbb R^{1,2})$ has two irreducible non-equivalent modules $V_+,V_-$ of dimension $\dim(V_+)=\dim(V_-)=2$. Moreover
$$
J_{Z_1}J_{Z_2}J_{Z_3}x=x\quad\text{for}\quad x\in V_+,\qquad
J_{Z_1}J_{Z_2}J_{Z_3}x=-x\quad\text{for}\quad x\in V_-.
$$
To define a minimal dimensional admissible module, which will carry a non-degenerate scalar product one needs to consider the vector space $\mathfrak v_{\min}=V_+\times V_-$ with a scalar product such that
$$
\langle x,x\rangle_{\mathfrak v_{\min}}>0\quad\text{for}\quad x\neq 0,\ x\in V_+,\qquad
\langle x,x\rangle_{\mathfrak v_{\min}}<0\quad\text{for}\quad x\neq 0,\  x\in V_-.
$$
Thus a minimal admissible module $\mathfrak v_{\min}$ is isometric to $\mathbb R^{2,2}$ and it is a direct sum of two non-equivalent Clifford modules, endowed by a neutral metric. For the minimal admissible module $\mathfrak v_{\min}$ one can construct an orthonormal basis by the following. Let $v\in\mathfrak v_{\min}$ be such that $J_{Z_1}J_{Z_2}J_{Z_3}v=v$ and $\langle v,v\rangle_{\mathfrak v_{\min}}=1$. Then we define the orthonormal basis by
\begin{equation}\label{eq:basis12}
X_1=v,\quad X_2=J_{Z_1}v,\quad X_3=J_{Z_2}v,\quad X_4=J_{Z_3}v=J_{Z_1}J_{Z_2}v.
\end{equation}
Note that now $J_{Z_1}J_{Z_2}J_{Z_3}x=x$ for any $x\in \mathfrak v_{\min}$ due to the construction of admissible module and~\eqref{eq:JJJ}.

Any admissible module of a Clifford algebra is completely reducible to a sum of minimal admissible modules, see~\cite[Section 4.1]{FuMa19}. Hence an admissible Clifford module for $\Cl(\mathbb R^{1,2})$ can be written as an orthogonal direct sum $\mathfrak v=\oplus_{j=1}^l(\mathfrak{v}_{\min})_j$ of $l$ minimal admissible modules isometric to $\mathbb R^{2,2}$. The operators $J_{Z_i}$ act block-diagonally with the same algebraic relations on each summand.
We choose $v_j\in (\mathfrak v_{\min})_j$ such that $J_{Z_1}J_{Z_2}J_{Z_3}v_j=v_j$, $\langle v_j,v_j\rangle_{\mathfrak v_{\min}}=1$, and define the orthonormal basis $\{X_{j1},X_{j2},X_{j3},X_{j4},Z_1,Z_2,Z_3\}_{j=1}^l$ for $\mathfrak v\oplus\mathfrak z=\oplus_{j=1}^l(\mathfrak{v}_{\min})_j\oplus\mathfrak z$ as in~\eqref{eq:basis12}. Then the basis satisfies
$$
\langle X_{j1},X_{j1}\rangle_{2l,2l}=\langle X_{j2},X_{j2}\rangle_{2l,2l}=
-\langle X_{j3},X_{j3}\rangle_{2l,2l}=-\langle X_{j4},X_{j4}\rangle_{2l,2l}=1,\qquad j=1,\ldots,l,
$$
$$
\langle Z_1,Z_1\rangle_{1,2}=-\langle Z_2,Z_2\rangle_{1,2}=-\langle Z_3,Z_3\rangle_{1,2}=1,
$$
and has non-vanishing commutation relations
$$
[X_{j1},X_{j2}]=-[X_{j3},X_{j4}]=Z_1,\quad [X_{j1},X_{j3}]=-[X_{j2},X_{j4}]=Z_2,\quad [X_{j1},X_{j4}]=[X_{j2},X_{j3}]=Z_3.
$$
Note also that under this construction all the minimal admissible modules are equivalent because of $J_{Z_1}J_{Z_2}J_{Z_3}x=x$ for any $x\in \mathfrak v_{\min}$, see also~\cite[Theorem 4.1.1]{FuMa19}.

The commutation relations
$$
2J_{Z_3}=2J_{Z_1}J_{Z_2}=[J_{Z_1},J_{Z_2}], \quad -2J_{Z_2}= 2J_{Z_1}J_{Z_3}= [J_{Z_1},J_{Z_3}],\quad
-2J_{Z_1}=2J_{Z_2}J_{Z_3}=[J_{Z_2},J_{Z_3}],
$$
show that $\mathbf {V}=\operatorname{span}\{J_{Z_1},J_{Z_2},J_{Z_3}\}\subset \mathfrak{so}(2,2)$ is a Lie subalgebra.
We conclude that pseudo-Riemannian $H$-type nilmanifold $N_{1,2}$ is naturally reductive by Theorem~\ref{th:Ovanto}, hence, geodesic orbit.

\subsection{Proof of the first statement in Theorem~\ref{th.class.pseudo}}

Sections~\ref{sec.N01} and~\ref{sec.N12} allow us to obtain a complete classification of naturally reductive pseudo-Riemannian $H$-type nilmanifolds~$N_{r,s}$.
\begin{prop}[Proposition 4, Lemma 1 ~\cite{FuMaNik25}]\label{pr.triple.1}
Let $\mathfrak{n}_{r,s}$ be a pseudo $H$-type Lie algebra, $\mathbf{N}$ and $\mathbf{Z}$ the normalizer and the centralizer of $\mathbf{V}=J(\mathfrak z)$ in $\mathfrak{so}(l,l)$. Then we have the following properties:

1. $[\mathbf{V},\mathbf{V}]$ and $\mathbf{L}:=[\mathbf{V},\mathbf{V}]+\mathbf{V}$ are Lie subalgebras in $\mathfrak{so}(l,l)$;
\smallskip

2. The Lie algebra $[\mathbf{V},\mathbf{V}]$ is isomorphic to $\mathfrak{so}(r,s)$;
\smallskip

3. The Lie algebra $\mathbf{L}~\text{is commutative if}~
(r, s)\in\left\{(1,0), (0,1)\right\}$,
it is simple if\\ %\linebreak
\hspace{0.8cm} $(r, s)\not\in \left\{(1,0), (0,1),(3,0), (1,2)\right\}$,
and it is semisimple if $(r, s)\in \left\{(3,0), (1,2)\right\}$;
\smallskip

4. If $\mathbf{V}\cap [\mathbf{V},\mathbf{V}]$ is nontrivial and $\mathbf{L}$ is simple, then $\mathbf{V}=[\mathbf{V},\mathbf{V}]=\mathbf{L}$;
\smallskip

5.  $\mathbf{N}=[\mathbf{V},\mathbf{V}] \oplus \mathbf{Z}$ {\rm(}a direct sum of Lie algebras{\rm)}.
\end{prop}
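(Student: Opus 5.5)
The plan is to realize everything concretely inside the Clifford algebra. I choose an orthonormal basis $Z_1,\dots,Z_{r+s}$ of $\mathfrak z=\mathbb R^{r,s}$ with $\langle Z_i,Z_i\rangle=\epsilon_i=\pm1$, and write $J_i=J_{Z_i}$. The defining relations are then
\[
J_iJ_j+J_jJ_i=-2\epsilon_i\delta_{ij}\Id .
\]

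\textbf{Items 1 and 2.} For $i\ne j$ we have $[J_i,J_j]=2J_iJ_j$, so $[\mathbf V,\mathbf V]=\spann\{J_iJ_j\}_{i<j}$. For $i,j,k$ distinct, $[J_iJ_j,J_k]=0$, while $[J_iJ_j,J_j]=-2\epsilon_jJ_i$. Hence $[[\mathbf V,\mathbf V],\mathbf V]\subset\mathbf V$. The Jacobi identity then shows that $[\mathbf V,\mathbf V]$ is closed under brackets and that $\mathbf L$ is a subalgebra.

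Next, the map $J_iJ_j\mapsto \epsilon_j E_{ij}-\epsilon_iE_{ji}$ is the standard spin representation of $\mathfrak{so}(r,s)$. It is the infinitesimal action of $\mathfrak{spin}(r,s)\subset \Cl^{\rm even}$ on the vectors $\mathbf V\cong\mathbb R^{r,s}$. One checks it is a homomorphism using the bracket $[J_iJ_j,J_jJ_k]=-2\epsilon_jJ_iJ_k$ and its companions. Injectivity follows because the $J_iJ_j$ with $i<j$ are linearly independent in $\Cl(\mathbb R^{r,s})$, and the representation on $\mathfrak v$ is faithful on these products. Faithfulness holds since $\mathfrak v$ is a module, and the even part acts faithfully unless $r+s$ is odd. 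When $r+s$ is odd, the only kernel comes from the pseudoscalar, which is not a degree $2$ element. This gives item 2.

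\textbf{Item 3.} I identify $\mathbf L$ with the image of $\mathfrak{so}(r+1,s)$ or $\mathfrak{so}(r,s+1)$. Concretely, I adjoin a formal generator $e_0$ and send $J_i\mapsto e_0e_i$. Then $\mathbf L\cong\mathfrak{so}(\mathbb R\oplus\mathbb R^{r,s})$, and the added direction has the sign for which $(e_0e_i)^2=-\epsilon_i$ matches $J_i^2$. This forces the extended form to be $\mathbb R^{r,s+1}$ or $\mathbb R^{s,r+1}$ up to sign conventions. Here one has to check that $\mathbf V\cap[\mathbf V,\mathbf V]=0$ in general, so that $\dim\mathbf L=\binom{r+s+1}{2}$. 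The exception is when $J_1\cdots J_{r+s}$ acts as a scalar and degenerates the map; that case is handled separately.

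The simplicity pattern then matches $\mathfrak{so}(p,q)$ with $p+q=r+s+1$. Such an algebra is commutative for $p+q=2$, semisimple but not simple for $p+q=4$, and simple otherwise. The case $p+q=4$ corresponds to $\mathfrak{so}(4)$ or $\mathfrak{so}(2,2)$, that is, $(r,s)=(3,0)$ or $(1,2)$. For $(2,1)$ and $(0,3)$ one gets $\mathfrak{so}(3,1)$, which is simple. This explains the exceptional list.

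\textbf{Item 4.} $\mathbf V\cap[\mathbf V,\mathbf V]$ is an ideal-like subspace. Bracketing it with $\mathbf V$ and $[\mathbf V,\mathbf V]$, and using $\mathbf V$-grading arguments, shows that the subalgebra it generates is a nonzero ideal of $\mathbf L$. Simplicity then forces this ideal to be all of $\mathbf L$, which gives $\mathbf V=[\mathbf V,\mathbf V]=\mathbf L$.

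\textbf{Item 5.} $[\mathbf V,\mathbf V]\subset\mathbf N$ by item 1. Given $A\in\mathbf N$, the map $\ad A|_{\mathbf V}$ preserves the form $\langle\cdot,\cdot\rangle$, since $J_Z^2=-\langle Z,Z\rangle\Id$ is preserved. Hence $\ad A|_{\mathbf V}\in\mathfrak{so}(r,s)$. By item 2 it coincides with $\ad B|_{\mathbf V}$ for a unique $B\in[\mathbf V,\mathbf V]$, and then $A-B\in\mathbf Z$. The sum is direct because an element of $[\mathbf V,\mathbf V]$ centralizing $\mathbf V$ is zero by injectivity of the spin representation. It commutes because $\mathbf Z$ centralizes $\mathbf V$, hence also $[\mathbf V,\mathbf V]$.

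\textbf{Main obstacle.} The hard step is item 3, precisely when $\mathbf V\subset[\mathbf V,\mathbf V]$, which happens when the pseudoscalar acts by a scalar (e.g.\ $(1,2)$, $(3,4)$ and others). There the dimension count changes and one must identify $\mathbf L$ directly. I would first treat that case by showing that $\mathbf L=[\mathbf V,\mathbf V]\cong\mathfrak{so}(r,s)$ and reading off its simplicity.
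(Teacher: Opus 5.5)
Items 1, 2, 4 and 5 are essentially correct (small remarks at the end), but item 3 has a genuine gap. You assert that $\mathbf V\cap[\mathbf V,\mathbf V]=0$ \emph{in general} without proving it. You then defer an exceptional case, which you characterize as $J_{Z_1}\cdots J_{Z_{r+s}}$ acting as a scalar, and plan to show there that $\mathbf L=[\mathbf V,\mathbf V]\cong\mathfrak{so}(r,s)$. Both the characterization and the planned step are wrong. On $\mathfrak v_{\min}$ for $\Cl(\mathbb R^{3,4})$ the pseudoscalar $J_{Z_1}\cdots J_{Z_7}$ equals $\pm\Id$. Yet $\mathbf V\cap[\mathbf V,\mathbf V]=0$, and $\mathbf L=\mathfrak{so}(\mathfrak v_{\min})\cong\mathfrak{so}(4,4)$ has dimension $28=7+21$, so $\mathbf L\neq[\mathbf V,\mathbf V]$. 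This agrees with the paper's remark that $\mathbf V$ is not a subalgebra for $N_{3,4}$. Isotypic modules of $\Cl(\mathbb R^{7,0})$ behave the same way, with $\mathbf L\cong\mathfrak{so}(8)$. A scalar pseudoscalar relates products of degrees $k$ and $r+s-k$, so it can push $\mathbf V$ into $[\mathbf V,\mathbf V]$ only when $r+s=3$.

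The missing idea is that no such check or case split is needed. The representation $J$ is an algebra homomorphism $\Cl(\mathbb R^{r,s})\to\mathrm{End}(\mathfrak v)$. Its restriction to the span $\Cl^1\oplus\Cl^2$ of products of degree one and two is a Lie algebra homomorphism onto $\mathbf L$.

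Under commutators this span is isomorphic to $\mathfrak{so}(r+1,s)$ via $e_i\mapsto e_0e_i$. The added direction must have the sign of the positive $Z_i$, because $(e_0e_i)^2=-e_0^2e_i^2$ has to equal $e_i^2$. Your alternative $\mathbb R^{r,s+1}$ would move the exceptional signatures to $(2,1)$ and $(0,3)$.

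The kernel of this homomorphism is an ideal of $\mathfrak{so}(r+1,s)$. If $r+s=2$, $r+s\ge 4$, or $(r,s)\in\{(2,1),(0,3)\}$, that algebra is simple. Then $\mathbf L\cong\mathfrak{so}(r+1,s)$ is simple, and $\mathbf V\cap[\mathbf V,\mathbf V]=0$ follows by counting dimensions. For $(3,0)$ and $(1,2)$, $\mathbf L$ is a quotient of the semisimple algebra $\mathfrak{so}(4)$ or $\mathfrak{so}(2,2)$, hence semisimple. Only here can the kernel be nonzero: one simple factor dies exactly when $J_{Z_1}J_{Z_2}J_{Z_3}=\pm\Id$, and then $\mathbf L=\mathbf V$.

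Two smaller points. In item 4, $W=\mathbf V\cap[\mathbf V,\mathbf V]$ is itself an ideal of $\mathbf L$: $[\mathbf V,W]\subset[\mathbf V,\mathbf V]\cap[\mathbf V,[\mathbf V,\mathbf V]]\subset W$, and similarly for $[[\mathbf V,\mathbf V],W]$. So no grading argument is needed. In item 2, injectivity follows at once because the operators $\ad(J_{Z_i}J_{Z_j})|_{\mathbf V}$ are nonzero multiples of the linearly independent matrices $\epsilon_jE_{ij}-\epsilon_iE_{ji}$. Your remark about the even part is inaccurate, since $\Cl^{\mathrm{even}}$ acts faithfully on every nonzero module, and it is not needed anyway.
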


\begin{theorem}\label{pr.natred.1}
A pseudo-Riemannian $H$-type nilmanifold $N_{r,s}$ is naturally reductive if and only if \linebreak $(r,s)\in\{(1,0),(0,1),(3,0),(1,2)\}$.
\end{theorem}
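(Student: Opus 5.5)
The argument rests on Ovando's criterion (Theorem~\ref{th:Ovanto}) together with the structure of $\mathbf L=\mathbf V+[\mathbf V,\mathbf V]$ from Proposition~\ref{pr.triple.1}. First, $J\colon\mathfrak z\to\mathfrak{so}(\mathfrak v)$ is injective, since $J_Z^2=-\langle Z,Z\rangle_{r,s}\Id$ and polarization give $J_ZJ_W+J_WJ_Z=-2\langle Z,W\rangle\Id$. If $J_Z=0$, this relation forces $\langle Z,W\rangle=0$ for every $W$, and non-degeneracy then gives $Z=0$. Hence Theorem~\ref{th:Ovanto} applies, and the metric is naturally reductive exactly when $\mathbf V$ is a Lie subalgebra and the induced maps $\tau_{Z_1}$ are skew-symmetric on $\mathfrak z$.

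For the ``if'' direction, the cases $(0,1)$ and $(1,2)$ are handled in Sections~\ref{sec.N01} and~\ref{sec.N12}. The Riemannian cases $(1,0)$ and $(3,0)$ are classical (Theorem~\ref{th.class.riemmannain}), but they can also be checked in the same way. For $(1,0)$, $\mathbf V$ is one-dimensional and abelian. For $(3,0)$, the relations $[J_{Z_i},J_{Z_j}]=2J_{Z_i}J_{Z_j}=\pm2J_{Z_k}$ hold, where the sign is fixed by $J_{Z_1}J_{Z_2}J_{Z_3}=\pm\Id$ on each irreducible summand. This holds on all of $\mathfrak v$ only if the module is isotypic. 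If it is not, we decompose it, note that the geodesic/isometry structure is compatible, and use the Riemannian result. The skew-symmetry of $\tau$ must also be checked. I would verify it directly from \eqref{eq:JJJ}: the quantity $\langle J_{\tau_{Z_1}Z_2},J_{Z_3}X\rangle$ can be rewritten via the trace form $\operatorname{tr}(J_ZJ_W)=-\dim\mathfrak v\,\langle Z,W\rangle$. We have $\langle\tau_{Z_1}Z_2,Z_3\rangle\propto\operatorname{tr}([J_{Z_1},J_{Z_2}]J_{Z_3})$, and this is alternating in $(Z_2,Z_3)$ by invariance of the trace.

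For the ``only if'' direction, assume $(r,s)\notin\{(1,0),(0,1),(3,0),(1,2)\}$ and that $N_{r,s}$ is naturally reductive. Then condition (i) gives $[\mathbf V,\mathbf V]\subset\mathbf V$, so $\mathbf L=\mathbf V$. By Proposition~\ref{pr.triple.1}(3), $\mathbf L$ is simple in this range. For $r+s\ge2$ and $\mathbf V$ closed under brackets, $[\mathbf V,\mathbf V]\ne0$, because $[J_{Z},J_W]=2J_ZJ_W\neq0$ for orthogonal non-null $Z,W$. Therefore $\mathbf V\cap[\mathbf V,\mathbf V]=[\mathbf V,\mathbf V]\neq0$, and part~4 gives $\mathbf V=[\mathbf V,\mathbf V]$. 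Comparing dimensions with part~2 yields $r+s=\dim\mathfrak{so}(r,s)=\tfrac{(r+s)(r+s-1)}{2}$, so $r+s=3$. The remaining signatures are $(2,1)$ and $(0,3)$; the cases $(3,0)$ and $(1,2)$ are excluded by hypothesis. In these two signatures I would show directly that $\mathbf V$ is not closed under brackets. For orthonormal $Z_1,Z_2,Z_3$, the element $[J_{Z_1},J_{Z_2}]=2J_{Z_1}J_{Z_2}$ lies in $\mathbf V$ only if $J_{Z_1}J_{Z_2}=cJ_{Z_3}$. Squaring both sides gives $-\langle Z_1,Z_1\rangle\langle Z_2,Z_2\rangle\Id=-c^2\langle Z_3,Z_3\rangle\Id$, so $c^2=\langle Z_1,Z_1\rangle\langle Z_2,Z_2\rangle\langle Z_3,Z_3\rangle$. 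This product is $-1$ in signatures $(2,1)$ and $(0,3)$, so no real $c$ exists, a contradiction.

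The main obstacle is the dimension bookkeeping, in particular making sure that part~4 of Proposition~\ref{pr.triple.1} applies, which needs $[\mathbf V,\mathbf V]\ne0$ and $\mathbf L$ simple. The small case $r+s=2$ also has to be covered: there $\mathbf L$ is simple or semisimple according to the list in part~3, and $\dim\mathfrak{so}(r,s)=1\neq2$ already gives the contradiction. The case $r+s=1$ is exactly $(1,0)$ or $(0,1)$, which are naturally reductive.
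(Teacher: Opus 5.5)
Your ``only if'' direction is correct. The ``if'' direction, however, contains a step that cannot work. For $(3,0)$ you observe that $[J_{Z_1},J_{Z_2}]=2J_{Z_1}J_{Z_2}=\pm2J_{Z_3}$, with the sign fixed by $J_{Z_1}J_{Z_2}J_{Z_3}=\pm\Id$. For a non-isotypic module you then propose to ``decompose it, note that the geodesic/isometry structure is compatible, and use the Riemannian result.'' No such mechanism exists. Natural reductivity does not pass from summands to an orthogonal sum; Proposition~\ref{co_totg_2} goes the other way, and only for the GO property.

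In fact the desired conclusion is false there, with respect to $N\rtimes H$, which is the setting of Theorem~\ref{th:Ovanto}. That theorem is an equivalence. On $\mathfrak v=V_+\oplus V_-$ your own computation gives $[J_{Z_1},J_{Z_2}]=(-2J_{Z_3})\oplus(2J_{Z_3})$. This is not $J_W$ for any $W$: the first component forces $W=-2Z_3$, and then the second component has the wrong sign. So $\mathbf V$ is not a subalgebra, and such an $N_{3,0}$ is not naturally reductive.

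The same happens for $(1,2)$. Twist $\mathfrak v_{\min}$ by the isometry $Z_1\mapsto -Z_1$ of $\mathbb R^{1,2}$, i.e.\ use the new action $J'_Z:=J_{\sigma(Z)}$ on the same space and scalar product. This is an admissible module with $J_{Z_1}J_{Z_2}J_{Z_3}=-\Id$. On its orthogonal sum with $\mathfrak v_{\min}$ one gets $[J_{Z_1},J_{Z_2}]=2J_{Z_3}\oplus(-2J_{Z_3})\notin\mathbf V$.

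Hence the ``if'' direction can only be proved for isotypic modules, i.e.\ those with $J_{Z_1}J_{Z_2}J_{Z_3}=\pm\Id$ on all of $\mathfrak v$. Alternatively, read the statement existentially, so that one suitable module per listed signature suffices. State that restriction explicitly rather than try to repair the non-isotypic case. The paper's proof passes over the same point. It cites Theorem~\ref{th.class.riemmannain} for $(3,0)$, and for $(1,2)$ it relies on the assertion in Section~\ref{sec.N12} that $J_{Z_1}J_{Z_2}J_{Z_3}=\Id$ on every admissible module. Your added decomposition argument, however, is an invalid step, not merely a citation.

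Your ``only if'' argument follows the paper up to $r+s=3$. Condition (i) gives $\mathbf L=\mathbf V$; simplicity and part~4 of Proposition~\ref{pr.triple.1} give $\mathbf V=[\mathbf V,\mathbf V]$; the dimension count does the rest. You differ in how you exclude $(2,1)$ and $(0,3)$.
\begin{itemize}
\item The paper cites the earlier non-GO results for $N_{2,1}(\mathfrak v_{\min})$ and $N_{0,3}(\mathfrak v_{\min})$ and extends them to all modules via Proposition~\ref{co_totg_2}. This yields the stronger fact that these groups are not even GO, at the cost of importing heavier external results.
\item You show directly that $J_{Z_1}J_{Z_2}=cJ_{Z_3}$ would force $c^2=\langle Z_1,Z_1\rangle\langle Z_2,Z_2\rangle\langle Z_3,Z_3\rangle=-1$. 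This is self-contained, uniform in the module, and stays inside Ovando's framework.
\end{itemize}
To complete your step, say why $J_{Z_1}J_{Z_2}\in\mathbf V$ forces $J_{Z_1}J_{Z_2}=cJ_{Z_3}$. It commutes with $J_{Z_3}$, whereas for $W=\sum c_iZ_i$ one has $[J_W,J_{Z_3}]=2(c_1J_{Z_1}+c_2J_{Z_2})J_{Z_3}$. This vanishes only if $c_1=c_2=0$, since $J_{Z_3}$ is invertible and $J$ is injective.
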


\begin{proof}
We know that $N_{1,0}$, $N_{0,1}$, $N_{3,0}$, and $N_{1,2}$  are naturally reductive, see Theorem~\ref{th.class.riemmannain} and Sections~\ref{sec.N01},~\ref{sec.N12}.

If $r+s =1$, then a pseudo-Riemannian $H$-type nilmanifold is isometric to $N_{1,0}$ or to $N_{0,1}$ (independently of being naturally reductive or not).

Suppose that $N_{r,s}$ is naturally reductive and $r+s >1$.
Then the linear space $\mathbf{V}=J(\mathfrak{z})$ is a Lie subalgebra in
$\mathbf{L}= \mathbf{V}+[\mathbf{V},\mathbf{V}]$. Since $[\mathbf{V},[\mathbf{V},\mathbf{V}]] \subset \mathbf{V}$,
then $\mathbf{V}$ is an ideal in $\mathbf{L}$.

If $(r,s)\not\in \{(3,0),(1,2)\}$, then $\mathbf{L}$ is simple and $\mathbf{L}=[\mathbf{V},\mathbf{V}]=\mathbf{V}$ by Proposition~\ref{pr.triple.1}.
Since $\dim(\mathbf{V}) = r+s$ and $\dim([\mathbf{V},\mathbf{V}]) = (r+s)(r+s-1)/2$, we obtain $r+s = 3$ which implies
$(r,s) \in \{(2,1),(0,3)\}$.
The final observation is the following: it was shown in~\cite[Sections 4.5, 4.6]{FuMaNik25} that
$N_{2,1}(\mathfrak v_{\min})$ and $N_{0,3}(\mathfrak v_{\min})$, constructed from minimal admissible modules $\mathfrak v_{\min}$, are not geodesic orbit.
Proposition~\ref{co_totg_2} from Section~\ref{se.tot.geod} extends this result to $N_{2,1}(\mathfrak v)$ and $N_{0,3}(\mathfrak v)$,
where $\mathfrak v$ is an arbitrary admissible module.
We therefore conclude that $N_{2,1}(\mathfrak v)$ and $N_{0,3}(\mathfrak v)$ are not naturally reductive.
\end{proof}

\section{Pseudo-Riemannian $H$-type nilmanifolds $N_{r,s}$, $s>0$, $(r,s)\notin \{(0,1),(1,2),(3,4)\}$}\label{sec:rs-positive}

\subsection{On totally geodesic submanifolds of geodesic orbit pseudo-Riemannian manifolds}\label{se.tot.geod}

There is a close relation between the geodesic orbit submanifolds and totally geodesic submanifolds.
P.~Eberlein~\cite{Eber94.1, Eber94.2} studied totally geodesic subalgebras and totally
geodesic subgroups of non-singular 2-step nilpotent Lie groups endowed with left-invariant Riemannian metrics.
The pseudo $H$-type Lie algebras are examples of non-singular 2-step nilpotent Lie algebras.
It was proved in~\cite[Theorem 11]{BerNik08} that
every closed totally geodesic submanifold of a
GO Riemannian manifold is
GO itself. A version of these results for pseudo-Riemannian manifolds was proved in~\cite[Section 5]{FuMaNik25}. We state it here for the completeness.

\begin{theorem}[Theorem 5~\cite{FuMaNik25}]\label{th_ps_totgeod}
Every geodesically complete totally geodesic submanifold of a
geodesic orbit pseudo-Riemannian manifold is
geodesic orbit itself.
\end{theorem}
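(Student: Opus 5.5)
The plan is to adapt the Riemannian argument of Berestovskii–Nikonorov (\cite[Theorem 11]{BerNik08}) to the indefinite setting. For null geodesics we use the extra affine freedom of Proposition~\ref{pr.geodlem1}. Let $(M,g)$ be a GO pseudo-Riemannian manifold and $F\subset M$ a geodesically complete totally geodesic submanifold. We fix $p\in F$ and $\xi\in T_pF$, and let $\gamma$ be the geodesic of $M$ with $\gamma(0)=p$, $\dot\gamma(0)=\xi$. Since $F$ is totally geodesic and complete, $\gamma(\mathbb R)\subset F$, and $\gamma$ is also the $F$-geodesic with this initial vector. By the GO property of $M$ there is a one-parameter isometry group $\phi_t$ of $M$ with $\phi_t(p)=\gamma(\lambda(t))$, where $\lambda$ is an affine reparametrization ($\lambda(t)=t$ if $\xi$ is non-null; in the null case $\lambda(t)=(e^{kt}-1)/k$ or $t$, as encoded by $k$ in \eqref{geodlem1}).

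The key step is to produce from $\phi_t$ an isometry group of $F$ whose orbit is still $\gamma$. First we show that $\phi_t$ preserves $F$. Because $\phi_t$ is an isometry, $\phi_t(F)$ is a complete totally geodesic submanifold through $\gamma(\lambda(t))\in F$, with tangent space $d\phi_t(T_pF)$ there. The main obstacle is that $d\phi_t(T_pF)$ need not equal $T_{\gamma(\lambda(t))}F$. For this reason we would not try to use $\phi_t$ directly. Instead we pass to the group $G_F=\{\psi\in \mathrm{Isom}(M):\psi(F)=F\}$. Its restriction to $F$ lands in $\mathrm{Isom}(F)$, and we work infinitesimally via Killing fields.

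Let $X$ be the Killing field of $\phi_t$ on $M$. Along $\gamma$, decompose $X|_\gamma$ into components tangent and normal to $F$; the decomposition makes sense because $F$ is non-degenerate, being a pseudo-Riemannian submanifold. The tangent part along $\gamma$ is determined by the Killing field equation. Since $F$ is totally geodesic, the Jacobi-field equations along $\gamma$ split into a tangential and a normal part, because the curvature operator $R(\cdot,\dot\gamma)\dot\gamma$ preserves $TF$. The tangential component of a Killing field restricted to $\gamma$ is therefore a Jacobi field of $F$ along $\gamma$. The same holds with $\gamma$ replaced by any geodesic in $F$.

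To get a genuine Killing field on $F$, we define $X^F$ on $F$ as the orthogonal projection $X^\top$ of $X|_F$. We then check the Killing equation $\langle\nabla^F_U X^\top,U\rangle=0$ for $U\in TF$. Using $\nabla^F_U X^\top=(\nabla_U X)^\top-(\nabla_U X^\perp)^\top$, the first term satisfies the equation because $X$ is Killing. The second term is $\langle\nabla_U X^\perp,U\rangle=-\langle X^\perp,\nabla_U U\rangle$, and this vanishes for geodesic extensions of $U$ because $F$ is totally geodesic (second fundamental form zero). Hence $X^\top$ is a Killing field of $F$. Since $F$ is complete, its flow $\psi_t$ is a one-parameter isometry group of $F$.

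It remains to show that the orbit of $\psi_t$ through $p$ is $\gamma$, up to the same affine reparametrization. Along $\gamma$, $X=\lambda'(t)\dot\gamma$ is tangent to $F$, so $X^\top=X$ on $\gamma$. Consequently $\gamma\circ\lambda$ is an integral curve of $X^\top$. Thus every geodesic of $F$ through $p$ is an orbit (affinely reparametrized in the null case) of a one-parameter isometry group of $F$. This is exactly the GO property in the sense of Definition~\ref{def:GO-pseudo} and the Dušek–Kowalski modification.
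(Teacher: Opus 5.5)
Your proof is correct and follows essentially the same route as the argument the paper defers to (\cite[Section 5]{FuMaNik25}, adapting \cite[Theorem 11]{BerNik08}): project the ambient Killing field $X$ orthogonally onto the non-degenerate totally geodesic $F$, use the vanishing second fundamental form to see that $X^\top$ is Killing on $F$, and observe that $X^\top=X$ along $\gamma\circ\lambda$, so the null reparametrization carries over unchanged. The paragraphs about $\phi_t$ preserving $F$, the group $G_F$ and the Jacobi-field splitting are never used and can be dropped; the step ``$X^\top$ generates a one-parameter isometry group of $F$'' should explicitly invoke the standard fact that Killing fields on a geodesically complete pseudo-Riemannian manifold are complete (see, e.g., \cite{ONeill} or Kobayashi--Nomizu).
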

Note that any GO pseudo-Riemannian manifold is geodesically complete.
Theorem~\ref{th_ps_totgeod} will be useful for us in the following reformulation.

\begin{prop}\label{co_totg_2}
Let $\big(N_{r,s}(U),g_{N_{r,s}(U)}\big)$ be a pseudo-Riemannian $H$-type nilmanifold, where $U$ is an admissible module of the representation $J\colon\Cl(\mathbb R^{r,s}) \to \operatorname{End} (U)$. Let $V$ be an admissible submodule of $U$ with respect to the restriction $g_{N_{r,s}(V)}$ of the metric. If $\big(N_{r,s}(U),g_{N_{r,s}(U)}\big)$ is geodesic orbit, then
a pseudo-Riemannian $H$-type nilmanifold $\big(N_{r,s}(V),g_{N_{r,s}(V)}\big)$ is also geodesic orbit.
\end{prop}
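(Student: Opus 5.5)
The plan is to apply Theorem~\ref{th_ps_totgeod}. For that we need to show that $N_{r,s}(V)$ embeds in $N_{r,s}(U)$ as a geodesically complete, totally geodesic submanifold, and that the induced metric on it is exactly $g_{N_{r,s}(V)}$.

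First, the subalgebra. Set $\mathfrak n'=\mathfrak z\oplus V\subset\mathfrak n=\mathfrak z\oplus U$. Since $V$ is a submodule, $J_Z(V)\subset V$ for every $Z\in\mathfrak z$. Since $V$ is admissible, the restriction of $\langle\cdot\,,\cdot\rangle_U$ to $V$ is non-degenerate. Hence $U=V\oplus V^\perp$, and $V^\perp$ is also $J$-invariant, because each $J_Z$ is skew-symmetric. For $X,Y\in V$ and $Z\in\mathfrak z$ we have $\langle [X,Y],Z\rangle=\langle J_Z X,Y\rangle_V$. This shows that $\mathfrak n'$ is a subalgebra and that its bracket, computed in $\mathfrak n$, is precisely the pseudo $H$-type bracket built from the module $V$ via \eqref{eq:JJ}. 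Moreover the restricted scalar product is $\langle\cdot\,,\cdot\rangle_{r,s}+\langle\cdot\,,\cdot\rangle_V$. Consequently the connected subgroup $N'\subset N_{r,s}(U)$ with Lie algebra $\mathfrak n'$, carrying the induced left-invariant metric, is isometric to $N_{r,s}(V)$. It is closed and simply connected, since it is the image of $\mathfrak n'$ under the exponential map of a simply connected nilpotent group.

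Next, total geodesicity. The cleanest route is to exhibit $N'$ as a connected component of the fixed-point set of an isometry, since such components are automatically totally geodesic. Define $\sigma=(\Id_{\mathfrak z},\,\Id_V\oplus(-\Id_{V^\perp}))$. It lies in $O(\mathfrak z)\times O(U)$ and commutes with every $J_Z$, because both $V$ and $V^\perp$ are $J$-invariant. By \eqref{isotgr1}, $\sigma$ is therefore an isometric automorphism of $\mathfrak n$, and hence of $N_{r,s}(U)$. Its fixed subalgebra is $\mathfrak z\oplus V=\mathfrak n'$. The fixed-point set of $\sigma$ in the simply connected nilpotent group is $\exp(\mathfrak n')=N'$, because $\exp$ is a diffeomorphism intertwining $\sigma$ with its differential. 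Thus $N'$ is totally geodesic.

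Finally, completeness. $N'$ is a homogeneous pseudo-Riemannian manifold under its own left translations, but homogeneity alone does not imply completeness in the indefinite case. Instead, since $N'$ is totally geodesic and closed, every geodesic of $N'$ is a geodesic of $N_{r,s}(U)$ that stays in $N'$. The ambient manifold is GO by hypothesis, hence geodesically complete, so such a geodesic extends to all of $\mathbb R$. By total geodesicity and closedness of the fixed-point set, the extension stays in $N'$, so $N'$ is geodesically complete. Theorem~\ref{th_ps_totgeod} then shows that $N'\cong N_{r,s}(V)$ is geodesic orbit.

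The only point requiring care is the non-degeneracy of $V$, which is what gives $V^\perp$ as an invariant complement and makes $\sigma$ well defined. This is exactly what the admissibility assumption on the submodule provides, so no real obstacle is expected.
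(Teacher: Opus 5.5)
Your proof is correct, and its outer structure matches the paper's: realize $N_{r,s}(V)$ as a totally geodesic subgroup of $N_{r,s}(U)$, then apply Theorem~\ref{th_ps_totgeod}. The difference lies in how you obtain total geodesicity. The paper simply cites \cite[Theorem~5]{FuMaNik25}. You instead give a self-contained argument: you exhibit $N'=\exp(\mathfrak z\oplus V)$ as the full fixed-point set of the isometric automorphism $\sigma=(\Id_{\mathfrak z},\Id_V\oplus(-\Id_{V^\perp}))\in H$. Uniqueness of geodesics ($\sigma\circ\gamma=\gamma$ for an ambient geodesic $\gamma$ tangent to $N'$) then shows at once that such geodesics stay in $N'$ on their whole domain. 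A more computational alternative would use the Levi-Civita formulas $\nabla_XY=\frac12[X,Y]$, $\nabla_XZ=\nabla_ZX=-\frac12J_ZX$, $\nabla_ZZ'=0$. These show directly that $\mathfrak z\oplus V$ is closed under $\nabla$, since $J_Z(V)\subset V$. Your involution argument has an extra benefit: it explicitly establishes geodesic completeness of $N'$, inherited from the completeness of the GO ambient manifold. Completeness is a hypothesis of Theorem~\ref{th_ps_totgeod} that the paper's two-line proof leaves implicit. The paper's version is shorter only because it delegates the geometric step to the earlier reference.
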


\begin{proof}
The manifold $\big(N_{r,s}(V),g_{N_{r,s}(V)}\big)$  is a totally geodesic submanifold of $\big(N_{r,s}(U),g_{N_{r,s}(U)}\big)$ by~\cite[Theorem~5]{FuMaNik25}.
Applying Theorem~\ref{th_ps_totgeod}, we finish the proof.
\end{proof}

\subsection{Proof of the last statement in Theorem~\ref{th.class.pseudo}}

We recall the main result obtained in~\cite{FuMaNik25}.

\begin{theorem}\label{th.class.pseudo-0}
Let $N_{r,s}(\mathfrak v_{\min})$ be an $H$-type group, where $(r,s)$, $s\geq 1$, is the signature of the left invariant pseudo-Riemannian metric restricted to the centre of the group. Let $\mathfrak{n}_{r,s}=\mathfrak{z}\oplus \mathfrak{v}_{\min}$ be the Lie algebra of $N_{r,s}$, where $\mathfrak z$ is the centre and $\mathfrak v_{\min}$ is a minimal admissible module for the Clifford algebra $\Cl(\mathbb R^{r,s})$.
Then $N_{r,s}(\mathfrak v_{\min})$ with $(r,s)\notin \{(0,1), (1,2), (3,4)\}$ is not geodesic orbit pseudo-Riemannian manifold.
\end{theorem}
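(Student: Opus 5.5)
The plan is to work directly with the criterion of Proposition~\ref{gonil1n.2}. For each signature $(r,s)$ with $s\ge1$ and $(r,s)\notin\{(0,1),(1,2),(3,4)\}$, I would exhibit a pair $(Z,X)\in\mathfrak z\times\mathfrak v_{\min}$ for which no $D=(C,A)\in\mathfrak h$ and no $k\in\mathbb R$ satisfy $(C+k\Id)(Z)=0$ and $(A+k\Id)(X)=J_Z(X)$. I would take the total vector $T=Z+X$ non-null, so that $k=0$ by the Remark after Proposition~\ref{pr.geodlem1}. The problem then becomes purely linear: find $Z,X$ such that $J_ZX\notin\{AX : A\in\mathbf N,\ C_A(Z)=0\}$.

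\textbf{Step 1: describe the relevant $A$.} By Proposition~\ref{pr.triple.1}(5), $\mathbf N=[\mathbf V,\mathbf V]\oplus\mathbf Z$. The centralizer part $\mathbf Z$ acts trivially on $\mathfrak z$, so it gives $C=0$. The part $[\mathbf V,\mathbf V]\cong\mathfrak{so}(r,s)$ is spanned by the operators $J_{W_1}J_{W_2}$ with $W_1\perp W_2$, and it induces on $\mathfrak z$ the standard action of $\mathfrak{so}(r,s)$. Hence, for non-null $Z$, the condition $C(Z)=0$ cuts this part down to the stabilizer $\mathfrak{so}(Z^\perp)$. That stabilizer is spanned by $J_{W_1}J_{W_2}$ with $W_1,W_2\in Z^\perp$.

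So the admissible set is
$$
\mathcal A_Z X=\spann\{J_{W_1}J_{W_2}X : W_1,W_2\perp Z\}+\mathbf Z X .
$$
The centralizer $\mathbf Z$ is the skew-symmetric part of the commutant of $J(\Cl(\mathbb R^{r,s}))$ in $\operatorname{End}(\mathfrak v_{\min})$. It is determined by the type $\mathbb R$, $\mathbb C$, $\mathbb H$ (or a doubling) of $\Cl(\mathbb R^{r,s})$, and hence by $r-s \bmod 8$ together with whether $\mathfrak v_{\min}$ is irreducible or the sum $V_+\oplus V_-$.

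\textbf{Step 2: an obstruction vector.} To show $J_ZX\notin\mathcal A_ZX$, I would look for a vector $Y$ orthogonal to $\mathcal A_ZX$ with $\langle Y,J_ZX\rangle\ne0$; the natural first try is $Y=J_ZX$ itself, suitably corrected. The pairings are
$$
\langle J_{W_1}J_{W_2}X,J_ZX\rangle=\langle X,J_{W_2}J_{W_1}J_ZX\rangle .
$$
By Clifford relations combined with \eqref{eq:JJJ}, these vanish for orthonormal $W_1,W_2,Z$ unless $J_{W_1}J_{W_2}J_Z$ has a component acting as a scalar on the relevant isotypic piece. That happens only in the $r+s=3$ cases, such as $(2,1)$ and $(0,3)$. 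Similarly, $\langle BX,J_ZX\rangle$ for $B\in\mathbf Z$ can be controlled, because $J_Z^{-1}B$ lies in the commutant and is a sum of symmetric and skew parts.

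In the generic case $r+s\ge4$, I would therefore pick $X$ so that the skew commutant elements composed with $J_Z$ pair trivially with $X$, and choose $\langle X,X\rangle\ne0$. I expect a mixed vector, for instance one built from both $V_+$ and $V_-$ components, to be needed to kill the commutant contributions. If the naive $Y=J_ZX$ fails, I would subtract its projection onto the small space $\mathbf ZX$.

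\textbf{Step 3: small and exceptional cases.} The cases where $\mathbf V$ closes up under products, namely $r+s\le4$ (such as $(2,1)$, $(0,3)$, $(0,2)$, $(1,1)$) and the split signatures $s=r+1\pmod 4$ neighbouring $(3,4)$, are where triple products $J_{W_1}J_{W_2}J_Z$ are proportional to the identity or to a commutant element. There I would fix an explicit orthonormal basis of $\mathfrak v_{\min}$ generated from one vector $v$ by monomials in the $J_{Z_i}$, as in \eqref{eq:basis12}. I would then solve the linear system for $A$ directly, and if every non-null choice of $T$ turns out to be homogeneous, resort to a null $T$ with $k\ne0$ allowed.

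I expect this case work, especially controlling the commutant $\mathbf Z$ uniformly across the eight Clifford types and ensuring the chosen $X$ is not accidentally in $\mathcal A_ZX$, to be the main obstacle. I would organize it by $r-s\bmod 8$ and use Clifford periodicity to reduce to a bounded list of verifications.
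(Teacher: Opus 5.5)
\paragraph{Comparison with the paper.} The paper does not prove this statement. It quotes it as the main result of \cite{FuMaNik25} and uses it only as input for the totally geodesic argument of Proposition~\ref{co_totg_2}. So your proposal has to stand on its own, and it has a genuine gap.

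\paragraph{The vanishing claim in Step~2 is false.} Step~1 is fine: by Proposition~\ref{pr.triple.1}(5), for non-null $Z$ and $k=0$ the admissible operators are $\spann\{J_{W_1}J_{W_2}:W_1,W_2\perp Z\}\oplus\mathbf Z$. The problem is Step~2. Take mutually orthogonal non-null $Z,W_1,W_2$ and set $S=J_{W_1}J_{W_2}J_Z$.
\begin{itemize}
\item $S$ is self-adjoint: $S^{\tau}=-J_ZJ_{W_2}J_{W_1}=S$, since reversing three anticommuting factors costs a sign.
\item $S^2=\langle W_1,W_1\rangle\langle W_2,W_2\rangle\langle Z,Z\rangle\Id\neq0$.
\item By polarization, the quadratic form $X\mapsto\langle SX,X\rangle=-\langle J_{W_1}J_{W_2}X,J_ZX\rangle$ is therefore not identically zero. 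This holds in every signature with $r+s\ge3$, whether or not $S$ is scalar.
\end{itemize}
The identity \eqref{eq:JJJ} only kills pairings $\langle J_WX,J_{W'}X\rangle$ of single $J$'s. It says nothing about pairings with the quadratic elements that make up $\mathfrak h_Z$. Concretely, in $N_{3,4}(\mathfrak v_{\min})$ relation \eqref{eq:1=2} gives $J_{Z_5}J_{Z_6}v=J_{Z_1}v$, so $\langle J_{Z_5}J_{Z_6}v,J_{Z_1}v\rangle=1$.

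In fact your claim proves too much. For $(3,4)$ the minimal module is irreducible of real type, so $\mathbf Z=0$, which is why $A$ is a combination of the $J_{Z_i}J_{Z_k}$ in Section~\ref{sec:34min}. Your vanishing would then make $Y=J_ZX$ orthogonal to $\mathcal A_ZX$, while $\langle Y,J_ZX\rangle=\langle Z,Z\rangle\langle X,X\rangle\neq0$. That would show $N_{3,4}(\mathfrak v_{\min})$ is not GO, contradicting Theorem~\ref{th:34min}.

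\paragraph{What remains is the whole theorem.} Once this claim is dropped, the plan does not supply the actual content. For each of infinitely many signatures you must exhibit $Z,X$ and some $Y\perp\mathcal A_ZX$ with $\langle Y,J_ZX\rangle\neq0$. For instance, you would need a non-null $X$ on which all $\binom{r+s-1}{2}$ quadratic forms above and the commutant terms vanish simultaneously. Nothing in the proposal shows such data exist. Nothing identifies what separates $(3,4)$, where they cannot exist, from $(4,3)$, $(2,5)$, $(5,2)$, $(1,6)$, and so on.

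Clifford periodicity does not reduce this to finitely many checks. It governs the module type and $\mathbf Z$, but not $\mathfrak{so}(Z^\perp)$ and its orbits on $\mathfrak v_{\min}$, which change with $r+s$. You would need a genuine reduction, for example to totally geodesic subgroups with smaller centre, together with explicit base cases.

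\paragraph{Restricting to non-null vectors.} Confining yourself to non-null $T$ is a real restriction here. The isotropy group is non-compact, so the set of homogeneous initial vectors is not a priori closed. The paper's own non-GO proof for $N_{3,4}(\mathfrak v_{\min}\oplus\mathfrak v_{\min})$ in fact uses an initial vector all of whose components are null. Your fallback to null $T$ with $k\neq0$ is left unspecified.
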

{\bf Proof of the last statement of Theorem~\ref{th.class.pseudo}.}
Let $N_{r,s}\big(\oplus_{j=1}^l(\mathfrak{v}_{\min})_j\big)$ be a pseudo-Riemannian $H$-type nilmanifold with $(r,s)\notin \{(0,1), (1,2), (3,4)\}$. Then $N_{r,s}(\mathfrak v_{\min})$ is a totally geodesic submanifold of $N_{r,s}\big(\oplus_{j=1}^l(\mathfrak{v}_{\min})_j\big)$ by  Proposition~\ref{co_totg_2}.
Theorem~\ref{th.class.pseudo-0} implies that $N_{r,s}\big(\oplus_{j=1}^l(\mathfrak{v}_{\min})_j\big)$ can not be geodesic orbit manifold since $N_{r,s}(\mathfrak v_{\min})$ is not geodesic orbit. This proves the last statement of Theorem~\ref{th.class.pseudo}.
\ {\tiny $\blacksquare$}

\section{Pseudo-Riemannian $H$-type nilmanifold ${N}_{3,4}$}\label{sec.N3.4}

In the present section we finish the proof of Theorem~\ref{th.class.pseudo}.

\subsection{Pseudo-Riemannian $H$-type nilmanifold ${N}_{3,4}(\mathfrak v_{\min})$}\label{sec:34min}

It is known that the pseudo-Riemannian $H$-type nilmanifold ${N}_{3,4}(\mathfrak v_{\min})$,
where the complement to the centre is the minimal admissible module is a geodesic orbit manifold, see~\cite[Theorem~14]{FuMaNik25}.
We present here an alternative proof of the following theorem, which gives an additional information about the dimension of free parameters
in the choice of homogeneous geodesics.

\begin{theorem}\label{th:34min}
The space $N_{3,4}(\mathfrak v_{\min})$ is a geodesic orbit manifold.
Each geodesic is the orbit of a one-parameter isometry group. For every initial
vector the generators of such groups form a non-empty affine subspace, which is
eight-dimensional for a generic initial vector.
\end{theorem}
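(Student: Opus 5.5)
The plan is to verify the criterion of Proposition~\ref{gonil1n.2} directly. For every pair $(Z,X)\in\mathfrak z\times\mathfrak v_{\min}$ we must exhibit $D=(C,A)\in\mathfrak h$ and $k\in\mathbb R$ with $(C+k\Id)Z=0$ and $(A+k\Id)X=J_Z X$, and then describe the whole solution set. We work in $\mathfrak v_{\min}\cong\mathbb R^{4,4}$ for $\Cl(\mathbb R^{3,4})$. As in~\eqref{eq:basis12}, we choose $v$ with $J_{Z_1}\cdots J_{Z_7}v=v$ and $\langle v,v\rangle=1$. We then build an orthonormal basis $v, J_{Z_i}v$ ($i=1,\dots,7$), using that $\mathfrak v_{\min}$ is the split-octonion module: $J_{Z_i}J_{Z_j}v$ is a multiple of some $J_{Z_k}v$, following the split Cayley multiplication table.

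By Proposition~\ref{pr.triple.1}(5), the Lie algebra $\mathfrak h$ is identified with $\mathbf N=[\mathbf V,\mathbf V]\oplus\mathbf Z$, where $[\mathbf V,\mathbf V]\cong\mathfrak{so}(3,4)$. We expect the centralizer $\mathbf Z$ to vanish on the minimal module, and the spin representation of $\mathfrak{so}(3,4)$ on $\mathbb R^{4,4}$ to be the relevant action. Thus $\mathfrak h\cong\mathfrak{so}(3,4)$ acts on $\mathfrak z=\mathbb R^{3,4}$ by the vector representation ($C$) and on $\mathfrak v$ by the spin representation ($A$). The count will then be $\dim\mathfrak h=21$. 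The stabilizer of a generic $Z$ is $\mathfrak{so}(3,3)$ or $\mathfrak{so}(2,4)$, of dimension $15$. Spin restricted to $\mathfrak{so}(7-1)$ decomposes as $4+4$, and the stabilizer of a generic $X$ in a $\mathfrak{so}(6)$-type form acting on $\mathbb R^8$ has dimension $15-7=8$. This is the expected source of the eight-dimensional affine space in the statement.

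The key steps are as follows.

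First, for non-null $Z$ (so $k$ can be taken $0$ whenever $J_ZX$ is non-null; in general $k$ is forced only when $\langle X,X\rangle=0=\langle Z,Z\rangle$), we need $A\in\mathfrak{stab}(Z)$ with $AX=J_ZX$. Since $\mathfrak{stab}(Z)\cong\mathfrak{so}(Z^\perp)$ acts on $\mathfrak v$ via spin, and $J_Z$ commutes with it (up to the sign convention), we must show that $J_ZX$ lies in the orbit tangent $\mathfrak{stab}(Z)\cdot X$. For this we use transitivity: $\mathrm{Spin}(Z^\perp)$ acts on each quadric level $\{\langle X,X\rangle=c\}$ of $\mathbb R^{4,4}$ with open orbits. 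The tangent space is then $X^\perp$, and $J_ZX\perp X$ by skew-symmetry. This is Kaplan's Riemannian argument for $m=7$, transplanted to split signature.

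Second, we treat degenerate configurations. These are $X$ null, $Z$ null, or $X$ lying in lower-dimensional orbits. Here we explicitly use the basis and the freedom in $k$. By $G_2^{*}$-type symmetry (the isotropy of $v$ is $\mathfrak g_2^{*}$), we reduce $X$ to normal forms such as $v$, $v+J_{Z_1}v$ (null), and similarly reduce $Z$ relative to $X$. For each normal form we solve a small linear system. If the system fails for $k=0$, we use the homothety $k\Id$, which lies in $\mathfrak h$ only in combination with the geodesic lemma constant. For null pairs we choose $k$ so that $J_ZX-kX$ lands in $\mathfrak{stab}(Z)$-image.

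Third, we compute the dimension of the solution set. The solutions form a coset of $\{D:CZ=0,\ AX=0\}$. Generically this is $\mathfrak{stab}(Z)\cap\mathfrak{stab}(X)$, of dimension $8$ (e.g. $\mathfrak{su}$-type or $\mathfrak{sl}(3)$-type, both of dimension $8$). This confirms the final claim.

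The main obstacle is the second step, namely the null and degenerate orbits. There the transitivity argument breaks down, and the tangent space to the orbit can be a proper subspace of $X^\perp$. These cases require the explicit split-octonion computations with the parameter $k$, and we expect a case analysis over the finitely many $\mathrm{Spin}(3,4)$-orbit types of pairs $(Z,X)$.
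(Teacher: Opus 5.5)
Your treatment of a non-null central component $Z$ is correct, and it takes a different route from the paper. There $k=0$ is forced because $\langle CZ,Z\rangle=0$, so one needs $J_ZX\in\mathfrak{so}(Z^\perp)\cdot X$. For $\langle Z,Z\rangle=-1$, $\mathfrak{so}(3,3)\cong\mathfrak{sl}(4,\mathbb R)$ acts on $\mathbb R^4\oplus(\mathbb R^4)^*$, the $\pm1$-eigenspaces of $J_Z$. For $\langle Z,Z\rangle=1$, $\mathfrak{so}(2,4)\cong\mathfrak{su}(2,2)$ acts on $\mathbb C^{2,2}$ with $J_Z$ acting as $i$; this module is irreducible over $\mathbb R$, not $4+4$. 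In both cases the orbit of a non-null $X$ is open in its level set, so $J_ZX\in X^\perp$ is attained, and your generic count $15-7=8$ is right. The paper instead normalizes to the nine cases \eqref{all n}. It then solves explicit linear systems in the coefficients $a_{i,k}$ of $A=\sum a_{i,k}J_{Z_i}J_{Z_k}$ using \eqref{eq:1=2} and \eqref{eq:1=3}, and this includes the null centre $Z_1+Z_4$.

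The genuine gap is the null central component, together with null $X$ in non-open orbits (for instance $X$ in an eigenspace of $J_Z$ when $\langle Z,Z\rangle=-1$). This is exactly the pseudo-Riemannian content of the theorem, and you only announce a case analysis over orbit types that you neither list nor solve.

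The mechanism you sketch for it is also wrong. If $(C+k\Id)Z=0$ with $k\neq 0$, then $D$ is not in $\mathfrak{stab}(Z)$; it lies in the stabilizer of the line $\mathbb RZ$ and scales $Z$ by $-k$. Conversely, $A\in\mathfrak{stab}(Z)$ forces $k=0$. So ``choose $k$ so that $J_ZX-kX$ lands in the $\mathfrak{stab}(Z)$-image'' is not the right condition. For the same reason, your description of the solution set as a coset of $\{D: CZ=0,\ AX=0\}$ fails when $Z$ and $X$ are both null. There the direction space is $\{(D,k): CZ=-kZ,\ AX=-kX\}$, and the paper records dimension $9$ for $(Z_1+Z_4)\oplus(v+J_{Z_4}v)$. (Also, $v+J_{Z_1}v$ is not null; its norm is $2$.)

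The missing step is in fact short.
\begin{itemize}
\item For null $Z$, take a null $Z'$ with $\langle Z,Z'\rangle=-\tfrac12$. Then $J_ZJ_{Z'}+J_{Z'}J_Z=\Id$ and $\mathfrak v=W\oplus W'$ with $W=\ker J_Z$ and $W'=\ker J_{Z'}$. Writing $X=w+w'$, you get $J_ZX=J_Zw'$.
\item For $Y\perp Z,Z'$, the operator $A=J_ZJ_Y$ lies in $\mathfrak h$ and satisfies $[A,J_Z]=0$, so $CZ=0$ and $k=0$. It kills $W$ and gives $AX=J_ZJ_Yw'$.
\item It therefore suffices to find $Y\in\{Z,Z'\}^\perp\cong\mathbb R^{2,3}$ with $J_Yw'=w'$.
\item Such $Y$ exists for every $w'\neq 0$. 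The group $\mathrm{Spin}(2,3)_0\cong Sp(4,\mathbb R)$ preserves $W'$, acts on it by its standard representation (hence transitively on $W'\setminus\{0\}$), and conjugates $J_Y$ to $J_{gY}$. So one pair suffices, and any $Y$ with $\langle Y,Y\rangle=-1$ has a nonzero $(+1)$-eigenspace in $W'$.
\end{itemize}
Without this argument, or the paper's explicit computations, the proposal does not establish the theorem.
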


It is known that the Clifford algebra $\operatorname{Cl}(\mathbb{R}^{3,4})$ has two non-equivalent minimal admissible $8$-dimensional modules, which in this case are irreducible Clifford modules, see~\cite{ABS64,LowMich89}.
Moreover, the corresponding $15$-dimensional pseudo $H$-type Lie algebras are isomorphic and isometric, see~\cite[Theorem 12]{FuMa17}.
Therefore, it is sufficient to check only one such pseudo $H$-type nilmanifold $N_{3,4}(\mathfrak v_{\min})$.
The pseudo-Riemannian $H$-type nilmanifold $N_{3,4}$ carries a left invariant metric generated by
the scalar product
$\langle \cdot\, , \cdot\rangle_{3,4}+\langle \cdot\, , \cdot\rangle_{4,4}$
on the Lie algebra $\mathfrak{n}_{3,4}=\mathbb R^{3,4}\oplus\mathbb R^{4,4}$.
We write $(z_0,x_0)\in \mathfrak z\oplus\mathfrak v_{\min}\cong\mathbb R^{3,4}\oplus \mathbb R^{4,4}$ for an initial vector for the geodesic. For different types of the initial vectors we will use distinguished bases for the horizontal space $\mathfrak v_{\min}$ and for the centre $\mathfrak z$. We start from a basis for the centre.

\subsubsection{Central component $z_0$ of initial vectors}\label{central z_0}

We are going to use the fact that there are only three classes in the centre of $\mathfrak{n}_{3,4}$, up to similarity and the action of the isotropy subgroup.
They are represented by $Z_1$ (positive vector), $Z_4$ (negative vector), and $Z_1+Z_4$ (null vector).
Indeed, the group $O(3,4)$ with the Lie algebra $\mathfrak{so}(3,4)$, that is a linear span of the operators $[J_{Z_i},J_{Z_j}]$, $1\leq i <j \leq 7$,
acts naturally by automorphisms on the centre $\mathfrak{z}$. It is well known that $O(3,4)$ acts transitively on every hyperquadric
$Q(\rho)=\{z \in \mathfrak{z} \,|\,\langle z, z\rangle_{3,4}=\rho\}$ with $\rho\neq 0$, see e.g.~\cite[page~239]{ONeill}.

Let us suppose that $z,\overline{z} \in \mathfrak{z}$ and $\langle z, z\rangle_{3,4}=\langle \overline{z}, \overline{z} \rangle_{3,4}=0$. If
$\langle z_{\mathbf p}, z_{\mathbf p}\rangle_{3,4}=\langle \overline{z}_{\mathbf p}, \overline{z}_{\mathbf p} \rangle_{3,4}=\rho\neq 0$ or, equivalently,
$\langle z_{\mathbf n}, z_{\mathbf n}\rangle_{3,4}=\langle \overline{z}_{\mathbf n}, \overline{z}_{\mathbf n} \rangle_{3,4}=-\rho\neq 0$, where the subscripts ${\mathbf p}$ and ${\mathbf n}$ mean the components
of vectors in $\spann\{Z_1,Z_2,Z_3\}$ and $\spann\{Z_4,Z_5,Z_6,Z_7\}$, respectively.
Then, it is easy to see, that there is $Q\in O(3)\times O(4) \subset O(3,4)$ such that $Q(z_{\mathbf p})=\overline{z}_{\mathbf p}$ and $Q(z_{\mathbf n})=\overline{z}_{\mathbf n}$.
Hence, up to a similarity, it suffices to consider only one point in every of the following hyperquadrics: $Q(1)$, $Q(-1)$, $Q(0)$ ( a nonzero point in the last case).

We can fix the part $z_0\in\mathfrak z$ of an initial vector $(z_0,x_0)$ of a geodesic in three different ways.

{\bf (C1)} If $\|z_0\|^2=\langle z_0, z_0\rangle_{\mathbb R^{3,4}}=1$, then we set
\[
\{Z_1=z_0,Z_2,\ldots, Z_7\},\quad
\]
as an orthonormal basis, satisfying
\begin{equation}\label{eq:b7}
\|Z_1\|^2=\|Z_2\|^2=\|Z_3\|^2=1,\quad \|Z_4\|^2=\|Z_5\|^2=\|Z_6\|^2=\|Z_7\|^2=-1.
\end{equation}
\smallskip

{\bf (C2)} If $\|z_0\|^2=\langle z_0, z_0\rangle_{\mathbb R^{3,4}}=-1$, then we set
\[
\{Z_1,Z_2,Z_3,Z_4=z_0,Z_5,Z_6,\ldots, Z_7\},\quad
\]
such that~\eqref{eq:b7} holds.
\smallskip

{\bf (C3)} If $\|z_0\|^2=\langle z_0, z_0\rangle_{\mathbb R^{3,4}}=0$, then first we fix an arbitrary orthonormal basis
 $\{z_1^{*},\ldots,z_7^{*}\}$, satisfying ~\eqref{eq:b7} and write
 \[
 z_0=\sum_{i=1}^3~\theta_i\,z_i^{*}+\sum_{i=4}^7\theta_i\,z_i^{*}:=z_+^{*} + z_-^{*}.
 \]
Then, without loss of generality, we can assume $\sum_{i=1}^3 {\theta_i}^2=\sum_{i=4}^{7}{\theta_i}^2=1$.
 Since $\langle z_+^{*},~z_-^{*}\rangle_{\mathbb R^{3,4}}=0$ we can
 select an orthonormal basis
\[
\{Z_1=z_+^{*},Z_2,Z_3,Z_4=z_-^{*},Z_5,Z_6, Z_7\},\quad
\]
still satisfying~\eqref{eq:b7}. With these basis vectors we can denote $z_0=Z_1+Z_4$.

In what follows we use the construction of basis on the centre as above.

\begin{remark}
In any of the above constructions, the operators $J_{Z_j}$, $j=1,\dots,7$, span a $7$-dimensional subspace in $J({\mathfrak z})\subset\mathfrak{so}(4,4)$, nevertheless the vector space $\mathbf{V}=J({\mathfrak z})$ is not a Lie subalgebra of $\mathfrak{so}(4,4)$. Hence, $N_{3,4}(\mathfrak v_{\min})$ is not a naturally reductive manifold, see Theorem~\ref{th:Ovanto}.
\end{remark}

\subsubsection{Horizontal component $x_0$ of initial vectors and reduction by conjugate Clifford action}

Fix $z_0\in\mathfrak z$ and choose a suitable basis for the centre as was described in Section~\ref{central z_0}. Now we have three cases.
\smallskip
$$
{\bf (H1)} \ \|x_0\|^2=\langle x_0, x_0\rangle_{\mathbb R^{4,4}}=1,\quad
{\bf (H2)} \ \|x_0\|^2=-1,\quad
{\bf (H3)} \ \|x_0\|^2=0.
$$

Before we consider these three options we let $v\in\mathfrak v_{\min}$ be such that $\|v\|^2=1$ and
\begin{equation}\label{eq:inv34}
\begin{array}{lllll}
&J_{Z_1}J_{Z_2}J_{Z_4}J_{Z_5}(v)=v,\quad &J_{Z_1}J_{Z_2}J_{Z_6}J_{Z_7}(v)=v,\quad
\\
&J_{Z_1}J_{Z_3}J_{Z_5}J_{Z_7}(v)=v,\quad
&J_{Z_1}J_{Z_2}J_{Z_3}(v)=v.
\end{array}
\end{equation}
The condition $J_{Z_1}J_{Z_2}J_{Z_3}(v)=v$ is equivalent to $J_{Z_1}J_{Z_2}J_{Z_3}J_{Z_4}J_{Z_5}J_{Z_6}J_{Z_7}(v)=-v$. This choice fixes the module $\mathfrak v_{\min}=\mathfrak v^{-}_{\min}$. Moreover, the choice of $v$ as in~\eqref{eq:inv34} allows us to use the following orthonormal basis for the module $\mathfrak v_{\min}$:
\begin{equation}\label{eq:basis34-7}
\begin{array}{lllll}
&X_1=v,\quad &X_2=J_{Z_1}(v),\quad &X_3=J_{Z_2}(v),\quad &X_4=J_{Z_3}(v),
\\
&X_5=J_{Z_4}(v),\quad &X_6=J_{Z_5}(v),\quad &X_7=J_{Z_6}(v),\quad &X_8=J_{Z_7}(v).
\end{array}
\end{equation}
\smallskip

Let $x_0\in\mathfrak v_{\min}$ be as in $\bf (H1)$ above. Let $T\in SO(4,4)$ be such that $x_0=T(v)$ and
consider the conjugate Clifford action
\begin{equation}\label{eq:tildeT}
\tilde{J}_w:=T^{-1}J_wT=T^{\tau}J_{w}T,
\end{equation}
where $ T^{\tau}$ is the transpose to $T$ with respect to the scalar product $\langle \cdot\,,\cdot \rangle_{4,4}$.
Then, a geodesic determined by the initial vector $(z_0,x_0)$
is homogeneous if there is a pair $D=(C,A)$ such that
\begin{align}\label{in nor + +}
&\Big(A+k\Id\Big)(x_0)=J_{z_0}(x_0),\quad \Big(C+k\Id\Big)(z_0)=0,\quad [A,J_w]=J_{C(w)},\quad\text{for all}\ w \in\mathbb{R}^{3,4}.
\end{align}
\begin{lemma}\label{lem:1}
Let $x_0=T(v)$ for $T\in SO(4,4)$. Then condition~\eqref{in nor + +} is equivalent to
\begin{equation}\label{in* nor* + +}
\Big(\tilde{A}+k\Id\Big)(v)=\tilde{J}_{z_0}(v),\quad  \Big(C+k\Id\Big)(z_0)=0, \quad [\tilde{A}, \tilde{J}_w]=\tilde{J}_{C(w)},\quad w\in\mathfrak z\cong\mathbb{R}^{3,4}
\end{equation}
where $\tilde{A}(v)=T^{-1}AT(v)$ and $\tilde{J}_w$ as in~\eqref{eq:tildeT}.
\end{lemma}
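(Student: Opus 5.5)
The plan is to conjugate each of the three conditions in~\eqref{in nor + +} by $T$ and use that $T$ is invertible, so every step can be reversed. Put $\tilde A=T^{-1}AT$. Since $T\in SO(4,4)$, we have $T^{-1}=T^{\tau}$ with respect to $\langle\cdot\,,\cdot\rangle_{4,4}$, so conjugation by $T$ preserves $\mathfrak{so}(4,4)$. Hence $A$ is skew-symmetric if and only if $\tilde A$ is, and likewise $\tilde J_w=T^{\tau}J_wT$ is skew-symmetric. This also justifies the second expression for $\tilde J_w$ in~\eqref{eq:tildeT}.

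The first condition comes from substituting $x_0=T(v)$ into $(A+k\Id)(x_0)=J_{z_0}(x_0)$. This gives $(A+k\Id)Tv=J_{z_0}Tv$. Applying $T^{-1}$ yields $(T^{-1}AT+k\Id)v=T^{-1}J_{z_0}Tv$, that is, $(\tilde A+k\Id)(v)=\tilde J_{z_0}(v)$. Applying $T$ reverses the step.

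The second condition, $(C+k\Id)(z_0)=0$, involves only $C$, $k$ and $z_0$. None of these is altered by the conjugation, so it appears unchanged in~\eqref{in* nor* + +}.

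For the derivation condition, conjugation by $T^{-1}$ is an associative algebra automorphism of $\operatorname{End}(\mathfrak v_{\min})$. Therefore $T^{-1}[A,J_w]T=[T^{-1}AT,T^{-1}J_wT]=[\tilde A,\tilde J_w]$, and $T^{-1}J_{C(w)}T=\tilde J_{C(w)}$. So $[A,J_w]=J_{C(w)}$ for all $w$ holds if and only if $[\tilde A,\tilde J_w]=\tilde J_{C(w)}$ for all $w$.

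Finally, I would observe that $\tilde J$ is again an admissible Clifford action of $\mathbb R^{3,4}$ on $(\mathfrak v_{\min},\langle\cdot\,,\cdot\rangle_{4,4})$, since $\tilde J_w^2=T^{-1}J_w^2T=-\langle w,w\rangle_{3,4}\Id$. Consequently the correspondence $(C,A)\leftrightarrow(C,\tilde A)$ is a bijection between the skew-symmetric derivations for $J$ and those for $\tilde J$. No step here is a real obstacle. The only point needing care is the convention: $\tilde A$ must be read as the operator $T^{-1}AT$, not merely its value at $v$, since the third condition uses all of $\tilde A$.
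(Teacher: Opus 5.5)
Your proposal is correct and takes the same route as the paper: substitute $x_0=T(v)$ and conjugate by $T$, both in the eigen-equation and in the derivation relation $[A,J_w]=J_{C(w)}$, leaving $(C+k\Id)(z_0)=0$ unchanged. You also make explicit three points the paper leaves implicit, namely the reverse direction, the preservation of skew-symmetry under conjugation by $T\in SO(4,4)$, and that $\tilde A$ must be read as the full operator $T^{-1}AT$.
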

\begin{proof}
In fact, from~\eqref{eq:tildeT} and~\eqref{in nor + +}
\begin{eqnarray*}
\Big(\tilde{A}+k\Id\Big)(v)&=&T^{-1}\Big(A+k\Id\Big)T(v)=T^{-1}\Big(A+k\Id\Big)(x_0)=T^{-1}J_{z_0}(x_0)
\\
&=&T^{-1}T\tilde J_{z_0}T^{-1}(x_0)
= \tilde{J}_{z_0}(v),
\end{eqnarray*}
$$
[\tilde{A}, \tilde{J}_w]=[T^{-1}AT, T^{-1}J_w T]=T^{-1}[A,J_w]T=T^{-1}J_{C(w)}T=\tilde{J}_{C(w)}.
$$
\end{proof}

Let $x_0\in\mathfrak v_{\min}$ be as in $\bf (H2)$ and $v\in \mathfrak v_{\min}$ as in~\eqref{eq:inv34}.
We find $T\in SO(4,4)$, $T\colon \mathfrak v_{\min}\to \mathfrak v_{\min}$,  such that
$x_0=J_{Z_4}T(v)$. Then condition~\eqref{in nor + +} is equivalent to
\[
\Big(\tilde A+k\Id\Big)\big(\tilde J_{Z_4}(v)\big)=\tilde J_{z_0}\big(\tilde J_{Z_4}(v)\big),\quad \Big(C+k\Id\Big)(z_0)=0,\quad  [\tilde A,\tilde J_w]=\tilde J_{C(w)}
\]
for all $w\in\mathbb{R}^{3,4}$ with $\tilde{A}(v)=T^{-1}AT(v)$ and $\tilde{J}_w$ as in~\eqref{eq:tildeT}. We only show the first equality as follows
\begin{eqnarray*}
\Big(\tilde{A}+k\Id\Big)\big(\tilde J_{Z_4}(v)\big)&=&T^{-1}\Big(A+k\Id\Big)T\big(T^{-1} J_{Z_4}T(v)\big)=T^{-1}\Big(A+k\Id\Big)(x_0)
\\
&=&T^{-1}J_{z_0}(x_0)
=T^{-1}T\tilde J_{z_0}T^{-1}\big(J_{Z_4}T(v)\big)
\\
&=&
 \tilde{J}_{z_0}T^{-1}\big(T\tilde J_{Z_4}T^{-1}T(v)\big)=\tilde J_{z_0}\tilde J_{Z_4}(v).
\end{eqnarray*}
\smallskip

Let $x_0\in\mathfrak v_{\min}$ be as in $\bf (H3)$ and $v\in \mathfrak v_{\min}$ as in~\eqref{eq:inv34}.
We fix an orthonormal basis $\{Z_{i}\}_{i=1}^7$ as in~\eqref{eq:b7}, and express $x_0$ in the basis~\eqref{eq:basis34-7} as follows
$$
x_0=\theta_0v+\sum_{i=1}^{3}\theta_{i}J_{Z_{i}}(v)+\sum_{i=4}^{7}\theta_{i}J_{Z_{i}}(v)
=x_0^{+}+ x_0^{-},\qquad
\theta_0v+\sum_{i=1}^{3}{\theta_{i}}^{2}= \sum_{i=4}^{7}{\theta_{i}}^{2}=1.
$$
Since $\langle x_0^{+}, x_0^-\rangle_{ \mathfrak v_{\min}}=0$, we can find
an orthogonal transformation $T\in SO(4,0)\times SO(0,4)\subset SO(4,4)$
such that $x_0^{+}=T(v)$ and $x_0^{-}=J_{Z_{4}}T(v)$.
Then for $\tilde{J}_{z}=T^{-1}J_{z}T$
we obtain that
\[
\Big(\tilde A+k\Id\Big)\big(v+\tilde J_{Z_4}(v)\big)=\tilde J_{z_0}\big(v+\tilde J_{Z_4}(v)\big),\quad \Big(C+k\Id\Big)(z_0)=0,\quad  [\tilde A,\tilde J_w]=\tilde J_{C(w)}
\]
are equivalent to~\eqref{in nor + +}. We calculate
\begin{eqnarray*}\label{simplified con 1}
\Big(\tilde{A}+k\Id\Big)\big(v+\tilde J_{Z_4}(v)\big)&=&
T^{-1}\Big(A+k\Id \Big)\big(T(v)+TT^{-1}J_{Z_{4}}T(v)\big)
\\
&=&T^{-1}\Big(A+k\Id \Big)\big(x_0^++x_0^-\big)
=T^{-1}J_{z_0}(x_0)
\\
&=&T^{-1}T\tilde J_{z_0}T^{-1}\big(T(v)+J_{Z_4}T(v)\big)
=\tilde J_{z_0}\big(v+\tilde J_{Z_4}(v)\big).
\end{eqnarray*}
The rest of equalities can be shown as was done previously.

We denote the $H$-type group $N_{3,4}(\mathfrak v_{\min})$ related to the Clifford action $J_{z}\colon \mathfrak v_{\min}\to \mathfrak v_{\min}$ and the
$H$-type group $N_{3,4}^T(\mathfrak v_{\min})$ related to the Clifford action $\tilde J_{z}\colon \mathfrak v_{\min}\to \mathfrak v_{\min}$, $\tilde J_{z}=T^{-1}J_zT$. Then the Lie bracket $[x,y]^{T}$ defined by $\tilde J_z$ is given by
\[
\langle z,[x,y]^{T}\rangle_{\mathbb R^{3,4}}=\langle \tilde{J}_{z}(x),y\rangle_{ \mathfrak v_{\min}}.
\]

\begin{lemma} \label{conjugate groups}
The transformation $\Id\times T\colon N_{3,4}(\mathfrak v_{\min})\to N_{3,4}^T(\mathfrak v_{\min})$
is an isometric isomorphism.
\end{lemma}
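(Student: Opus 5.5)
The plan is to check directly that the linear map $\Phi=\Id\times T\colon \mathfrak z\oplus\mathfrak v_{\min}\to\mathfrak z\oplus\mathfrak v_{\min}$ is a Lie algebra isomorphism from $\mathfrak n^T_{3,4}$ (bracket $[\cdot,\cdot]^T$) onto $\mathfrak n_{3,4}$ (bracket $[\cdot,\cdot]$), or its inverse in the other direction, and that it preserves the scalar product $\langle\cdot,\cdot\rangle_{3,4}+\langle\cdot,\cdot\rangle_{4,4}$. Since both groups are connected and simply connected, a Lie algebra isomorphism integrates uniquely to a group isomorphism. A left-invariant metric is determined by its value at the identity, and the differential of a group isomorphism intertwines left translations. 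Hence a Lie algebra isomorphism that is an isometry of the scalar products integrates to an isometric isomorphism of the nilmanifolds, as in the identification of automorphisms of $(N,g)$ and $(\mathfrak n,\langle\cdot,\cdot\rangle)$ used in Section~\ref{se.auiliary}.

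\textbf{Isometry.} $\Phi$ is the identity on $\mathfrak z$ and $T\in SO(4,4)$ on $\mathfrak v_{\min}$. The summands $\mathfrak z$ and $\mathfrak v_{\min}$ are orthogonal and preserved by $\Phi$, so $\Phi$ preserves the full scalar product. The centre is preserved because both brackets take values in $\mathfrak z$ and vanish on $\mathfrak z$.

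\textbf{Homomorphism.} It suffices to check that $T[x,y]^T=[Tx,Ty]$ in the $\mathfrak z$-component, i.e. $[x,y]^T=[Tx,Ty]$ for $x,y\in\mathfrak v_{\min}$; brackets involving $\mathfrak z$ vanish on both sides. Pair both sides with an arbitrary $z\in\mathfrak z$ and use nondegeneracy of $\langle\cdot,\cdot\rangle_{3,4}$:
\[
\langle z,[x,y]^T\rangle_{3,4}=\langle \tilde J_z x,y\rangle_{4,4}=\langle T^{-1}J_zTx,y\rangle_{4,4}=\langle J_zTx,Ty\rangle_{4,4}=\langle z,[Tx,Ty]\rangle_{3,4},
\]
where the third equality uses $T^{-1}=T^{\tau}$, the adjoint with respect to $\langle\cdot,\cdot\rangle_{4,4}$. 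So the map $x\mapsto Tx$ intertwines the brackets, and $\Phi$ is an isometric Lie algebra isomorphism $\mathfrak n^T_{3,4}\to\mathfrak n_{3,4}$. Its inverse $\Id\times T^{-1}$ goes the other way, and either direction gives the stated lemma up to naming.

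As a sanity check, $\tilde J_z$ is skew with respect to $\langle\cdot,\cdot\rangle_{4,4}$ and $\tilde J_z^2=T^{-1}J_z^2T=-\langle z,z\rangle\Id$, so $N^T_{3,4}$ is indeed a pseudo $H$-type group. There is no real obstacle here. The only point needing care is the direction of the map: the natural isomorphism is $x\mapsto Tx$ from the $T$-twisted algebra to the original one. I would state it in that direction, or equivalently use $T^{-1}$ in the direction written in the lemma.
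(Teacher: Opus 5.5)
Your proposal is correct and is essentially the paper's proof. The paper uses the same chain $\langle z,[Tx,Ty]\rangle=\langle J_zTx,Ty\rangle=\langle T^{\tau}J_zTx,y\rangle=\langle \tilde J_zx,y\rangle=\langle z,[x,y]^T\rangle$, and leaves the isometry and integration steps implicit.

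Your remark about direction is also accurate. That computation shows $\Id\times T$ carries $[\cdot,\cdot]^T$ to $[\cdot,\cdot]$. So it is an isometric isomorphism $N^T_{3,4}(\mathfrak v_{\min})\to N_{3,4}(\mathfrak v_{\min})$, and $\Id\times T^{-1}$ gives the map in the direction written in the lemma.
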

\begin{proof}
The result follows from the chain of equalities:
\begin{eqnarray*}
\langle z,[T(x),T(y)]\rangle_{\mathbb R^{3,4}}&=&
\langle J_{z}(T(x)),T(y)\rangle_{ \mathfrak v_{\min}}
\\
&=&
\langle T^{\tau}J_{z}T(x),y\rangle_{ \mathfrak v_{\min}}
=\langle \tilde{J}_{z}(x),y\rangle_{ \mathfrak v_{\min}}
=\langle z,[x,y]^{T}\rangle_{ \mathbb R^{3,4}}.
\end{eqnarray*}
\end{proof}
\smallskip

The discussions above can be summarized in the statement that an initial vector $(z_0,x_0)\in\mathfrak z\oplus\mathfrak v_{\min}$ for a geodesic can be restricted to the following nine cases.

\begin{equation}\left.
\begin{array}{llll}
Z_{1}\oplus v,
\\
Z_{4}\oplus v,
\\
(Z_{1}+Z_{4})\oplus v
\\
\\
Z_{1} \oplus J_{Z_{4}}(v),
\\
Z_{4} \oplus J_{Z_{4}}(v),
\\
(Z_{1}+Z_{4})\oplus J_{Z_{4}}(v)
\\
\\
Z_{1}\oplus (v+J_{Z_{4}}(v)),
\\
Z_{4}\oplus (v+J_{Z_{4}}(v)),
\\
(Z_{1}+Z_{4})\oplus (v+J_{Z_{4}}(v))
\end{array}\right\}\in  \mathfrak z\oplus\mathfrak v\cong\mathbb{R}^{3,4}\oplus \mathfrak v_{\min}.
\label{all n}
\end{equation}

We show the  existence of a pair $(C,A)$ for all these $9$ cases~\eqref{all n}, as well as the fact
that they form an affine space of a suitable dimension according to the
initial conditions.

\subsubsection{Relation between $A$ and $C$ in $[A,J_{w}]=J_{C(w)}$}
We express $A$ in the form
\begin{equation}\label{expression of A}
A=\sum_{i<k}\,a_{i,k}J_{Z_{i}}J_{Z_{k}}.
\end{equation}
Indeed, we know that $A$ belongs to linear combination of $\{[J_{Z_{i}},J_{Z_{k}}]\,|\, 1\leq i< k \leq 7\}$, see details in~\cite[Proposition 4]{FuMaNik25}.
On the other hand, $[J_{Z_{i}},J_{Z_{k}}]=J_{Z_{i}}J_{Z_{k}}-J_{Z_{k}}J_{Z_{i}}=2J_{Z_{i}}J_{Z_{k}}$.
Then $[A,J_{Z_1}]=J_{C(Z_1)}$ implies
\begin{equation*}
[A,J_{Z_1}]=A{J_{Z_1}}-J_{Z_1}A=\sum_{i<k} a_{i,k}J_{Z_i}J_{Z_k}J_{Z_1}
-J_{Z_1}\sum_{i<k} a_{i,k}J_{Z_i}J_{Z_k}
=2\sum_{k>1} a_{1,k}J_{Z_k}=J_{2\sum_{k=2}^{7} a_{1,k}Z_{k}}.
\end{equation*}
Thus $C(Z_1)=2\sum_{k=2}^{7} a_{1,k}Z_{k}$.
By similar calculations we obtain
\begin{equation*}
\begin{array}{lllll}
%&C(Z_{1})=2\sum_{k=2}^{7} a_{1,k} Z_{k},\quad
&C(Z_{2})=-2a_{1,2} Z_{1}+ 2\sum_{k=3}^7 a_{2,k}Z_{k},\quad
&C(Z_{3})=-2\sum_{k=1}^{2} a_{k,3} Z_{k}+ 2\sum_{k=4}^{7} a_{3,k}Z_{k},\\
&C(Z_{4})=2\sum_{k=1}^{3} a_{k,4} Z_{k}- 2\sum_{k=5}^{7} a_{4,k}Z_{k},\quad
&C(Z_{5})=2\sum_{k=1}^{4} a_{k,5} Z_{k}- 2\sum_{k=6}^{7} a_{5,k}Z_{k},\\
&C(Z_{6})=2\sum_{k=1}^{5} a_{k,6} Z_{k}- 2a_{6,7}Z_{7},\quad
&C(Z_{7})=2\sum_{k=1}^{6} a_{k,7} Z_{k}.
\end{array}
\end{equation*}
In the matrix form $C$ is given by
\begin{equation}C=2\times
\begin{pmatrix}
0&-a_{1,2}&-a_{1,3}&a_{1,4}&a_{1,5}&a_{1,6}&a_{1,7}\\
a_{1,2}&0&-a_{2,3}&a_{2,4}&a_{2,5}&a_{2,6}&a_{2,7}\\
a_{1,3}&a_{2,3}&0&a_{3,4}&a_{3,5}&a_{3,6}&a_{3,7}\\
a_{1,4}&a_{2,4}&a_{3,4}&0&a_{4,5}&a_{4,6}&a_{4,7}\\
a_{1,5}&a_{2,5}&a_{3,5}&-a_{4,5}&0&a_{5,6}&a_{5,7}\\
a_{1,6}&a_{2,6}&a_{3,6}&-a_{4,6}&-a_{5,6,}&0&a_{6,7}\\
a_{1,7}&a_{2,7}&a_{3,7}&-a_{4,7}&-a_{5,7}&-a_{6,7}&0
\end{pmatrix}.\label{M expression of C}
\end{equation}

\subsubsection{Writing $A(v)$ and $A(J_{Z_{4}}(v))$ in basis~\eqref{eq:basis34-7} }

Note that condition~\eqref{eq:inv34} implies
\begin{equation}\label{eq:1=2}
    \begin{array}{llllllllll}
J_{Z_1}(v)&=&-J_{Z_2}J_{Z_3}(v)&=&J_{Z_4}J_{Z_7}(v)&=&J_{Z_5}J_{Z_6}(v),\quad
\\
J_{Z_2}(v)&=&J_{Z_1}J_{Z_3}(v)&=&J_{Z_4}J_{Z_6}(v)&=&-J_{Z_5}J_{Z_7}(v),\quad
\\
J_{Z_3}(v)&=&-J_{Z_1}J_{Z_2}(v)&=&J_{Z_4}J_{Z_5}(v)&=&J_{Z_6}J_{Z_7}(v),
\\
J_{Z_4}(v)&=&J_{Z_1}J_{Z_7}(v)&=&J_{Z_2}J_{Z_6}(v)&=&J_{Z_3}J_{Z_5}(v),
\\
J_{Z_5}(v)&=&-J_{Z_2}J_{Z_7}(v)&=&-J_{Z_3}J_{Z_4}(v)&=&J_{Z_1}J_{Z_6}(v),\quad
\\
J_{Z_6}(v)&=&-J_{Z_1}J_{Z_5}(v)&=&-J_{Z_2}J_{Z_4}(v)&=&J_{Z_3}J_{Z_7}(v),
\\
J_{Z_7}(v)&=&-J_{Z_1}J_{Z_4}(v)&=&J_{Z_2}J_{Z_5}(v)&=&-J_{Z_3}J_{Z_6}(v).
\end{array}
\end{equation}

Put
$$A(v)=\sum_{i<k}\,a_{i,k}J_{Z_{i}}J_{Z_{k}}(v)=\alpha_0 v+\sum_{j=1}^{7}\,\alpha_{j}J_{Z_{j}}(v)=\sum_{j=0}^7 \alpha_jX_{j+1}.
$$
Then using~\eqref{eq:1=2} we find the relations between coefficients $a_{i,k}$ and $\alpha_j$ as follows

\begin{equation}
\left\{
\begin{array}{l}
\alpha_{0}=0,\\
\alpha_{1}=-a_{2,3}+a_{4,7}+a_{5,6},\\%=1,\\
\alpha_{2}=  a_{1,3}+a_{4,6}-a_{5,7},\\%=0,\\
\alpha_{3}= -a_{1,2}+a_{4,5}+a_{6,7},\\%=0,\\
\alpha_{4}= a_{1,7}+a_{2,6}+a_{3,5},\\%=0,\\
\alpha_{5}= a_{1,6}-a_{2,7}-a_{3,4},\\%=0,\\
\alpha_{6}= -a_{1,5}-a_{2,4} + a_{3,7},\\%=0,\\
\alpha_{7}= -a_{1,4}+ a_{2,5}-a_{3,6}.\\%=0.
\end{array}\right.
\label{Av*}
\end{equation}
Write
$$
A(J_{Z_{4}}(v))=\sum_{i<k}\,a_{i,k}J_{Z_{i}}J_{Z_{k}}(v)=
\beta_0 v+\sum_{m=1}^{7}\beta_{m}J_{Z_{m}}(v)=\sum_{m=0}^7 \beta_mX_{m+1},
$$
and note that
\begin{equation}\label{eq:1=3}
    \begin{array}{llllllllll}
J_{Z_1}(v)&=&J_{Z_1}J_{Z_4}J_{Z_4}(v)&=&J_{Z_2}J_{Z_5}J_{Z_4}(v)&=&-J_{Z_3}J_{Z_6}J_{Z_4}(v)
\\
J_{Z_2}(v)&=&-J_{Z_1}J_{Z_5}J_{Z_4}(v)&=&J_{Z_2}J_{Z_4}J_{Z_4}(v)&=&J_{Z_3}J_{Z_7}J_{Z_4}(v)
\\
J_{Z_3}(v)&=&J_{Z_1}J_{Z_6}J_{Z_4}(v)&=&-J_{Z_2}J_{Z_7}J_{Z_4}(v)&=&J_{Z_3}J_{Z_4}J_{Z_4}(v)
\\
v&=&J_{Z_1}J_{Z_7}J_{Z_4}(v)&=&J_{Z_2}J_{Z_6}J_{Z_4}(v)&=&J_{Z_3}J_{Z_5}J_{Z_4}(v)
\\
J_{Z_5}(v)&=&-J_{Z_1}J_{Z_2}J_{Z_4}(v)&=&-J_{Z_4}J_{Z_5}J_{Z_4}(v)&=&J_{Z_6}J_{Z_7}J_{Z_4}(v)
\\
J_{Z_6}(v)&=&J_{Z_1}J_{Z_3}J_{Z_4}(v)&=&-J_{Z_4}J_{Z_6}J_{Z_4}(v)&=&-J_{Z_5}J_{Z_7}J_{Z_4}(v)
\\
J_{Z_7}(v)&=&-J_{Z_2}J_{Z_3}J_{Z_4}(v)&=&-J_{Z_4}J_{Z_7}J_{Z_4}(v)&=&J_{Z_5}J_{Z_6}J_{Z_4}(v).
\end{array}
\end{equation}

By using~\eqref{eq:1=3} we find relations between $a_{i,k}$ and $\beta_j$:
\begin{equation}
\left\{
\begin{array}{ll}
\beta_{0}=a_{1,7}+a_{2,6}+a_{3,5},\\%=0,\\
\beta_{1}=a_{1,4}+a_{2,5}-a_{3,6},\\%=0,\\
\beta_{2}=-a_{1,5}+a_{2,4}+a_{3,7},\\%=0,\\
\beta_{3}=a_{1,6}-a_{2,7}+a_{3,4},\\%=0,\\
\beta_{4}=0,\\%=0,\\
\beta_{5}=-a_{1,2} -a_{4,5}+a_{6,7},\\%=0,\\
\beta_{6}=a_{1,3}-a_{4,6}-a_{5,7},\\%=0,\\
\beta_{7}=-a_{2,3}-a_{4,7}+a_{5,6}.%=-1,
\end{array}\right.\label{AJ4v*}
\end{equation}

\subsubsection{Existence of $D=(C,A)$}\label{sec:final}

Based on data in the previous section, we discuss the existence
of a pair $(C,A)$ of condition~\eqref{in nor + +}
for each cases of initial conditions~\eqref{all n}.

{\sc Case $\|x_0\|^2=\|z_0\|^2=1$}. Since the initial vector is not null, it is enough to check the conditions $C(Z_{1})=0$, $A(v)=J_{Z_{1}}(v)$ by Lemma~\ref{lem:1}. We use~\eqref{M expression of C},~\eqref{Av*} and obtain
\begin{align}
&a_{1,2}=a_{1,3}=a_{1,4}=a_{1,5}=a_{1,6}=a_{1,7}=0,~\text{and}\nonumber
\\
&\left\{
\begin{array}{l}
\alpha_{0}=0\\
\alpha_{1}=-a_{2,3}+a_{4,7}+a_{5,6}=1\\
\alpha_{2}=  a_{1,3}+a_{4,6}-a_{5,7}=0\\
\alpha_{3}= -a_{1,2}+a_{4,5}+a_{6,7}=0\\
\alpha_{4}= a_{1,7}+a_{2,6}+a_{3,5}=0\\
\alpha_{5}= a_{1,6}-a_{2,7}-a_{3,4}=0\\
\alpha_{6}= -a_{1,5}-a_{2,4} + a_{3,7}=0\\
\alpha_{7}= -a_{1,4}+ a_{2,5}-a_{3,6}=0,
\end{array}\right.
\implies
\left\{
\begin{array}{l}
a_{5,6}=1+a_{2,3}-a_{4,7}\\
a_{5,7}=a_{4,6}\\
a_{6,7}=-a_{4,5}\\
a_{3,5}=-a_{2,6}\\
a_{3,4}=-a_{2,7}\\
a_{3,7}=a_{2,4} \\
a_{3,6}=a_{2,5},
\end{array}\right.
\\
&a_{2,3},a_{2,4},a_{2,5},a_{2,6},a_{2,7},\ a_{4,5},a_{4,6},a_{4,7}\quad\text{are free parameters}.\nonumber
\label{Av}
\end{align}

{\sc Case $\|x_0\|^2=-\|z_0\|^2=1$}. In this case the initial vector is null and we need to check the conditions $\Big(C+k\Id\Big)(Z_{4})=0$, $\Big(A+k\Id\Big)(v)=J_{Z_{4}}(v)$ according to Lemma~\ref{lem:1}. The condition $\Big(C+k\Id\Big)(Z_{4})=0$ gives
\begin{equation}\label{eq:Z4}
a_{1,4}=a_{2,4}=a_{3,4}=a_{4,5}=a_{4,6}=a_{4,7}=k=0.
\end{equation}
Thus, we only need to check $A(v)=J_{Z_{4}}(v)$, which leads to
\begin{align}
&\begin{cases}
\alpha_{0}=0,\\
\alpha_{1}=-a_{2,3}+a_{4,7}+a_{5,6}=0,\\
\alpha_{2}=  a_{1,3}+a_{4,6}-a_{5,7}=0,\\%=0,\\
\alpha_{3}= -a_{1,2}+a_{4,5}+a_{6,7}=0,\\%=0,\\
\alpha_{4}= a_{1,7}+a_{2,6}+a_{3,5}=1,\\%=0,\\
\alpha_{5}= a_{1,6}-a_{2,7}-a_{3,4}=0,\\%=0,\\
\alpha_{6}= -a_{1,5}-a_{2,4} + a_{3,7}=0,\\%=0,\\
\alpha_{7}= -a_{1,4}+ a_{2,5}-a_{3,6}=0,\\%=0.
\end{cases}
\quad\implies\quad
\begin{cases}
a_{5,6}=a_{2,3},\\%=1,\\
a_{5,7}=a_{1,3},\\%=0,\\
a_{6,7}=a_{1,2},\\%=0,\\
a_{3,5}=1-a_{1,7}-a_{2,6},\\%=0,\\
a_{2,7}=a_{1,6},\\%=0,\\
a_{3,7}=a_{1,5},\\%=0,\\
a_{3,6}=a_{2,5},\\%=0.
\end{cases}
\\
&\quad a_{1,2}, a_{1,3},a_{1,5},a_{1,6},a_{1,7},a_{2,3},a_{2,5},a_{2,6}\quad\text{are free parameters}.\nonumber
\label{Av*1}
\end{align}
by~\eqref{Av*}.
\smallskip
Analogously to the above case, one can obtain solutions for any of the following initial conditions:
\begin{equation}\label{eq: 111}
\begin{array}{cccc}
&Z_{1}\oplus v, \quad Z_{4}\oplus v, \quad
Z_{1}\oplus J_{Z_{4}}(v), \quad Z_{4}\oplus J_{Z_{4}}(v),
\\
&(Z_{1}+Z_{4})\oplus v, \qquad (Z_{1}+Z_{4})\oplus J_{Z_{4}}(v).
\end{array}
\end{equation}

{\sc Case $(z_0,x_0)$ with $\|x_0\|=0$ and/or $\|z_0\|=0$.}
\quad
The cases $Z_{1}\oplus (v+J_{Z_{4}}(v))$, $Z_{4}\oplus (v+J_{Z_{4}}(v))$, and
$(Z_{1}+Z_{4})\oplus (v+J_{Z_{4}}(v))$
are treated by a similar way. We consider only the
$(Z_{1}+Z_{4})\oplus (v+J_{Z_{4}}(v))$ in some details.

Conditions~\eqref{in nor + +} are equivalent to
\begin{equation*}
\begin{array}{lllll}
\Big(A+k\Id\Big)(v+J_{Z_{4}}(v))&=&A(v+J_{Z_{4}}(v))+kv+kJ_{Z_4}v
\\
&=&J_{Z_{1}+Z_{4}}(v+J_{Z_{4}}(v))
\\
&=&J_{Z_{1}}(v)+J_{Z_1Z_{4}}(v)+J_{Z_{4}}(v) +v
\\
&=&X_1+X_2+X_5-X_8,
\end{array}
\end{equation*}
by~\eqref{eq:basis34-7}.
We obtain the system
\begin{equation}\label{eq:gamma}
\left\{
\begin{array}{llll}
&\alpha_{0}+\beta_{0}= a_{1,7}+a_{2,6}+a_{3,5}=1-k\\
&\alpha_{1}+\beta_{1}=(-a_{2,3}+a_{5,6}-a_{3,6}+a_{2,5}) +(a_{1,4}+a_{4,7})=1\\
&\alpha_{2}+\beta_{2} =(a_{1,3}-a_{1,5}+a_{3,7}-a_{5,7})+(a_{4,6}+a_{2,4})=0\\
&\alpha_{3}+\beta_{3}=(-a_{1,2}+a_{6,7}+a_{1,6}-a_{2,7})+(a_{4,5}+a_{3,4})=0\\
&\alpha_{4}+\beta_{4}=a_{1,7}+a_{2,6}+a_{3,5}=1-k\\
&\alpha_{5}+\beta_{5}=(-a_{1,2}+a_{6,7}+a_{1,6}-a_{2,7})-(a_{4,5}+a_{3,4})=0\\
&\alpha_{6}+\beta_{6}= (a_{1,3}-a_{1,5}+a_{3,7}-a_{5,7})-(a_{4,6}+a_{2,4}) =0\\
&\alpha_{7}+\beta_{7}= (-a_{2,3}+a_{5,6} -a_{3,6}+a_{2,5}) -(a_{1,4}+a_{4,7})=-1.
\end{array}
\right.
\end{equation}
The condition $C(Z_{1}+Z_{4})=0$ leads to
$$
a_{1,4}=-k, ~a_{1,2}+a_{2,4}=0,~a_{1,3}+a_{3,4}=0,
~a_{1,5}-a_{4,5}=0,~a_{1,6}-a_{4,6}=0,~a_{1,7}-a_{4,7}=0
$$
From these expressions we obtain the solution to~\eqref{eq:gamma}:
\begin{equation*}
\begin{cases}
a_{1,4}=-k/2
\\
a_{4,6}=-a_{2,4}=a_{1,6}=a_{1,2}
\\
a_{4,5}=-a_{3,4}=a_{1,5}=a_{1,3}
\\
a_{4,7}=a_{1,7}=1+k/2
\\
a_{5,6}=a_{2,3}-a_{2,5}+a_{3,6}
\\
a_{3,5}=-a_{2,6}-3k/2
\\
a_{6,7}=a_{2,7}
\\
a_{5,7}=a_{3,7}
\end{cases}
\end{equation*}
with eight free parameters $a_{1,2},a_{1,3},a_{2,3},a_{2,5},a_{2,6},a_{2,7},a_{3,6},a_{3,7}$ and one additional $k$.

This finishes the proof of Theorem~\ref{th:34min}.
We emphasize that this proof is constructive and it is easy to find a 1-parameter group
(and even a multi-parameter family of such groups) whose orbit is an arbitrarily chosen geodesic.

\begin{remark}
For each initial vector, the set of pairs $(D,k)$ solving~\eqref{in nor + +} is a non-empty affine
subspace. Its dimension is not always $8$. It equals $8$ for the cases $Z_1\oplus v$ and
$Z_4\oplus v$ and for generic initial vectors. It equals $9$ for
$(Z_1+Z_4)\oplus(v+J_{Z_4}v)$, where $k$ is free, and $11$ for $Z_4\oplus(v+J_{Z_4}v)$. The latter one easily follows from~\eqref{eq:Z4} for $k=0$ and~\eqref{eq:gamma}.
\end{remark}

\subsection{Pseudo-Riemannian  $H$-type nilmanifold ${N}_{3,4}(\mathfrak v_{\min}\oplus \mathfrak v_{\min})$}\label{sec:34--}

Now we  consider a Clifford module $\mathfrak v^{3,4}$ of $\Cl(\mathbb R^{3,4})$ which is not minimal admissible but rather the orthogonal sum $\mathfrak v^{3,4}=\mathfrak v_{\min}\oplus \mathfrak v_{\min}$ of two equivalent minimal admissible modules. The module $\mathfrak v^{3,4}$ is isometric to $\mathbb R^{8,8}$.

\begin{theorem}\label{th:34++}
The pseudo-Riemannian $H$-type nilmanifold $N_{3,4}(\mathfrak v_{\min}\oplus \mathfrak v_{\min})$ is not a geodesic orbit manifold.
\end{theorem}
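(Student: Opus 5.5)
We would prove Theorem~\ref{th:34++} by exhibiting one explicit initial vector $(z_0,x_0)\in\mathfrak z\oplus(\mathfrak v_{\min}\oplus\mathfrak v_{\min})$ for which the criterion of Proposition~\ref{gonil1n.2} fails, i.e. no $D=(C,A)\in\mathfrak h$ and $k\in\mathbb R$ satisfy $(C+k\Id)(z_0)=0$ and $(A+k\Id)(x_0)=J_{z_0}(x_0)$. To keep $k$ out of the way we take $z_0=Z_1$ (non-null) and $x_0=(x_1,x_2)$ with $x_1,x_2$ in the two copies chosen so that $\langle x_0,x_0\rangle\neq 0$; then $k=0$ by the Remark after Proposition~\ref{pr.geodlem1}.

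\textbf{Step 1: describe $\mathfrak h$.} By the Remark after Proposition~\ref{gonil1n.2}, the operator $A$ lies in the normalizer $\mathbf N$ of $\mathbf V$ in $\mathfrak{so}(\mathfrak v)$. By Proposition~\ref{pr.triple.1}(5), $\mathbf N=[\mathbf V,\mathbf V]\oplus\mathbf Z$. So $A=A_0\oplus A_0+B$, where $A_0=\sum_{i<k}a_{i,k}J_{Z_i}J_{Z_k}$ acts diagonally on both copies, and $B$ is skew and commutes with all $J_{Z_i}$.

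Since the two summands are equivalent irreducible modules with commutant $\mathbb R$ (the product $J_{Z_1}\cdots J_{Z_7}$ is the scalar $-\Id$), $B$ has the form $B(x,y)=(\lambda_{11}x+\lambda_{12}y,\ \lambda_{21}x+\lambda_{22}y)$. Skew-symmetry with respect to $\langle\cdot,\cdot\rangle_{\mathfrak v_{\min}}\oplus\langle\cdot,\cdot\rangle_{\mathfrak v_{\min}}$ forces $\lambda_{11}=\lambda_{22}=0$ and $\lambda_{21}=-\lambda_{12}=:b$. Hence $\mathbf Z$ is one-dimensional, spanned by $R(x,y)=(-y,x)$. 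The operator $C$ is given by \eqref{M expression of C} and does not depend on $b$.

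\textbf{Step 2: write the linear system.} The condition $C(Z_1)=0$ kills $a_{1,2},\dots,a_{1,7}$. The remaining equation splits into the two copies:
\[
A_0(x_1)-b\,x_2=J_{Z_1}x_1,\qquad A_0(x_2)+b\,x_1=J_{Z_1}x_2 .
\]
We take $x_1=v$ and $x_2=J_{Z_j}v$ for a suitably chosen $j$ (first try $j=4$, so $x_0$ is null in $\mathfrak v$ but $\|z_0\|^2=1$ still makes the total vector non-null). Then both equations can be expanded in the basis \eqref{eq:basis34-7} using \eqref{Av*} and \eqref{AJ4v*}; for $j\neq4$, analogous tables are obtained from \eqref{eq:1=2}. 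This gives $16$ scalar equations in the $15$ unknowns $a_{i,k}$ ($2\le i<k\le7$) and $b$.

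\textbf{Step 3: show inconsistency.} The key point is that, by \eqref{Av*} and \eqref{AJ4v*}, the coefficients satisfy $\alpha_0=0$ and $\beta_4=0$ identically. Moreover $\alpha_m,\beta_m$ involve the same combinations $a_{2,3},a_{4,7},a_{5,6},\dots$ with opposite signs in the $\mathfrak z_-$ parts. The aim is to find an explicit linear combination of the $16$ equations whose left-hand side vanishes identically (including the $b$-terms) while its right-hand side is nonzero. For instance, combine the $X_1$-components of the first equation with the $X_5$-components of the second: these pick up $\pm b$ together with $\alpha_0=0$ and $\beta_4=0$, and they pin $b$ to two incompatible values, or force a relation such as $\alpha_1=1$, $\beta_7=\pm1$ that contradicts the shared parameters.

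If the choice $x_2=J_{Z_4}v$ turns out to be solvable, we will scan $x_2=J_{Z_j}v$ for the other $j$, and also mixed vectors $x_2=v+J_{Z_j}v$. In each case we reduce to a small linear system and exhibit the obstruction by rank computation.

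The main obstacle is precisely this choice. The single extra parameter $b$ from $\mathbf Z$ must not compensate the failure. Locating an initial vector, and a corresponding combination of the equations that is annihilated identically, is where the real work lies. Once found, the non-GO property follows directly from Proposition~\ref{gonil1n.2}.
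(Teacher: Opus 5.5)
Your Steps 1 and 2 are a correct reduction and agree with the paper's: $A$ acts diagonally, the off-diagonal blocks are $-b\Id$ and $b\Id$, and $k=0$ because $z_0=Z_1$ is non-null. The gap is Step 3. You never exhibit an initial vector, and the family you propose to scan contains no counterexample. Your first candidate is already solvable: for $x_0=(v,J_{Z_4}v)$ take $b=0$ and $A=J_{Z_4}J_{Z_7}$ on both copies. Then $C(Z_1)=0$, and by \eqref{eq:1=2} we have $J_{Z_4}J_{Z_7}v=J_{Z_1}v$ and $J_{Z_4}J_{Z_7}(J_{Z_4}v)=-J_{Z_7}v=J_{Z_1}J_{Z_4}v$. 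Your suggested combination does not help either. The $X_1$-component of the first equation and the $X_5$-component of the second contain no $b$ at all. The $b$-terms sit in the other two components, which give $\alpha_4=b$ and $\beta_0=-b$ with $\alpha_4=\beta_0=a_{2,6}+a_{3,5}$; this yields only $b=0$.

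This is not bad luck. For $z_0=Z_1$ the admissible operators $A_0=\sum_{2\le i<k\le 7}a_{i,k}J_{Z_i}J_{Z_k}$ commute with $J_{Z_1}$. So $\mathfrak v_{\min}\cong\mathbb C^4$ with $i=J_{Z_1}$ and Hermitian form $h(x,y)=\langle x,y\rangle+i\langle x,J_{Z_1}y\rangle$ of signature $(2,2)$, and these $A_0$ form exactly $\mathfrak{su}(h)\cong\mathfrak{so}(2,4)$. Your two equations say that $A_0$ preserves $W=\spann_{\mathbb C}\{x_1,x_2\}$ and acts there by $\bigl(\begin{smallmatrix} i&-b\\ b&i\end{smallmatrix}\bigr)$. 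Suppose $W$ contains a non-null vector, as it does in all your candidates since $v\in W$. Then $W$ lies in a non-degenerate subspace $U$ with $\dim_{\mathbb C}U\le 3$. Taking $b=0$, $A_0=i$ on $U$ and $A_0=-\tfrac{\dim U}{4-\dim U}\,i$ on $U^\perp$ gives a solution, so every rank computation in your scan comes out consistent. An obstruction appears only when $W$ is a totally isotropic complex $2$-plane. Then $W^\perp=W$, and the induced map on $\mathfrak v_{\min}/W$ has trace $-\overline{\operatorname{tr}(A_0|_W)}$. Hence $\operatorname{tr}A_0=2i\operatorname{Im}\operatorname{tr}(A_0|_W)=4i\neq 0$, which is impossible in $\mathfrak{su}(h)$.

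The missing idea is therefore to take both components null and $h$-orthogonal. For example, take $z_0=Z_1$ and $x_0=(v+J_{Z_4}v,\;J_{Z_2}v+J_{Z_5}v)$. The $X_2$- and $X_8$-components of the first equation give $a_{4,7}=1$, while the $X_4$- and $X_7$-components of the second give $a_{4,7}=-1$. With this choice your route (non-null $z_0$, so $k$ never enters) would be a valid alternative to the paper's proof. The paper instead uses the null vector $z_0=Z_1+Z_4$ with $x_0=v+J_{Z_7}v$, $y_0=-J_{Z_2}v+J_{Z_5}v$, and reaches the same kind of contradiction $a_{4,7}=\pm1$.
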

\begin{proof}
%We can assume that the decomposition $\mathfrak v=\mathfrak v_{\min}\oplus \mathfrak v_{\min}$ is orthogonal.
We will find an initial vector $(z_0,x_0,y_0)$ for a geodesic with $(x_0,y_0)=x_0\oplus y_0\in \mathfrak v_{\min}\oplus \mathfrak v_{\min}$, which can not be extended to an element $D\in\mathfrak h$.

We fix an orthonormal basis $\{Z_1,\ldots,Z_7\}$ for $\mathfrak z=\mathbb R^{3,4}$, choose the null vector $z_0=Z_1+Z_4$ and unit vectors $v=v\oplus 0,u=0\oplus v$, where $v\in \mathfrak v_{\min}$ is the eigenvector of involutions~\eqref{eq:inv34}. Then, we also assume
$$
x_0=v+J_{Z_{7}}(v),\quad
y_0=-J_{Z_{2}}(v)+J_{Z_{5}}(v).
$$
Conditions~\eqref{in nor + +} are written in the form
\begin{equation}\label{eq:A1A2}
\begin{pmatrix}A_1+k\Id&B_{1}\\B_{2}&A_2+k\Id\end{pmatrix}
\begin{pmatrix} x_0\\y_0\end{pmatrix}
=
\begin{pmatrix}J_{z_0}&0\\0&J_{z_0}\end{pmatrix}
\begin{pmatrix}x_0\\ y_0\end{pmatrix},
\end{equation}
\begin{equation*}\label{eq:A1A2-1}
\left[\begin{pmatrix}A_1&B_{1}\\B_{2}&A_2\end{pmatrix}
\begin{pmatrix}J_{w}&0\\0&J_{w}\end{pmatrix}\right]
=\begin{pmatrix} [A_1,J_{w}] & [B_{1},J_{w}] \\ [B_{2},J_{w}] & [A_2,~J_{w}] \end{pmatrix}
=\begin{pmatrix}
J_{C(w)}&0\\
0&J_{C(w)}
\end{pmatrix},
\end{equation*}
for all $w\in\mathbb{R}^{3,4}$. Then $B_{1}=\beta\cdot \Id$ and $B_{2}=-\beta\cdot \Id$ and $A_2=A_1=A$, with $A^{\tau}=-A$ by the skew symmetry of $D$.
Thus, equation~\eqref{eq:A1A2} takes the form

\begin{equation*}
\begin{array}{llll}
&\Big(A+k\Id\Big)(x_0)+\beta y_0=J_{Z_1+Z_4}(x_0),
\\
&-\beta x_0+\Big(A+k\Id\Big)(y_0)=J_{Z_{1}+Z_{4}}(y_0),
\\
&\Big(C+k\Id\Big)(Z_{1}+Z_4)=0,
\\
&[A, J_{w}]=J_{C(w)},\quad\text{for all}\quad w\in\mathfrak z=\mathbb R^{3,4}
\end{array}
\end{equation*}
To show that a geodesic with the initial conditions $(z_0,x_0,y_0)=(Z_1+Z_4,v+J_{Z_7}(v), -J_{Z_{2}}(v)+J_{Z_{5}}(v))$ is homogeneous, we need to find a pair $(C, A)\in \mathfrak h$ and $\beta\in\mathbb{R}$ satisfying the above relations.
The condition $\Big(C+k\Id\Big)(Z_{1}+Z_{4})=0$ leads to
\begin{equation}\label{eq:C14}
a_{1,4}=-k/2, ~a_{1,2}+a_{2,4}=0,~a_{1,3}+a_{3,4}=0,
~a_{1,5}-a_{4,5}=0,~a_{1,6}-a_{4,6}=0,~a_{1,7}-a_{4,7}=0.
\end{equation}
We also find that
$$
J_{Z_1+Z_4}(x_0)=J_{Z_1+Z_4}(v+J_{z_{7}}(v))=2J_{Z_1}(v)+2J_{Z_4}(v),\quad
$$
$$
J_{Z_1+Z_4}(y_0)=J_{Z_1+Z_4}(-J_{Z_{2}}(v)+J_{Z_{5}}(v))=2J_{Z_3}(v)-2J_{Z_6}(v),\quad
$$

We write the values
$$A(v),\quad A\big(J_{Z_7}(v)\big), \quad A\big(J_{Z_2}(v)\big), \quad A\big(J_{Z_5}(v)\big)$$ in basis~\eqref{eq:basis34-7}.
Put $A(v)=\sum_{i<m}\,a_{i,m}J_{Z_{i}}J_{Z_{m}}(v)=\alpha_0 v+\sum_{j=1}^{7}\,\alpha_{j}J_{Z_{j}}(v)$. The coefficients $\alpha_j$ is given in~\eqref{Av*} and we rewrite them by using~\eqref{eq:C14}. Analogously, we calculate
$$
%A(J_{Z_{3}}(v))=\varphi_0 v+\sum_{i=1}^7 \varphi_{i}J_{Z_{i}}(v)
A(J_{Z_{7}}(v))=\pi_0 v+\sum_{i=1}^7 \pi_{i}J_{Z_{i}}(v),\quad
A(J_{Z_{2}}(v))=\omega_0 v+\sum_{i=1}^7 \omega_{i}J_{Z_{i}}(v),
$$
$$
A(J_{Z_{5}}(v))=\delta_0 v+\sum_{i=1}^7 \delta_{i}J_{Z_{i}}(v)
$$
\begin{equation}
\left\{
\begin{array}{l}
\alpha_{0}=0,\\
\alpha_{1}=-a_{2,3}+a_{4,7}+a_{5,6},\\
\alpha_{2}=  a_{1,3}+a_{4,6}-a_{5,7}=-a_{3,4}+a_{4,6}-a_{5,7},\\%=0,\\
\alpha_{3}= -a_{1,2}+a_{4,5}+a_{6,7}=a_{2,4}+a_{4,5}+a_{6,7},\\%=0,\\
\alpha_{4}= a_{1,7}+a_{2,6}+a_{3,5}=a_{2,6}+a_{3,5}+a_{4,7},\\%=0,\\
\alpha_{5}= a_{1,6}-a_{2,7}-a_{3,4}=-a_{2,7}-a_{3,4}+a_{4,6},\\%=0,\\
\alpha_{6}= -a_{1,5}-a_{2,4} + a_{3,7}=-a_{2,4} + a_{3,7}-a_{4,5},\\%=0,\\
\alpha_{7}= -a_{1,4}+ a_{2,5}-a_{3,6}=a_{2,5}-a_{3,6}+k/2.\\%=0.
\end{array}\right.\label{Avv}
\end{equation}

\begin{equation}
\left\{
\begin{array}{l}
\pi_{0}=-a_{1,4}+a_{2,5}-a_{3,6}=a_{2,5}-a_{3,6}+k/2\\
\pi_{1}=a_{1,7}-a_{2,6}-a_{3,5}=-a_{2,6}-a_{3,5}+a_{4,7},\\%=1,\\
\pi_{2}=a_{1,6}+a_{2,7}-a_{3,4}=a_{2,7}-a_{3,4}+a_{4,6},\\%=0,\\
\pi_{3}=a_{1,5}+ a_{2,4}+a_{3,7}=a_{2,4}+a_{3,7}+a_{4,5}\\%=0,\\
\pi_{4}= a_{2,3}+a_{4,7}-a_{5,6},\\%=0,\\,\\%=0,\\
\pi_{5}= a_{1,3}+a_{4,6} + a_{5,7}=-a_{3,4}+a_{4,6} + a_{5,7},\\%=0,\\
\pi_{6}= a_{1,2}-a_{4,5}+a_{6,7}=-a_{2,4}-a_{4,5}+a_{6,7},,\\%=0,\\
\pi_{7}=0 .\\%=0.
\end{array}\right.\label{Avpi}
\end{equation}

\begin{equation}
\left\{
\begin{array}{l}
\omega_{0}=-a_{1,3}-a_{4,6}+a_{5,7}=a_{3,4}-a_{4,6}+a_{5,7},\\
\omega_{1}=-a_{1,2}-a_{4,5}-a_{6,7}=a_{2,4}-a_{4,5}-a_{6,7},,\\%=1,\\
\omega_{2}=0,\\%=0,\\
\omega_{3}=a_{2,3}+a_{4,7}+a_{5,6} ,\\%=0,\\
\omega_{4}=-a_{1,5}+ a_{2,4}+a_{3,7}=a_{2,4}+a_{3,7} -a_{4,5} ,\\%=0,\\
\omega_{5}= a_{1,4}+a_{2,5} + a_{3,6}=a_{2,5} + a_{3,6}-k/2,\\%=0,\\
\omega_{6}= -a_{1,7}+a_{2,6}-a_{3,5}=a_{2,6}-a_{3,5}-a_{4,7},\\%=0,\\
\omega_{7}=a_{1,6}+a_{2,7}-a_{3,4}=a_{2,7}-a_{3,4}+a_{4,6} .\\%=0.
\end{array}\right.\label{Avomega}
\end{equation}

\begin{equation}
\left\{
\begin{array}{l}
\delta_{0}=a_{1,6}-a_{2,7}-a_{3,4}=-a_{2,7}-a_{3,4}+a_{4,6},\\
\delta_{1}=a_{1,5}-a_{2,4}+a_{3,7}=-a_{2,4}+a_{3,7}+a_{4,5},\\
\delta_{2}=a_{1,4}+a_{2,5}+a_{3,6}=a_{2,5}+a_{3,6}-k/2,\\
\delta_{3}=-a_{1,7}-a_{2,6}+a_{3,5}=-a_{2,6}+a_{3,5}-a_{4,7},\\
\delta_{4}=a_{1,2}+a_{4,5}-a_{6,7}=-a_{2,4}+a_{4,5}-a_{6,7},\\
\delta_{5}=0,\\
\delta_{6}=-a_{2,3}+a_{4,7}-a_{5,6},\\
\delta_{7}=-a_{1,3}-a_{4,6}-a_{5,7}=a_{3,4}-a_{4,6}-a_{5,7}.
\end{array}\right.\label{AJ5v*}
\end{equation}

The equation
$$
\Big(A+k\Id\Big)(v+J_{Z_7}(v))+\beta (-J_{Z_2}(v)+J_{Z_5}(v))=J_{Z_1+Z_4}(v+J_{Z_7(v)})=2J_{Z_1}(v)+2J_{Z_4}(v)
$$
leads to the system
\begin{equation*}
\begin{cases}
(0)\quad a_{2,5}-a_{3,6}+k=-k
\\
(1)\quad -a_{2,3}-a_{2,6}-a_{3,5}+2a_{4,7}+a_{5,6}=2
\\
(2)\quad a_{2,7}-2a_{3,4}+2a_{4,6}-a_{5,7}=\beta
\\
(3)\quad 2a_{2,4}+a_{3,7}+2a_{4,5}+a_{6,7}=0
\\
(4)\quad a_{2,3}+a_{2,6}+a_{3,5}+2a_{4,7}-a_{5,6}=2
\\
(5)\quad -a_{2,7}-2a_{3,4}+2a_{4,6}+a_{5,7}=-\beta
\\
(6)\quad -2a_{2,4}+a_{3,7}-2a_{4,5}+a_{6,7}=0
\\
(7)\quad a_{2,5}-a_{3,6}+k=-k,
\end{cases}
\end{equation*}
by using~\eqref{Avv} and~\eqref{Avpi}. Summing equations $(1)$ and $(4)$, we obtain $a_{4,7}=1$.  The equation
$$
\Big(A+k\Id\Big)\big(-J_{Z_2}(v)+J_{Z_5}(v)\big)-\beta \big(v+J_{Z_7}(v)\big)=J_{Z_1+Z_4}\big(-J_{Z_2}(v)+J_{Z_5}(v)\big)=2J_{Z_3}(v)-2J_{Z_6}(v)
$$
leads to the system
\begin{equation*}
\begin{cases}
(0)\quad -a_{2,7}-2a_{3,4}+2a_{4,6}-a_{5,7}=\beta
\\
(1)\quad -2a_{2,4}+a_{3,7}+2a_{4,5}+a_{6,7}=0
\\
(2)\quad a_{2,5}+a_{3,6}-k=k
\\
(3)\quad -a_{2,3}-a_{2,6}+a_{3,5}-2a_{4,7}-a_{5,6}=2
\\
(4)\quad -2a_{2,4}-a_{3,7}+2a_{4,5}-a_{6,7}=0
\\
(5)\quad -a_{2,5}-a_{3,6}+k=-k
\\
(6)\quad -a_{2,3}-a_{2,6}+a_{3,5}+2a_{4,7}-a_{5,6}=-2
\\
(7)\quad -a_{2,7}+2a_{3,4}-2a_{4,6}-a_{5,7}=\beta,
\end{cases}
\end{equation*}
by using~\eqref{Avomega} and~\eqref{AJ5v*}. Subtracting from equation $(3)$ equation $(6)$, we obtain $a_{4,7}=-1$. Thus we can not solve both equations
$$
\Big(A+k\Id\Big)(x_0)+\beta y_0=J_{Z_1+Z_4}(x_0),\qquad -\beta x_0+\Big(A+k\Id\Big)(y_0)=J_{Z_{1}+Z_{4}}(y_0),
$$
simultaneously in order to find a map $D\in\mathfrak h$.
\end{proof}

\begin{remark}
Since the admissible scalar product on $\mathfrak v_{\min}$ is unique only up to a non-zero
factor, $\mathfrak v_{\min}\oplus \mathfrak v_{\min}$ may carry $\ip{\cdot}{\cdot}\oplus\sigma\ip{\cdot}{\cdot}$ with
$\sigma=\pm1$. Skew-symmetry of $D$ then gives $B_1=\beta\Id$ and $B_2=-\sigma\beta\Id$ in~\eqref{eq:A1A2}. The equations used to derive the contradiction are the same for both values of $\sigma$, so
Theorem~\ref{th:34++} holds for both metrics.
\end{remark}

\subsection{Pseudo-Riemannian  $H$-type nilmanifold ${N}_{3,4}(\mathfrak v_{\min}^{-}\oplus \mathfrak v_{\min}^{+})$}\label{sec:34+-}

 Let $\mathfrak v=\mathfrak v_{\min}^{-}\oplus \mathfrak v_{\min}^{+}$ be an admissible Clifford module composed of two non-equivalent modules of $\Cl(\mathbb R^{3,4})$. Let $\Omega=Z_1\cdots Z_7$ be the volume form for $\Cl(\mathbb R^{3,4})$. Then $\Omega^2=\mathbf{ 1}\in \Cl(\mathbb R^{3,4})$, and
$$
J_{\omega}(x)=x\quad\text{for}\quad x\in \mathfrak v_{\min}^{+},\qquad
J_{\omega}(x)=-x\quad\text{for}\quad x\in \mathfrak v_{\min}^{-}.
$$
The admissible Clifford module $\mathfrak v=\mathfrak v_{\min}^-\oplus \mathfrak v_{\min}^{+}$ is isometric to $\mathbb R^{8,8}$.

\begin{theorem}\label{th:34+-}
The pseudo-Riemannian $H$-type nilmanifold $N_{3,4}(\mathfrak v_{\min}^-\oplus \mathfrak v_{\min}^+)$ is not a geodesic orbit manifold.
\end{theorem}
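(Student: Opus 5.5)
The plan is to imitate the proof of Theorem~\ref{th:34++}. I will write a general element $D=(C,\mathcal A)\in\mathfrak h$ in block form with respect to $\mathfrak v=\mathfrak v_{\min}^-\oplus\mathfrak v_{\min}^+$. Then I will show that non-equivalence forces $D$ to be far more rigid than in the equivalent case. Finally I will exhibit one initial vector for which the conditions of Proposition~\ref{gonil1n.2} cannot hold.

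\emph{Step 1: block form of $\mathcal A$.} Write
\[
\mathcal A=\begin{pmatrix}A_1&B_1\\ B_2&A_2\end{pmatrix}.
\]
Since $J_w$ acts diagonally, the condition $[\mathcal A,J_w]=J_{C(w)}$ splits into blocks. The off-diagonal blocks give $B_1J_w=J_wB_1$ and $B_2J_w=J_wB_2$ for all $w\in\mathbb R^{3,4}$. Thus $B_1,B_2$ are $\Cl(\mathbb R^{3,4})$-intertwiners between the non-equivalent irreducible $8$-dimensional modules $\mathfrak v_{\min}^{\pm}$. By Schur's lemma, $B_1=B_2=0$; this replaces the term $\beta\Id$ that appeared in Theorem~\ref{th:34++}.

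The diagonal blocks satisfy $[A_i,J_w]=J_{C(w)}$ with the \emph{same} $C$. By Proposition~\ref{pr.triple.1} (item 5) and the discussion before~\eqref{expression of A}, each $A_i$ lies in $[\mathbf V,\mathbf V]\oplus\mathbf Z$. The commutant of the irreducible module is $\mathbb R\Id$, because $\Cl(\mathbb R^{3,4})\cong M(8,\mathbb R)\oplus M(8,\mathbb R)$, and a skew-symmetric scalar vanishes. Hence the centralizer part is zero, and
\[
A_i=\sum_{j<m}a^{(i)}_{j,m}J_{Z_j}J_{Z_m}.
\]
The map $A\mapsto C$ from $[\mathbf V,\mathbf V]\cong\mathfrak{so}(3,4)$ is injective; formula~\eqref{M expression of C} shows this explicitly. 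Therefore $a^{(1)}_{j,m}=a^{(2)}_{j,m}=:a_{j,m}$.

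\emph{Step 2: identify the two summands.} Realize $\mathfrak v_{\min}^+$ as a copy of $\mathfrak v_{\min}^-$ on which $Z$ acts by $-J_Z$. This is again a Clifford module, and its volume element acts by $(-1)^7J_\Omega=+\Id$, so it is the other module; its admissible scalar product is the same one up to a sign. Under this identification, $J_{Z_j}J_{Z_m}$ is unchanged, so $A_2$ "is" $A_1=A$, while $J_{z_0}$ becomes $-J_{z_0}$. The geodesic conditions of Proposition~\ref{gonil1n.2} for the initial vector $(z_0,x_0,y_0)$ then read
\[
(A+k\Id)x_0=J_{z_0}x_0,\qquad (A+k\Id)y_0=-J_{z_0}y_0,\qquad (C+k\Id)z_0=0 .
\]

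\emph{Step 3: the counterexample.} Take $z_0=Z_1$, and take $x_0=v$ and $y_0=v$ in the two copies, with $v$ as in~\eqref{eq:inv34}. Since $z_0$ is not null, $k=0$, by the remark after Proposition~\ref{pr.geodlem1}. The two equations become $A(v)=J_{Z_1}v$ and $A(v)=-J_{Z_1}v$, which force $J_{Z_1}v=0$. This is impossible because $J_{Z_1}^2=-\Id$. Hence this geodesic is not homogeneous, and $N_{3,4}(\mathfrak v_{\min}^-\oplus\mathfrak v_{\min}^+)$ is not geodesic orbit.

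The main obstacle is Step 1–2: justifying rigorously that $B_1=B_2=0$ and that $A_1$ and $A_2$ are literally the same combination of $J_{Z_j}J_{Z_m}$ after the twisted identification, including the sign bookkeeping for the scalar product on $\mathfrak v_{\min}^+$. If the sign of the metric matters, I would handle it as in the remark after Theorem~\ref{th:34++}; since $B_i=0$ here, the sign should not enter the equations at all.
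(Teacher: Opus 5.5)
Your proof is correct and is essentially the paper's argument. You use the same initial vector $(Z_1, v^-\oplus v^+)$: your twisted copy of $v$ is exactly the vector satisfying \eqref{eq:inv34--}. You kill the off-diagonal blocks by non-equivalence, use the same coefficients $a_{i,k}$ on both blocks, and reach the same sign-flip contradiction $\tilde\alpha_j=-\alpha_j$, which you write coordinate-free as $A(v)=J_{Z_1}v=-J_{Z_1}v$. Along the way you make explicit two points the paper leaves implicit: Schur's lemma forces $\beta=0$, and injectivity of $A\mapsto C$ forces both blocks to share the same $a_{i,k}$. You also observe that $k$ cancels when the two equations are subtracted, so the slightly misattributed justification of $k=0$ is harmless.
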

\begin{proof}
We choose $v^-\in \mathfrak v_{\min}^-$ satisfying~\eqref{eq:inv34} and basis for $\mathfrak v_{\min}^-$ as in~\eqref{eq:basis34-7}. Then we take $v^+\in \mathfrak v_{\min}^+$ satisfying
\begin{equation}\label{eq:inv34--}
\begin{array}{llll}
&J_{Z_1}J_{Z_2}J_{Z_4}J_{Z_5}(v^+)=v^+,\
&J_{Z_1}J_{Z_2}J_{Z_6}J_{Z_7}(v^+)=v^+,\
\\
&J_{Z_1}J_{Z_3}J_{Z_5}J_{Z_7}(v^+)=v^+,\
&J_{Z_1}J_{Z_2}J_{Z_3}(v^+)=-v^+,
\end{array}
\end{equation}
and the basis for $\mathfrak v_{\min}^+$ as in~\eqref{eq:basis34-7}. We take the vector $(z_0,x_0)=(Z_1,v^-\oplus v^+)$ being an initial vector of a geodesic and show, that there is no $D\in\mathfrak h$ satisfying~\eqref{in nor + +}. As in Section~\ref{sec:34--} we need to solve the system

\begin{equation*}
\begin{pmatrix}A+k\Id&\beta\Id\\-\beta\Id&A+k\Id\end{pmatrix}
\begin{pmatrix} v^-\\v^+\end{pmatrix}
=
\begin{pmatrix}
J_{Z_1}v^-\\J_{Z_1}v^+
\end{pmatrix},
\end{equation*}
or
\begin{equation}\label{eq:A1A2-1}
\begin{array}{llll}
&\Big(A+k\Id\Big)(v^-)+\beta v^+=J_{Z_1}(v^-),
\\
&-\beta v^-+\Big(A+k\Id\Big)(v^+)=J_{Z_{1}}(v^+),
\\
&\Big(C+k\Id\Big)(Z_{1})=0,
\end{array}
\end{equation}
The last equation implies that $k=0$. We see also that $\beta=0$ since the modules are not equivalent.
To calculate
$$A(v^+)=\sum_{i<k}\,a_{i,k}J_{Z_{i}}J_{Z_{k}}(v^+)=\tilde\alpha_0 v^++\sum_{j=1}^{7}\,\tilde\alpha_{j}J_{Z_{j}}(v^+),
$$
we need to use instead of~\eqref{eq:1=2}, the following conditions implied by~\eqref{eq:inv34--}
\begin{equation}\label{eq:1=2--}
    \begin{array}{llllllllll}
J_{Z_1}(v^+)&=&J_{Z_2}J_{Z_3}(v^+)&=&-J_{Z_4}J_{Z_7}(v^+)&=&-J_{Z_5}J_{Z_6}(v^+),\quad
\\
J_{Z_2}(v^+)&=&-J_{Z_1}J_{Z_3}(v^+)&=&-J_{Z_4}J_{Z_6}(v^+)&=&J_{Z_5}J_{Z_7}(v^+),\quad
\\
J_{Z_3}(v^+)&=&J_{Z_1}J_{Z_2}(v^+)&=&-J_{Z_4}J_{Z_5}(v^+)&=&-J_{Z_6}J_{Z_7}(v^+),
\\
J_{Z_4}(v^+)&=&-J_{Z_1}J_{Z_7}(v^+)&=&-J_{Z_2}J_{Z_6}(v^+)&=&-J_{Z_3}J_{Z_5}(v^+),
\\
J_{Z_5}(v^+)&=&J_{Z_2}J_{Z_7}(v^+)&=&J_{Z_3}J_{Z_4}(v^+)&=&-J_{Z_1}J_{Z_6}(v^+),\quad
\\
J_{Z_6}(v^+)&=&J_{Z_1}J_{Z_5}(v^+)&=&J_{Z_2}J_{Z_4}(v^+)&=&-J_{Z_3}J_{Z_7}(v^+),
\\
J_{Z_7}(v^+)&=&J_{Z_1}J_{Z_4}(v^+)&=&-J_{Z_2}J_{Z_5}(v^+)&=&J_{Z_3}J_{Z_6}(v^+).
\end{array}
\end{equation}
Thus the solution to~\eqref{eq:A1A2-1} is equivalent to the solution of the system
\begin{equation}
\begin{cases}
\alpha_{0}=-\beta,\\
\alpha_{1}=-a_{2,3}+a_{4,7}+a_{5,6}=1\\
\alpha_{2}=  a_{1,3}+a_{4,6}-a_{5,7}=0\\
\alpha_{3}= -a_{1,2}+a_{4,5}+a_{6,7}=0\\
\alpha_{4}= a_{1,7}+a_{2,6}+a_{3,5}=0\\
\alpha_{5}= a_{1,6}-a_{2,7}-a_{3,4}=0\\
\alpha_{6}= -a_{1,5}-a_{2,4} + a_{3,7}=0\\
\alpha_{7}= -a_{1,4}+ a_{2,5}-a_{3,6}=0
\end{cases}\qquad
\begin{cases}
\tilde\alpha_{0}=\beta,\\
\tilde\alpha_{1}=a_{2,3}-a_{4,7}-a_{5,6}=1\\
\tilde\alpha_{2}=-  a_{1,3}-a_{4,6}+a_{5,7}=0\\
\tilde\alpha_{3}= +a_{1,2}-a_{4,5}-a_{6,7}=0\\
\tilde\alpha_{4}= -a_{1,7}-a_{2,6}-a_{3,5}=0\\
\tilde\alpha_{5}= -a_{1,6}+a_{2,7}+a_{3,4}=0\\
\tilde\alpha_{6}= +a_{1,5}+a_{2,4} - a_{3,7}=0\\
\tilde\alpha_{7}= a_{1,4}-a_{2,5}+a_{3,6}=0.
\end{cases}
\end{equation}
The system has no solution since $\alpha_1+\tilde\alpha_1=0$ and in the same time $\alpha_1+\tilde\alpha_1=2$, which is a contradiction.
\end{proof}
\begin{remark}
Note that the same result we get for any choice of initial vector, where $x_0=(u,u)$ have equal coordinates in the bases of $\mathfrak v_{\min}^-$ and $\mathfrak v_{\min}^+$. This is due to the fact that action of $A$ on $\mathfrak v_{\min}^-$ and $\mathfrak v_{\min}^+$ will have opposite sign due to~\eqref{eq:1=2} and~\eqref{eq:1=2--}.
\end{remark}

\subsection{Pseudo-Riemannian  $H$-type nilmanifold ${N}_{3,4}\Big(\big(\bigoplus^p\mathfrak v_{\min}^{+}\big)\oplus\big(\bigoplus^q \mathfrak v_{\min}^{-}\big)\Big)$}\label{sec:34+-old}

We denote by ${N}_{3,4}\Big(\big(\bigoplus^p\mathfrak v_{\min}^{+}\big)\oplus\big(\bigoplus^q \mathfrak v_{\min}^{-}\big)\Big)$ the pseudo $H$-type nilmanifold,
built over an arbitrary number of non-equivalent irreducible Clifford modules. Using Proposition~\ref{co_totg_2}, Theorem~\ref{th:34++}, and  Theorem~\ref{th:34+-}
one immediately obtains the proof of the following statement.
\begin{theorem}\label{th:34general}
The pseudo-Riemannian $H$-type nilmanifold ${N}_{3,4}\Big(\big(\bigoplus^p\mathfrak v_{\min}^{+}\big)\oplus\big(\bigoplus^q \mathfrak v_{\min}^{-}\big)\Big)$,
where $p+q >1$,
is not a geodesic orbit manifold.
\end{theorem}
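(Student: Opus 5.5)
The plan is to reduce to Theorem~\ref{th:34++} and Theorem~\ref{th:34+-} by means of Proposition~\ref{co_totg_2}. The point is that a GO structure on the big nilmanifold would force the GO property on a totally geodesic subgroup built from a two-summand submodule.

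Write $\mathfrak v=\big(\bigoplus^p\mathfrak v_{\min}^{+}\big)\oplus\big(\bigoplus^q\mathfrak v_{\min}^{-}\big)$ as an orthogonal sum of minimal admissible modules. Since $p+q>1$, we can pick two summands, and there are two cases.

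\emph{Case 1: $p\ge2$ or $q\ge 2$.} Then $\mathfrak v$ contains $\mathfrak v'=\mathfrak v_{\min}^{\pm}\oplus\mathfrak v_{\min}^{\pm}$, a sum of two equivalent minimal modules.

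\emph{Case 2: $p=q=1$.} Then $\mathfrak v$ itself is $\mathfrak v_{\min}^{-}\oplus\mathfrak v_{\min}^{+}$.

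In either case $\mathfrak v'$ is an orthogonal direct sum of $\mathrm{Cl}(\mathbb R^{3,4})$-invariant non-degenerate subspaces. Hence the restricted scalar product is admissible and $\mathfrak v'$ is an admissible submodule in the sense of Proposition~\ref{co_totg_2}.

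Next, suppose $N_{3,4}(\mathfrak v)$ were GO. Proposition~\ref{co_totg_2} would then make $N_{3,4}(\mathfrak v')$ GO. In Case 2 this contradicts Theorem~\ref{th:34+-} directly.

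In Case 1 we need Theorem~\ref{th:34++}, which was proved for $\mathfrak v_{\min}=\mathfrak v^-_{\min}$, i.e.\ with $v$ satisfying \eqref{eq:inv34}. If the two equivalent summands are copies of $\mathfrak v^+_{\min}$ instead, I would invoke the isometric isomorphism of the two $15$-dimensional algebras cited from \cite[Theorem 12]{FuMa17}. Concretely, changing $Z_7\mapsto -Z_7$ (an isometry of $\mathbb R^{3,4}$) swaps the sign of the volume element $J_{Z_1}\cdots J_{Z_7}$. This interchanges $\mathfrak v^+_{\min}$ and $\mathfrak v^-_{\min}$ and identifies $N_{3,4}(\mathfrak v^+_{\min}\oplus\mathfrak v^+_{\min})$ isometrically with $N_{3,4}(\mathfrak v^-_{\min}\oplus\mathfrak v^-_{\min})$.

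Alternatively, one can observe that the computation in Theorem~\ref{th:34++} only uses the relations \eqref{eq:1=2}, and the analogous relations \eqref{eq:1=2--} give the same contradiction up to signs. The remark following Theorem~\ref{th:34++} also covers both sign choices $\sigma=\pm1$ of the metric on the second summand, so the admissible metric on $\mathfrak v'$ is handled in all cases.

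The only step needing care is this last identification in Case 1. If the $Z_7\mapsto -Z_7$ trick is questioned, I would fall back on rerunning the short contradiction from the proof of Theorem~\ref{th:34++} with $v^+$ in place of $v$: summing or subtracting the relevant equations still forces two incompatible values of $a_{4,7}$. Everything else is a formal combination of Proposition~\ref{co_totg_2} with the two preceding theorems.
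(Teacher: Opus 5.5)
Your proposal is correct and follows the paper's argument. The paper obtains Theorem~\ref{th:34general} immediately from Proposition~\ref{co_totg_2} together with Theorems~\ref{th:34++} and~\ref{th:34+-}, by passing to the totally geodesic subgroup over a two-summand admissible submodule, exactly as you do. Your additional identification of $N_{3,4}(\mathfrak v^+_{\min}\oplus\mathfrak v^+_{\min})$ with $N_{3,4}(\mathfrak v^-_{\min}\oplus\mathfrak v^-_{\min})$ is valid: one replaces $J_Z$ by $J_{\varphi(Z)}$ for the reflection $\varphi\colon Z_7\mapsto -Z_7$, and then $\varphi^{-1}\oplus\Id$ is an isometric isomorphism. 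This step makes explicit a point the paper leaves implicit, since Theorem~\ref{th:34++} is computed only for $\mathfrak v^-_{\min}$.
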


\section{Acknowledgment}
The first author was partly supported by JSPS Kakenhi No. 24K06784 and
MEXT Promotion of Distinctive Joint Research Center Program JPMXP0723833165 and Osaka Metropolitan University Strategic Research Promotion Project (Development of International Research Hubs). The second author was partially supported by the Norwegian Research Council through the project "MatNorVeg: Algebra-Geometry-Analysis with applications \#360114".

\vspace{10mm}

\bibliographystyle{amsunsrt}

\end{document}